\newcommand{\compresslist}{
	\setlength{\itemsep}{1.5pt}
	\setlength{\parskip}{0pt}
	\setlength{\parsep}{0pt}
}
\numberwithin{equation}{section}
\theoremstyle{plain}
\theoremstyle{plain}
\newtheorem{theorem}{Theorem}[section]
\newtheorem{proposition}[theorem]{Proposition}
\newtheorem{lemma}[theorem]{Lemma}
\theoremstyle{definition}
\theoremstyle{remark}
\newtheorem{remark}[theorem]{Remark}
\theoremstyle{definition}
\theoremstyle{remark}
\mathchardef\emptyset="001F
\newcommand{\R}{\mathbb{R}}
\newcommand{\B}{\mathrm{B}}
\newcommand{\dive}{\mathrm{div}}
\newcommand{\spt}{\mathrm{spt}}
\newcommand{\id}{\mathit{id}}
\newcommand{\T}{\mathcal{T}}
\newcommand{\di}{\mathrm{d}}
\newcommand{\F}{\mathcal{F}}
\newcommand{\J}{\mathcal{J}}
\newcommand{\Lip}{\mathrm{Lip}}
\newcommand{\argmin}{\mathrm{argmin}}
\newcommand{\spann}{\mathrm{span}}
\newcommand{\HS}{\mathrm{HS}}
\newcommand{\He}{\mathrm{H}}
\newcommand{\Q}{\mathrm{Q}}
\newcommand{\Qq}{\mathcal{Q}}
\newcommand{\dist}{\mathrm{dist}}
\newcommand{\sfd}{\mathsf{d}}
\newcommand{\cP}{\mathcal{P}}
\newcommand{\BL}{\mathrm{BL}}
\definecolor{dred}{rgb}{.8,0,0}
\definecolor{ddmagenta}{rgb}{0.7,0,0.9}
\definecolor{ddcyan}{rgb}{0,0.2,1.0}
\definecolor{Orchid}{rgb}{0.7,0.4,0}
\newcommand{\N}{\mathbb{N}}
\newcommand{\coloneq }{\hspace{1pt}\raisebox{0.74pt}{\scalebox{0.8}{:}}\hspace{-2.2pt}=}
\newcommand{\Pp}{\mathcal{P}}
\title[An alternate Lagrangian scheme for inhomogeneous games]{An alternate Lagrangian scheme for spatially inhomogeneous evolutionary games}
\author[S. Almi]{Stefano Almi}
\address[Stefano Almi]{Faculty of Mathematics, University of Vienna, 
Oskar-Morgenstern-Platz 1, 1090 Wien, Austria.}
\email{stefano.almi@univie.ac.at}
\author[M. Morandotti]{Marco Morandotti}
\address[Marco Morandotti]{Dipartimento di Scienze Matematiche ``G.~L.~Lagrange'',
Politecnico di Torino, Corso Duca degli Abruzzi, 24,
10129 Torino, Italy.}
\email{marco.morandotti@polito.it}
\author[F. Solombrino]{Francesco Solombrino}
\address[Francesco Solombrino]{Dipartimento di Matematica e Applicazioni ``R.~Caccioppoli'',
Universit\`a di Napoli Federico II, via Cintia, 80126 Napoli, Italy.}
\email{francesco.solombrino@unina.it}
\date{\today}
\keywords{Alternate Lagrangian scheme, inhomogeneous evolutionary games, replicator equation, reversible Markov chains, minimizing movements scheme.}
\begin{document}
\subjclass[2010]{35Q91, 
(60J75, 
37C10, 
47J35,  
58D25)}

\begin{abstract}
An alternate Lagrangian scheme at discrete times is proposed for the approximation of a nonlinear continuity equation arising as a mean-field limit of spatially inhomogeneous evolutionary games, describing the evolution of a system of spatially distributed agents with strategies, or labels, whose payoff depends also on the current position of the agents. The scheme is Lagrangian, as it traces the evolution of position and labels along characteristics and is alternate, as it consists of the following two steps: first the distribution of strategies or labels is updated according to a best performance criterion and then this is used by the agents to evolve their position. A general convergence result is provided in the space of probability measures. In the special cases of replicator-type systems and reversible Markov chains, variants of the scheme, where the explicit step in the evolution of the labels is replaced by an implicit one, are also considered and convergence results are provided.
\end{abstract}

\maketitle

\tableofcontents

\section{Introduction}
The capability of changing strategy as an adaptive response to the modification of the surrounding environment in order to maximize a certain payoff is of paramount importance in decision-making processes. 
Replicator-type models \cite{MR1635735} are a particular class of dynamical models that feature this adaptivity and are well suited for studying the evolution of strategies according to their success: given a pool of strategies, the occurrence of each of them evolves according to their performance with respect to all the others; in this way, if a strategy gives a payoff which is higher compared to the average of all strategies, it is enhanced, otherwise it is suppressed. 
This criterion, in the basic replicator model, is the only one that determines the evolution of the occurrence of the strategies, which in fact is independent from all other factors, in particular from the position of the agents that play those strategies. This is a reasonable assumption, not even a restrictive one, in many cases. For example, in a financial scenario, the set of (pure) strategies~$U$ contains the financial products available to an investor. Any combination of them, that is a portfolio, is called a mixed strategy: in a discrete setting such as this one, it corresponds to the fraction of the capital invested in each of the different financial products. Adapting the strategy means to allocate resources differently according to the evolution of the market, and the location the investor is at when making this decision is likely to not affect the reward of the portfolio. On the contrary, when the position influences the outcome, the system is more involved, as more feedback is available, and the adaptive optimization process relies on the mutual influence of position and strategy performance. We call such a system \emph{spatially inhomogeneous}, and make them the focus of this paper.

\smallskip

\paragraph{\textbf{Overview of the problem and state of the art.}}
The basic, spatially homogeneous, replicator equation of \cite{MR1635735} can be enriched to include spatial dependence of the payoff function: the idea is that the same strategy adopted in two different places might originate different rewards, precisely depending on the environment. Therefore, in order to maximize the payoff players can not only adapt their strategies, but also change their position seeking for the highest possible payoff. Spatially inhomogeneous evolutionary games, introduced in \cite{AmbForMorSav18}, provide a general mathematical framework for the evolution of a distribution of players with their (distributions of) strategies: a space-dependent replicator equation governs the evolution of the distribution~$\lambda\in\Pp(U)$ of the strategies~$u\in U$ while the evolution of the spatial variable~$x\in\R^{d}$ is determined by~$\lambda$.

In the subsequent contribution \cite{MorSol19}, this approach has been suitably extended as an abstract toolbox which is capable of rigorously describing the mean-field limit of a larger class of models which share the following features:
\begin{itemize}\compresslist
\item a multi-agent dynamics in which every agent is characterized by a label~$u\in U$ (accounting for different strategies or different populations to which each individual belongs);
\item exchange rates among the labels which are stochastic in nature and, therefore, are described by the evolution of a probability measure~$\lambda\in\Pp(U)$.
\end{itemize}
Several other models, besides the replicator dynamics mentioned above, are included in this class. 
The multi-label setting can be effectively used to describe situations in which the action of every individual is weighted differently according to the species it belongs to \cite{albi2014boltzmann,albi2017opinion,cirant2015multi,francesco2013measure,fornasier2014mean}. 
In the theory of mean-field games or in optimal control theory, labelling is used to distinguish informed agents in the evacuations of unknown environments, to highlight the influence of key investors in the stock market or of strong leaders in opinion formation \cite{bongini2016optimal,Burger,BMPW2009,Toscani2006}. 
The addition of source and sink terms in the spirit of \cite{Rossi-review} and of label switching \cite{thai} can be successfully dealt with in this class of models.
Relevant applications where label switching may occur come, for instance, from chemical reaction networks, where a particle may change its type as a result of the interaction with the others \cite{LLN2019,CRN,O1989}; also in social dynamics, loss or gain of opinion leadership over time is a natural postulate, as it happens in \cite[Section~3.b]{BMPW2009}.

The framework proposed in \cite{MorSol19} couples a nonlinear transport dynamics for the positions~$x\in\R^{d}$ of the agents with a Markov-type jump process for the labels~$\lambda\in\Pp(U)$ (see Section~\ref{s:hp}). The mean-field limit of the model was proved to be a nonlinear continuity equation of the form
\begin{equation}\label{NCE}
\partial_t\Psi_t+\dive(b_{\Psi_t}\Psi_t)=0 \,.
\end{equation}
in the space of probability measures over the pairs~$(x,\lambda)\in\R^d\times\Pp(U)$ driven by a velocity field~$b_{\Psi}(x,\lambda)$ depending on the global state of the system~$\Psi\in\Pp(\R^d\times\Pp(U))$.
These equations 
are part of a general class which is of great interest in the mathematical community \cite[Chapter~8]{AmbGigSav08} and can be studied both with a Lagrangian or a Eulerian approach.
On the one hand, the nonlinear continuity equation expresses the Eulerian point of view tracing the evolution of the global state~$\Psi$.
On the other hand, a notion of solution can also be provided by the Lagrangian point of view tracing the characteristics, which are, in our case, solutions to an ODE in a suitably constructed Banach space.

Given an initial datum~$\widehat\Psi$, a solution~$t\mapsto\Psi_t$ of the initial value problem for the nonlinear continuity equation is called a Eulerian solution, whereas a curve~$t\mapsto\Psi_t$ obtained via the push-forward of~$\widehat\Psi$ through the flow map 
associated with the ODE 
\begin{equation}\label{ODE}
(\dot x,\dot\lambda)=b_{\Psi_t}(x,\lambda)
\end{equation}
is called a Lagrangian solution.
Since Lagrangian solutions are also Eulerian solutions, the equivalence of the two notions follows if one is able to prove that Eulerian solutions are also Lagrangian. For the model studied in~\cite{MorSol19}, and also for other relevant ones~\cite{CCR2011}, these two notions of solution are equivalent. This has been achieved by means of the \emph{superposition principle} (see~\cite{Smirnov93}, and also \cite[Theorem~8.2.1]{AmbGigSav08}, \cite[Theorem~7.1]{AmbTre14}, and \cite[Theorem~5.2]{AmbForMorSav18}), which provides the uniqueness of Eulerian solutions \cite[Theorem~5.3]{AmbForMorSav18}.
Furthermore, the Lagrangian formulation has been used to propose discretization schemes to solve the nonlinear PDE numerically~\cite{CMW,lagr-cm,LT17,MP19,Pla19}. 

Moreover, the Lagrangian point of view has been used in \cite{AmbForMorSav18} to provide a heuristic derivation of the nonlinear continuity equation arising as the mean-field limit of the spatially inhomogeneous replicator dynamics.
Let us briefly discuss this derivation. Denoting by~$h=T/N$ the time step, if an agent at time~$t=ih$, for~$i\in\{0,\ldots,N-1\}$, is in the
position~$x$ with mixed strategy~$\lambda$, first they optimize the strategy distribution following a homogeneous replicator dynamics of the form
\begin{equation}\label{e:lambdaprimo}
\lambda'\coloneqq \lambda+h\T_{\Psi_{t}}(x,\lambda) \,.
\end{equation}
Here,~$\T_{\Psi_t}(x,\lambda)$ is the payoff operator determining the enhancement or suppression of the strategies; it depends on the random state~$(x,\lambda)$  and also on the current distribution~$\Psi_t$. In the setting of \cite{AmbForMorSav18}, the operator~$\T$ is quadratic in~$\lambda$.
After updating the strategy portfolio, the agent updates its position $x$ to 
\begin{equation}\label{e:xprimo}
x'\coloneqq x+h v(x,u) \,,
\end{equation}
choosing~$u$ with probability~$\lambda'$. 
The two equations above completely determine the conditional probability of having an agent in a state~$(x',\lambda')$ at time~$t+h$ given the distribution~$\Psi_t$. 
Equivalently, the new distribution~$\Psi_{t+h}$ can be defined via duality by
\[\begin{split}
\int_{\R^d\times\Pp(U)} & \phi(x',\lambda')\,\di\Psi_{t+h}(x',\lambda')=\! \int_{\R^d\times\Pp(U)} \!\! \bigg(\int_U \phi(x+hv(x,u),\lambda+h\T_{\Psi_t}(x,\lambda))\,\di\lambda'(u) \bigg)\di\Psi_t(x,\lambda) 
\end{split}\]
where~$\phi\colon \R^d\times\Pp(U)\to\R$ is of class~$C^1$.
By a formal first-order Taylor expansion, we have
$$\int_{\R^d\times\Pp(U)}  \phi(x',\lambda')\,\di\Psi_{t+h}(x',\lambda')=  \int_{\R^d\times\Pp(U)} \big[\phi(x,\lambda)+h \nabla\phi(x,\lambda)\cdot b_{\Psi_t}(x,\lambda)\big]\,\di\Psi_t(x,\lambda)+o(h),$$
where 
$$b_{\Psi_t}(x,\lambda)=\left(\begin{array}{cc}
\displaystyle \int_U v(x,u)\,\di\lambda(u) \\ [1mm]
\T_{\Psi} (x,\lambda)
\end{array}\right).$$
In the formal limit for~$h\to0$, we obtain the weak formulation of the nonlinear continuity equation \eqref{NCE}.
A related heuristic derivation has been outlined also in \cite[Remark~4.1]{ABRS}, in the context of a leader-follower dynamics which also fits in the setting of \cite{MorSol19}. In this case, the~$\R^d$-component of~$b_\Psi$ also depends on~$\Psi$, whereas the~$\lambda$-component acts linearly on~$\lambda$, modelling a Markov chain on~$U$.

\smallskip

\paragraph{\textbf{Results of this paper.}}
The main objective of this paper is to present a rigorous proof of the formal derivation described above, by means of an alternate Lagrangian scheme.
The scheme we propose is suitable for approximating all equations in the class considered in \cite{MorSol19} (we refer to Section~\ref{s:hp} for the precise details).
The method is a \emph{Lagrangian} one as it is based on the approximation of the ODE \eqref{ODE}, and it is \emph{alternate} because the updates of~$x$ and~$\lambda$ do not happen simultaneously, but follow the heuristics described above. Indeed, first we make an incremental step in~$\lambda$ and then use the updated~$\lambda'$ to make the incremental step in~$x$.

Since the velocity field~$b$ depends explicitly on~$\Psi$, at each incremental step the updates of~$x$,~$\lambda$, and of the distribution~$\Psi$ involve three substeps, which are the rigorous formalization of the heuristics discussed above. To be precise,
\begin{itemize}\compresslist
\item first we update~$\lambda$ to~$\lambda'$ in the spirit of~\eqref{e:lambdaprimo} (see~\eqref{e:ODE1});
\item then we transport~$\lambda'$ to the state of the system~$\widetilde \Psi$ (see \eqref{e:tilde_Psi}). This amounts to assuming that all the agents know the optimal label distribution~$\lambda'$ of the other agents;
\item then we update the positions~$x$ to~$x'$ in the spirit of~\eqref{e:xprimo} where the velocity field depends on~$\widetilde\Psi$ (see~\eqref{e:ODE2}). Notice that, in our general framework, the velocity field depends on~$\Psi$ and this makes the previous step necessary;
\item finally, we update the global distribution to~$\Psi'$ keeping both~$x'$ and~$\lambda'$ into account (see~\eqref{e:Psi}).
\end{itemize}
Our first main result is Theorem~\ref{t:limit} on the convergence of the scheme presented above.

In Sections~\ref{s:alternative} and~\ref{s:markov}, we turn our attention to the case of the inhomogeneous replicator dynamics considered in \cite{AmbForMorSav18} and to the leader-follower-type dynamics of \cite[Section~5.1]{MorSol19}, respectively. 
More in general, for the second case, we assume that~$\T_\Psi(x,\lambda)$ is a Markov chain on a finite space of an arbitrary number~$n$ of labels.

In the spatially homogeneous case, that is, when there is no~$x$ dependence in the vector field~$b$, in both situations the evolutions of the~$\lambda$-components are gradient flows of suitable energies with respect to certain metric structures, and the solution can be approximated via a minimizing movement scheme~\cite{AmbGigSav08,JKO}. 
The spatially homogeneous replicator equation is a gradient flow with respect to the spherical Hellinger distance~\eqref{e:arccos} of probability measures (this could be obtained, for instance, for a proper choice of~$f$ in \cite[formula~(1.8)]{KonVor19}). 
The spatially homogeneous Markov-type jump processes are the gradient flow of an entropy-like energy penalized by a distance induced by the transition matrix~\cite{Maas,Mielke}.

We investigate the compliance of these structures with our scheme. More precisely, the explicit step~\eqref{e:lambdaprimo} is replaced by an implicit one, which is a minimizing movement step suggested by the aforementioned gradient flow structure (see~\eqref{e:HS_step} and~\eqref{e:Markov_step}, respectively).
A relevant difficulty in the spatially inhomogeneous setting is that the energy and the dissipation distances that we consider may as well depend on the state~$\Psi$, which changes from step to step.
This extension is far from trivial and requires a careful analysis of the related Euler conditions, which is partially inspired by \cite[Section~4.2]{GalMon17} for the case of the replicator dynamics.
This is done is Propositions~\ref{p:HS_euler} and~\ref{p:Markov_euler}, respectively, where we show that the deviation from the explicit scheme is uniformly controlled by the vanishing time step.

The two main results of Sections~\ref{s:alternative} and~\ref{s:markov} are given by Theorems~\ref{t:limit2} and~\ref{t:Markov}, proving the convergence of our alternate Lagrangian scheme to the unique solution to~\eqref{NCE}. In particular, Theorem~\ref{t:limit2} is a global-in-time convergence result for the spatially inhomogeneous replicator dynamics, whereas Theorem~\ref{t:Markov} provides a short-time existence result for a well-prepared initial datum for spatially inhomogeneous Markov-type jump processes.

\smallskip

The paper is structured as follows: in Section~\ref{s:hp} we introduce the structural assumptions on the systems that we consider. In Section~\ref{s:scheme} we describe the alternate Lagrangian scheme, which we apply to the inhomogeneous replicator dynamics in Section~\ref{s:alternative} and to the inhomogeneous Markov-type jump processes in Section~\ref{s:markov}.

\section{The mathematical setting}\label{s:hp}

\paragraph{\textbf{Basic notation.}} 
Given a metric space~$(X,\sfd_X)$, we denote by~$\mathcal{M}(X)$ the space of signed Borel measures~$\mu$ in~$X$ with finite total variation~$\| \mu \|_{\mathrm{TV}}$,
by~$\mathcal{M}_+(X)$ and~$\cP(X)$ the convex subsets of nonnegative measures and probability measures, respectively. 
We say that~$\mu \in \cP_c(X)$ if~$\mu \in \Pp(X)$ and the support~$\spt \, \mu$ of~$\mu$ is a compact subset of~$X$. Moreover, for~$K \subseteq X$ we will use the notation~$\Pp(K)$ to indicate the set of measures~$\mu \in \Pp(X)$ such that~$\spt\, \mu \subseteq K$.

As usual, if~$(Z, \sfd_{Z})$ is another metric space, for every~$\mu\in\mathcal{M}_+(X)$ and every~$\mu$-measurable function~$f\colon X\to Z$, we define the push-forward measure~$f_\#\mu\in\mathcal{M}_+(Z)$ by~$(f_\#\mu)(B)\coloneqq \mu(f^{-1}(B))$ for any Borel set~$B\subset Z$.
The push-forward measures has the same total mass as~$\mu$, namely~$\mu(X)=(f_\#)\mu(Z)$.



For a Lipschitz function~$f\colon X\to\mathbb{R}$ we set
\begin{equation*}
\mathrm{Lip}(f)\coloneqq\sup_{x,y\in X \atop x\neq y}\frac{|f(x)-f(y)|}{\sfd_X(x,y)}
\end{equation*}
its Lipschitz constant. We denote by~$\mathrm{Lip}(X)$ and~$\mathrm{Lip}_b(X)$  the spaces of Lipschitz and bounded Lipschitz functions on~$X$, respectively. Both are normed spaces with the norm~$\lVert f\rVert_{\mathrm{Lip}} \coloneqq \lVert f\rVert_\infty+ \mathrm{Lip}(f)$, where~$\lVert\cdot\rVert_\infty$ is the supremum norm. Furthermore, we use the notation~$\mathrm{Lip}_{1}(X)$ for the set of functions~$f \in \mathrm{Lip}_{b} (X)$ such that~$\mathrm{Lip}(f) \leq 1$.

In a complete and separable metric space~$(X,\sfd_X)$,  we shall use the Kantorovich-Rubinstein distance~$W_1$ in the class~$\cP(X)$, defined as
\begin{equation*}
W_1(\mu,\nu)\coloneqq\sup\bigg\{\int_X\varphi\,\mathrm{d}\mu-\int_X\varphi\,\mathrm{d}\nu: \varphi\in\mathrm{Lip}_1(X) \bigg\} \,.
\end{equation*}
Notice that~$W_1(\mu,\nu)$ is finite if~$\mu$ and~$\nu$ belong to the space
\begin{equation*}
\cP_1(X)\coloneqq \bigg\{\mu\in\cP(X):  \text{$\int_X \sfd_X(x,\bar x)\,\mathrm{d}\mu(x)<+\infty$ for some $\bar x\in X$}\bigg\}
\end{equation*}
and that~$(\cP_1(X),W_1)$ is complete if~$(X,\sfd_X)$ is complete.

If~$(E, \| \cdot\|_{E})$ is a Banach space and~$\mu \in\mathcal{M}_+(E)$, we define the first moment~$m_1(\mu)$ as
\begin{equation*}
m_1(\mu)\coloneqq\int_{E} \lVert x \rVert_E\,\mathrm{d}\mu\,.
\end{equation*}
Notice that, for a probability measure~$\mu$, finiteness of the integral above is equivalent to~$\mu \in \cP_1(E)$, whenever~$E$ is endowed with the distance induced by the norm~$\lVert\cdot  \rVert_{E}$.

For a Banach space~$E$, the notation~$C^1_b(E)$ will be used to denote the subspace of~$C_b(E)$ of functions having bounded continuous Fr\'echet differential at each point. The notation~$\nabla\phi(\cdot)$ will be used to denote the Fr\'echet differential. In the case of a function~$\phi \colon [0,T]\times E \to \mathbb{R}$, the symbol~$\partial_t$ will be used to denote partial differentiation with respect to~$t$. The symbol $\langle \cdot, \cdot \rangle$ will be used to denote duality products, with no  further specification if the meaning is clear from the context.
\smallskip

\paragraph{\textbf{Functional setting.}} We consider a set of pure strategies~$U$, where~$U$ is a compact metric space, and we denote by~$Y\coloneqq \R^{d} \times \Pp(U)$ the state-space of the system. Precisely, for every $y= (x, \lambda) \in Y$, the component $x\in\R^{d}$ describes the location of an agent in space, whereas the component $\lambda\in\cP(U)$ describes the distribution of labels of the agent.

The correct functional space for the dynamics (see also~\cite{AmbForMorSav18, MorSol19}) is the space $\overline{Y}\coloneqq \R^{d} \times \F(U)$, where we have set (see, e.g.,~\cite{AP,MR81458} and \cite[Chapter~3]{MR3792558})
\begin{equation}\label{P1}
\F(U) \coloneqq \overline{ \spann ( \Pp (U) ) }^{\|\cdot\|_{\mathrm{BL}}} \subseteq (\mathrm{Lip}(U))'.
\end{equation}
The closure in~\eqref{P1} is taken with respect to the \emph{bounded Lipschitz} norm $\lVert\cdot\rVert_{\BL}$, defined as
\begin{equation*}
\lVert \mu \rVert_{\BL}\coloneqq\sup \big\{\langle \mu,\varphi\rangle:  \varphi\in \Lip(U),  \|\varphi\|_{\Lip}\leq 1\big\} \qquad \text{for every $\mu\in(\Lip(U))'$}\,.
\end{equation*}
We notice that, by definition of~$\| \cdot\|_{\mathrm{BL}}$, we always have
\begin{displaymath}
\| \mu \|_{\mathrm{BL}} \leq \| \mu \|_{\mathrm{TV}} \qquad \text{for every $\mu \in \mathcal{M}(U)$}\,.
\end{displaymath}
In particular, $\| \lambda \|_{\mathrm{BL}} \leq 1$ for every $\lambda \in \Pp(U)$.

We endow $\overline{Y}$ with the norm
\begin{displaymath}
\lVert y\rVert_{\overline Y}=\lVert(x,\lambda)\rVert_{\overline Y}\coloneqq \lvert x \rvert+\lVert \lambda \rVert_{\BL} \,.
\end{displaymath}
For every $R>0$, we denote by~$\B_R$ the closed ball of radius~$R$ in $\R^{d}$ and by $\B_R^Y$ the ball of radius~$R$ in~$Y$, namely $\B_R^Y=\{y\in Y:\lVert y\rVert_{\overline Y}\leq R\}$. We notice that~$\B^{Y}_{R}$ is a compact set, as~$Y$ is locally compact by our assumptions on~$U$.

As in~\cite{MorSol19}, we consider, for every~$\Psi \in \Pp_{1} (Y)$, the velocity field $v_{\Psi} \colon Y \to \R^{d}$ such that
\begin{itemize}
\item[($v_1$)] for every $R>0$, $v_{\Psi} \in \mathrm{Lip} (\B^{Y}_{R}; \R^{d})$ uniformly with respect to~$\Psi \in \Pp( \B^{Y}_{R})$, i.e., there exists~$L_{v, R}>0$ such that
\begin{displaymath}
| v_{\Psi} (y_{1}) - v_{\Psi}(y_{2}) | \leq L_{v, R} \| y_{1} - y_{2} \|_{\overline{Y}} \qquad \text{for every $y_{1}, y_{2} \in Y$}\,;
\end{displaymath}

\item[($v_2$)] for every $R>0$ there exists $L_{v, R}>0$ such that for every $\Psi_{1}, \Psi_{2} \in \Pp(\B^{Y}_{R})$ and every $y \in \B^{Y}_{R}$
\begin{displaymath}
| v_{\Psi_{1}} (y) - v_{\Psi_{2}} (y) | \leq L_{v, R} W_{1} (\Psi_{1}, \Psi_{2})\,;
\end{displaymath}

\item[($v_3$)] there exists $M_{v}>0$ such that for every $y \in Y$ and every $\Psi \in \Pp_{1}(Y)$
\begin{displaymath}
| v_{\Psi} (y) | \leq M_{v} \big( 1 + \| y \|_{\overline{Y}} + m_{1} ( \Psi) \big) \,.
\end{displaymath}
\end{itemize}

As for~$\T$, for every $\Psi \in  \Pp_{1} (Y)$ we assume that the operator $\T_{\Psi} \colon Y \to \F(U)$ is such that
\begin{itemize}

\item[($\T_0$)] for every $(y , \Psi) \in Y \times \Pp_{1}( Y )$, the constants belong to the kernel of~$\T_{\Psi}(y)$, i.e., 
\begin{displaymath}
\left\langle \T_{\Psi}(y), 1 \right\rangle_{\F(U), \mathrm{Lip}(U)} = 0 \,;
\end{displaymath}

\item[($\T_1$)] there exists $M_{\T}>0$ such that for every $y \in Y$ and every $\Psi \in \Pp_{1}(Y)$
\begin{displaymath}
\| \T_{\Psi}(y) \|_{\mathrm{BL}} \leq M_{\T} \big( 1 + \| y \|_{\overline{Y}} + m_{1} ( \Psi) \big)\,;
\end{displaymath}

\item[($\T_2$)] for every $R>0$, there exists $L_{\T, R}>0$ such that for every $(y_{1}, \Psi_{1}), (y_{2}, \Psi_{2}) \in \B^{Y}_{R} \times \Pp (\B^{Y}_{R})$
\begin{displaymath}
\| \T_{\Psi_{1}} ( y_{1} ) - \T_{\Psi_{2}} (y_{2} ) \|_{\overline{Y}} \leq L_{\T, R} \big( \| y_{1} - y_{2} \|_{\mathrm{BL}} + W_{1} (\Psi_{1}, \Psi_{2}) \big)\,;
\end{displaymath}

\item[($\T_3$)] for every $R>0$ there exists $\delta_{R}>0$ such that for every $(y, \Psi) \in \B^{Y}_{R} \times \Pp_{1}(Y)$ we have
\begin{displaymath}
\T_{\Psi} (y) + \delta_{R} \lambda \geq 0\,.
\end{displaymath}
\end{itemize}

Finally, for every $y  \in Y$ and every $\Psi \in \Pp_{1}(Y)$ we set
\begin{equation}\label{e:b_Psi}
b_{\Psi}(y)  \coloneqq 
\left(\begin{array}{cc}
\displaystyle v_{\Psi}(y) \\ [1mm]
\displaystyle \T_{\Psi} (y)
\end{array}\right),
\end{equation}
which is the velocity field driving the evolution (see \eqref{e:cont_eq} below).


\section{The alternate Lagrangian scheme}\label{s:scheme}

Let $\widehat{\Psi} \in \Pp_{c}(Y)$ be a probability measure on~$Y$ with compact support in~$Y$. Given $T>0$, for every $k\in \mathbb{N}\setminus\{0\}$ we set $\tau_k \coloneq T/k$ and, for $i \in \{0, \ldots, k\}$, $t^{k}_{i} \coloneqq i\tau_{k}$. 

We now show how to construct a curve $\Psi^{k} \colon [0,T] \to \Pp_{1}(Y)$, defined piecewise on each time interval $[t_{i}^{k},t_{i+1}^{}k]$, which approximates a solution~$\Psi \in C([0,1]; \Pp_{1}(Y))$ of the initial-value problem for the nonlinear continuity equation
\begin{equation}\label{e:cont_eq}
\partial_{t} \Psi_{t} + \dive ( b_{\Psi_{t}} \Psi_{t}) = 0 \,, \qquad \Psi_{0} = \widehat{\Psi}\,.
\end{equation}
Let $\Psi^{k}_{0} \coloneqq \widehat{\Psi}$. In each interval $[t^{k}_{i}, t^{k}_{i+1})$, assume the measure $\Psi^{k}_{i} \in \Pp_{1}(Y)$ to be known. With this knowledge, we update the state of the system with the following procedure, consisting of two steps.

{\bf Step 1.} We update the label $\lambda_{(\hat x,\hat\lambda)}(t) \in \Pp(U)$ of a player that at time~$t^{k}_{i}$ sits in~$\hat x \in \R^{d}$ with label~$\hat \lambda \in \Pp(U)$ by setting
\begin{equation}\label{e:ODE1}
\lambda_{(\hat x, \hat\lambda)}(t^k_{i+1})\coloneqq \lambda_{(\hat x,\hat\lambda)}+\tau_k \T_{\Psi_i^k}\big(\hat x,\lambda_{(\hat x,\hat \lambda)}(t^k_i)\big)\,.
\end{equation}
At this stage, we assume that~$\lambda_{(\hat{x}, \hat{\lambda})} (t^{k}_{i+1}) \in \Pp(U)$ and we continue with the construction of the piecewise affine interpolant between~$\lambda_{(\hat x,\hat\lambda)}(t^{k}_{i})$ and~$\lambda_{(\hat x,\hat\lambda)}(t^{k}_{i+1})$, defined as the function~$\lambda^{k}_{(\hat x,\hat\lambda),i+1} \colon [t^{k}_{i}, t^{k}_{i+1}] \to \Pp(U)$ such that
\begin{equation}\label{100}
\lambda^{k}_{(\hat x,\hat\lambda),i+1}(t)\coloneqq \frac{t-t_{i}^{k}}{\tau_k} \lambda_{(\hat x,\hat\lambda)}(t^{k}_{i+1})+ \bigg(1-\frac{t-t_{i}^{k}}{\tau_k}\bigg) \lambda_{(\hat x,\hat\lambda)}(t^{k}_{i})\,.
\end{equation}
In Lemma~\ref{l:Gronwall_bound} below, we show that the assumption~$\lambda_{(\hat{x}, \hat{\lambda})} (t^{k}_{i+1}) \in \Pp(U)$ is actually satisfied for~$k$ large enough (and therefore $\tau_{k}$ small enough), independently of~$i=0, \ldots, k-1$.
Giving Lemma~\ref{l:Gronwall_bound} for granted for the time being, we define the map~$\Lambda^{k}_{i+1} \colon  [t^{k}_{i}, t^{k}_{i+1}] \times \R^{d} \times \Pp(U) \to \Pp(U)$ as
\begin{equation}\label{e:lambda}
\Lambda^{k}_{i+1} (t, \hat x, \hat\lambda) \coloneqq \lambda^{k}_{(\hat x,\hat\lambda),i+1} (t) 
\qquad \text{for every $(t,\hat x, \hat\lambda) \in[t^{k}_{i}, t^{k}_{i+1} ] \times \R^{d} \times \Pp(U)$}\,,
\end{equation}
and transport it to the state of the system by defining
\begin{equation}\label{e:tilde_Psi}
\widetilde{\Psi}^{k}_{i+1} \coloneqq (\id; \Lambda^{k}_{i+1} (t^{k}_{i+1}, \cdot, \cdot))_{\#} \Psi^{k}_{i} \in \Pp_{1}(Y)\,.
\end{equation}

{\bf Step 2.} In the second step we update the positions of the players. Precisely, a player that at time~$t^{k}_{i}$ sits in the position~$\hat x$ with label~$\hat\lambda$ will now move following the velocity field given by~$v_{\widetilde{\Psi}^{k}_{i+1}}\big( x_{(\hat x,\hat\lambda)}(t_{i}^{k}), \lambda^{k}_{(\hat x,\hat\lambda),i+1}(t^{k}_{i+1}) \big)$, which is determined by the updated label~$\lambda^{k}_{(\hat x,\hat\lambda),i+1}(t^{k}_{i+1})$  just obtained in~\eqref{e:ODE1}. Hence, we set
\begin{equation}\label{e:ODE2}
x_{(\hat x,\hat\lambda)}(t_{i+1}^{k}) \coloneqq x_{(\hat x,\hat\lambda)}(t_{i}^{k})+\tau_k v_{\widetilde{\Psi}^{k}_{i+1}} \big( x_{(\hat x,\hat\lambda)}(t_{i}^{k}), \lambda^{k}_{(\hat x,\hat\lambda),i+1}(t^{k}_{i+1}) \big) \,.
\end{equation}
Also in this case, 
we can define the affine interpolant between~$x_{(\hat x,\hat\lambda)}(t^{k}_{i})$ and~$x_{(\hat x,\hat\lambda)}(t^{k}_{i+1})$, as a function~$x^{k}_{(\hat x,\hat\lambda),i+1} \colon [t^{k}_{i}, t^{k}_{i+1}] \to \R^{d}$, by
\begin{equation}\label{101}
x^{k}_{(\hat x,\hat\lambda),i+1}(t)\coloneqq \frac{t-t_{i}^{k}}{\tau_k} x_{(\hat x,\hat\lambda)}(t^{k}_{i+1})+ \bigg(1-\frac{t-t_{i}^{k}}{\tau_k}\bigg) x_{(\hat x,\hat\lambda)}(t^{k}_{i})
\end{equation}
We notice that~\eqref{101}, in contrast with~\eqref{100}, is always well defined, since~$\R^{d}$ is a convex space and the velocity field is an element of~$\R^{d}$.

Eventually, we define the map $X^{k}_{i+1} \colon [t^{k}_{i}, t^{k}_{i+1}] \times \R^{d} \times \Pp(U) \to \R^{d}$ as
\begin{equation}\label{e:X}
X^{k}_{i+1} (t,\hat x, \hat \lambda) \coloneqq x^{k}_{(\hat x,\hat\lambda),i+1} (t) \qquad \text{for every $(t, \hat x, \hat\lambda) \in [t^{k}_{i}, t^{k}_{i+1}] \times \R^{d} \times \Pp(U)$}
\end{equation}
and we set
\begin{equation}\label{e:Psi}
\Psi^{k} (t) \coloneqq \Big( X^{k}_{i+1} (t, \cdot, \cdot) ; \Lambda^{k}_{i+1} (t, \cdot, \cdot ) \Big)_{\#} \Psi^{k}_{i}\,, \qquad \Psi^{k}_{i+1} \coloneqq \Psi^{k} (t^{k}_{i+1})\,.
\end{equation}
For later use, we also define
\begin{eqnarray}
&& \label{e:Psi_tilde}
\widetilde{\Psi}^{k}(t) \coloneqq \widetilde{\Psi}^{k}_{i+1} \qquad \text{for every $t\in (t^{k}_{i}, t^{k}_{i+1}]$}\,,\\[1mm]
&& \label{e:underline_Psi}
 \underline{\Psi}^{k}(t) \coloneqq \Psi^{k}_{i} \qquad \text{for every $t\in [t^{k}_{i}, t^{k}_{i+1})$}\,.
\end{eqnarray}

By an application of Gronwall inequality, in the following lemma we give an estimate of $\Big| x^{k}_{(\hat x,\hat\lambda),i+1} (t)\Big |$ and $\Big\| \lambda^{k}_{(\hat x,\hat\lambda),i+1}( t)\Big\|_{\mathrm{BL}}$ in terms of $|\hat x |$ and~$\|\hat \lambda \|_{\mathrm{BL}}$. As a consequence, we deduce that the construction above is well defined for $\tau_k$ sufficiently small and can be iterated over~$i= 0, \ldots, k-1$, since the initial condition~$\widehat{\Psi}$ has a compact support in~$Y$. This indeed implies that each~$\Psi^{k}_{i}$ belongs to~$\Pp_{c}(Y) \subseteq \Pp_{1}(Y)$. 


\begin{lemma}\label{l:Gronwall_bound}
Let $\widehat{\Psi} \in \Pp_{c}(Y)$. Then, for~$k$ large enough the curves~$\Psi^{k}(\cdot)$,~$\underline{\Psi}^{k}(\cdot)$, and~$\widetilde{\Psi}^{k}(\cdot)$ are well defined from~$[0,T]$ with values in~$\Pp_{1}(Y)$. Furthermore, there exists~$R>0$ independent of~$k$ and~$t$ such that $\Psi^{k}(t), \underline{\Psi}^{k}(t), \widetilde{\Psi}^{k}(t)\in \Pp( \B^{Y}_{R})$.
\end{lemma}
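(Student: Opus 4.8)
The plan is to extract, purely from the growth condition ($v_3$), a radius $R$ that works uniformly in $k$ and $t$, and only afterwards to fix $k$ large enough to make the scheme well posed inside $\B^{Y}_{R}$. Since $\widehat{\Psi}\in\Pp_{c}(Y)$, choose $R_{0}>0$ with $\spt\widehat{\Psi}\subseteq\B^{Y}_{R_{0}}$.

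For the a priori radius: as long as the labels produced by~\eqref{e:ODE1} happen to be probability measures, all $\lambda$-components occurring in the scheme — including the affine interpolants~\eqref{100}, which are convex combinations of probability measures — automatically satisfy $\|\cdot\|_{\mathrm{BL}}\le 1$, so only the $x$-component can grow. Since~\eqref{e:tilde_Psi} leaves the $x$-coordinate untouched, $m_{1}(\widetilde{\Psi}^{k}_{i+1})\le 1+\sup\{|x|:(x,\lambda)\in\spt\Psi^{k}_{i}\}$; plugging this, together with $\|(\hat x,\lambda')\|_{\overline{Y}}\le|\hat x|+1$, into~\eqref{e:ODE2} and using ($v_3$) yields a recursion of the form $r^{k}_{i+1}\le(1+2M_{v}\tau_{k})\,r^{k}_{i}+3M_{v}\tau_{k}$ for $r^{k}_{i}:=\sup\{|x|:(x,\lambda)\in\spt\Psi^{k}_{i}\}$, with $r^{k}_{0}=R_{0}$. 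A discrete Gronwall argument (using $i\tau_{k}\le T$) then gives $r^{k}_{i}\le e^{2M_{v}T}(R_{0}+\tfrac{3}{2})=:R_{1}$ for all $i$ and $k$, and I set $R:=R_{1}+1$.

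With $R$ now fixed, ($\T_3$) provides $\delta_{R}>0$, and I require $k\ge k_{0}$ with $\tau_{k}\delta_{R}\le 1$. For such $k$ I prove by induction on $i$ that $\Psi^{k}_{i}$ is a well-defined element of $\Pp(\B^{Y}_{R})$ (the base case being $\Psi^{k}_{0}=\widehat{\Psi}$, since $R_{0}\le R$). In the inductive step, for $(\hat x,\hat\lambda)\in\spt\Psi^{k}_{i}\subseteq\B^{Y}_{R}$, the update~\eqref{e:ODE1} produces a measure of total mass $1$ by ($\T_0$) which is nonnegative because $\hat\lambda+\tau_{k}\T_{\Psi^{k}_{i}}(\hat x,\hat\lambda)=(1-\tau_{k}\delta_{R})\hat\lambda+\tau_{k}\bigl(\T_{\Psi^{k}_{i}}(\hat x,\hat\lambda)+\delta_{R}\hat\lambda\bigr)\ge 0$ by ($\T_3$) and $\tau_{k}\delta_{R}\le 1$; hence $\lambda_{(\hat x,\hat\lambda)}(t^{k}_{i+1})\in\Pp(U)$, the interpolant~\eqref{100} and the maps $\Lambda^{k}_{i+1}$, $X^{k}_{i+1}$ are well defined, $\widetilde{\Psi}^{k}_{i+1}$ and $\Psi^{k}_{i+1}$ are genuine probability measures on $Y$, and the recursion of the previous paragraph is now rigorously justified, giving $\spt\Psi^{k}_{i+1}\subseteq\B^{Y}_{R_{1}}\subseteq\B^{Y}_{R}$. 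Finally, for $t\in(t^{k}_{i},t^{k}_{i+1})$ the $x$-component~\eqref{101} is a convex combination of two vectors of modulus $\le R_{1}$ and the $\lambda$-component~\eqref{100} is a probability measure, so $\spt\Psi^{k}(t)\subseteq\B^{Y}_{R}$; likewise $\widetilde{\Psi}^{k}(t)=\widetilde{\Psi}^{k}_{i+1}$ and $\underline{\Psi}^{k}(t)=\Psi^{k}_{i}$ are supported in $\B^{Y}_{R}$. Since $\B^{Y}_{R}$ is compact, $\Pp(\B^{Y}_{R})\subseteq\Pp_{c}(Y)\subseteq\Pp_{1}(Y)$, which gives the claim.

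The only genuine subtlety, worth flagging, is the apparent circularity: invoking ($\T_3$) to keep the $\lambda$-updates inside $\Pp(U)$ requires an a priori ball $\B^{Y}_{R}$, while the Gronwall estimate that produces $R$ implicitly uses that those updates are probability measures (so as to bound $\|\lambda\|_{\mathrm{BL}}$ and $m_{1}$). This is resolved precisely by the order above — run the recursion formally first to pin down $R$, then fix $k$ large and close the loop through the induction of the third paragraph.
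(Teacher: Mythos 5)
Your proof is correct and follows essentially the same strategy as the paper's --- a Gronwall-type bound on the $x$-component via $(v_3)$, combined with a positivity argument via $(\T_0)$ and $(\T_3)$ to keep the $\lambda$-updates in $\Pp(U)$ --- differing only in presentation: you run the recursion ``formally'' first to pin down $R$ and then close the loop by induction, whereas the paper intertwines the induction with the bound. Your explicit flagging of the apparent circularity and its resolution is a nice clarification of what the paper does more implicitly.

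One point the paper makes explicit that you elide: after showing that $\hat\lambda + \tau_k\,\T_{\Psi^k_i}(\hat x,\hat\lambda)$ is a nonnegative element of $\F(U)$ pairing to $1$ against the constant function, one must still deduce that it is a genuine Borel probability measure --- a priori it is only a nonnegative bounded functional on $\Lip(U)$, since $\F(U)$ is a subset of $(\Lip(U))'$, not of $\mathcal{M}(U)$. The paper handles this by using the bound from $(\T_1)$ to extend the functional continuously to $C(U)$ and then invoking the Riesz representation theorem. This is a small but necessary step to justify the phrase ``produces a measure.''
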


\begin{proof}
Along the proof of the lemma we denote with~$\lambda^{k}(t, x_{0}, \lambda_{0})$ and~$x^{k}(t, x_{0}, \lambda_{0})$, for $(x_{0}, \lambda_{0}) \in \spt \widehat{\Psi} = : \mathcal{S}$,  the curves obtained by iteratively solving the difference equations~\eqref{e:ODE1} and~\eqref{e:ODE2} in each interval~$[t^{k}_{i}, t^{k}_{i+1}]$ starting from~$(x_{0}, \lambda_{0})$ at time $t_0=0$ and using, at each node~$t^{k}_{i}$, $i= 1, \ldots, k-1$, $\hat \lambda = \lambda^{k}(t^{k}_{i}, x_{0}, \lambda_{0})$ and~$\hat x = x^{k}(t^{k}_{i}, x_{0}, \lambda_{0})$ as new initial conditions. 

As we have already noticed above, the curve~$x^{k}(t, x_{0}, \lambda_{0})$ is well-defined as long as~$\lambda^{k}(t, x_{0}, \lambda_{0})$ and the measures~$\widetilde{\Psi}^{k}_{i}$ are. Therefore, in order to prove the lemma it is enough to show that, for~$\tau_{k}$ small enough, for every~$(x_0, \lambda_0) \in \spt\, \widehat\Psi$ the piecewise linear interpolant~$\lambda^{k} (t, x_{0}, \lambda_{0})$ always belongs to~$\Pp(U)$. This can be done recursively by arguing on each interval~$[t^{k}_{i}, t^{k}_{i+1}]$,~$i=0, \ldots, k-1$. 

To simplify our estimates, we define the piecewise constant interpolation functions
\begin{equation}\label{e:110}
\begin{aligned}
& \underline{x}^{k} (t, x_{0}, \lambda_{0}) \coloneqq x^{k} (t^{k}_{j}, x_{0}, \lambda_{0})\,,  \quad \underline{\lambda}^{k} (t, x_{0}, \lambda_{0}) \coloneqq \lambda^{k}(t^{k}_{j}, x_{0}, \lambda_{0}) \quad & \text{for $t \in [t^{k}_{j}, t^{k}_{j+1})$}\,,\\
& \overline{\lambda}^{k}(t, x_{0}, \lambda_{0}) \coloneqq \lambda^{k}(t^{k}_{j+1}, x_{0}, \lambda_{0}) & \text{for $t \in (t^{k}_{j}, t^{k}_{j+1}]$}\,.
\end{aligned}
\end{equation}
For $i=0$ we have that the initial condition~$\lambda_{0} \in \Pp(U)$, hence there is nothing to show. Assuming that $\lambda^{k} (t^{k}_{j}, x_{0}, \lambda_{0}) \in \Pp(U)$ for every $j =0, \ldots, i$ and every $(x_0, \lambda_0) \in \spt\, \widehat{\Psi}$, we show that $\lambda^{k} (t^{k}_{i+1}, x_{0}, \lambda_{0}) \in \Pp(U)$ for~$k$ large enough, independently of~$i$ and of the initial condition~$(x_0, \lambda_0)$. Since, recalling~\eqref{e:ODE1} and \eqref{100}, we define
\begin{displaymath}
\lambda^{k}( t , x_0 , \lambda_{0}) \coloneqq \underline{\lambda}^{k} (t, x_{0}, \lambda_{0}) + (t - t^{k}_{i}) \T_{\underline \Psi^{k}( t ) }(\underline{x}^{k}(t, x_{0}, \lambda_{0}), \underline{\lambda}^{k}(t , x_{0}, \lambda_{0})) \qquad \text{for $t \in [t^{k}_{i}, t^{k}_{i+1}]$} \,;
\end{displaymath}
 by assumptions~$(\T_{0})$ and~$(\T_{3})$ we are led to showing that the piecewise constant interpolation functions~$\underline{x}^{k}( t, x_{0}, \lambda_{0})$ and~$\underline{\lambda}^{k}( t, x_{0},\lambda_{0})$ are bounded in~$\R^{d}$ and~$\F(U)$, respectively, uniformly with respect to~$(x_{0}, \lambda_{0}) \in \mathcal{S}$ and $t \in [0,t^{k}_{i+1}]$, and that the bound does not depend on~$i$. 
Indeed, if this is the case, let~$R' > 0$ be such that $(\underline{x}^{k}(t, x_{0}, \lambda_{0}) , \underline{\lambda}^{k} ( t, x_{0}, \lambda_{0})) \in \B^{Y}_{R'}$ for every $t \in [0, t^{k}_{i+1}]$ and every $(x_{0}, \lambda_{0}) \in \mathcal{S}$. In particular, by construction~\eqref{e:underline_Psi} of~$\underline\Psi^{k}(t)$ it holds~$\underline\Psi^{k}(t) \in \Pp(\B^{Y}_{R'})$. By~$(\T_3)$ there exists~$\delta_{R'}>0$, independent of~$k$,~$i$, and~$(x_0, \lambda_0) \in \mathcal{S}$, such that for $t \in [t^{k}_{i}, t^{k}_{i+1}]$
\begin{displaymath}
\lambda_{R' } \coloneqq \frac{1}{\delta_{R'}} \T_{\underline{\Psi}^{k}(t)} \big( \underline{x}^{k}(t, x_{0}, \lambda_{0}), \underline{\lambda}^{k}(t, x_{0}, \lambda_{0}) \big) +  \underline{\lambda}^{k}(t, x_{0}, \lambda_{0}) \geq 0\,.
\end{displaymath}
In particular, assumption~$(\T_1)$ implies that~$\lambda_{R'} \in \F(U)$ and satisfies
\begin{displaymath}
\big | \left \langle \lambda_{R'} , \eta \right\rangle_{\F(U), \mathrm{Lip}(U)} \big | \leq \| \eta\|_{\infty} \| \lambda_{R'} \|_{\mathrm{BL}}\,,
\end{displaymath}
so that~$\lambda_{R'}$ can be extended in a unique way to a linear and continuous operator on~$C(U)$. 
The Riesz representation theorem yields that $\lambda_{R'} \in \mathcal{M}_{+}(U)$. Moreover, by~$(\T_0)$ we get
\begin{displaymath}
\left\langle \lambda_{R'}, 1 \right\rangle_{\F(U), \mathrm{Lip}(U)} = \big\langle \underline{\lambda}^{k}(t, x_{0}, \lambda_{0}) , 1 \big\rangle_{\F(U), \mathrm{Lip}(U)} = 1\,,
\end{displaymath}
which implies $\lambda_{R'} \in \Pp(U)$. By the convexity of~$\Pp(U)$ we deduce that whenever $\tau_{k} \leq 1/ \delta_{R'}$
\begin{displaymath}
\lambda^{k}(t, x_{0}, \lambda_{0}) = \underline\lambda^{k}(t, x_{0}, \lambda_{0}) + ( t - t^{k}_{i})  \T_{\underline{\Psi}(t^{k}_{i})} ( \underline{x}^{k}( t, x_{0}, \lambda_{0}) , \underline{\lambda}^{k}( t, x_{0}, \lambda_{0})) \in \Pp(U)
\end{displaymath}
for every $t \in [t^{k}_{i}, t^{k}_{i+1}]$. Being the upper bound~$R'$ independent of~$i$ and of~$(x_{0}, \lambda_{0}) \in \mathcal{S}$, also~$\delta_{R'}$ is. 
Hence, the trajectories $x^{k}(\cdot, x_0, \lambda_0)$ and $\lambda^{k}(\cdot, x_0, \lambda_0)$ are well defined from~$[0,T]$ with values in~$\R^{d}$ and~$\Pp(U)$, respectively.


In order to conclude that the interpolation curves~$x^{k} (t, x_{0}, \lambda_{0})$ and~$\lambda^{k}(t, x_{0}, \lambda_{0})$ are well-defined, we have to estimate $|\underline{x}^{k}(t, x_{0}, \lambda_{0})|$ and~$\lVert\underline{\lambda}^{k}(t, x_{0}, \lambda_{0})\rVert_{\BL}$ for $(x_0, \lambda_{0}) \in \mathcal{S}$. Since we are assuming that~$\lambda^{k}(t^{k}_{j}, x_{0}, \lambda_{0}) \in \Pp(U)$ for $j \in 0, \ldots, i$, we have that $\| \lambda^{k}(t^{k}_{j} , x_{0}, \lambda_{0})\|_{\mathrm{BL}} \leq 1$, and the same holds for~$\lVert \underline{\lambda}^{k}(t, x_{0}, \lambda_{0}) \rVert_{\BL}$. As for~$\underline{x}^{k}(t, x_{0}, \lambda_{0})$, using~\eqref{e:ODE2} and~$(v_3)$ we get
\begin{align}\label{e:104}
| \underline{x}^{k}(t, x_{0}, \lambda_{0})| & \leq | x_{0} | + \int_{0}^{t^{k}_{i}} \big | v_{\widetilde{\Psi}^{k}(\tau)} ( \underline{x}^{k}(\tau, x_{0}, \lambda_{0}), \overline{\lambda}^{k}( \tau, x_{0}, \lambda_{0}) \big | \, \di \tau
\\
&
\leq |x_{0}| + \int_{0}^{t} M_{v} \Big( 3 + 2 \sup_{( \hat{x}, \hat{\lambda}) \in \mathcal{S}} \, | \underline{x}_{k}( \tau, \hat{x}, \hat{\lambda})| \Big) \, \di \tau\,. \nonumber
\end{align}
Let us now fix~$r>0$ such that $\mathcal{S} \subseteq \B^{Y}_{r}$ and let
\begin{displaymath}
f_{k}( t ) \coloneqq  \sup_{(\hat{x}, \hat{\lambda} ) \in \mathcal{S}} \, | \underline{x}^{k}( t, \hat{x}, \hat \lambda ) | \,.
\end{displaymath} 
By taking the supremum over~$\mathcal{S}$ in~\eqref{e:104} we deduce that
\begin{equation}\label{e:105}
f_{k}(t) \leq r + \int_{0}^{t} 3 M_{v} (1 +  f_{k}(\tau))\, \di \tau\,.
\end{equation}
Applying the Gronwall inequality to~\eqref{e:105} we infer that
\begin{equation}\label{e:106}
f_{k}(t) \leq (r + 3 M_{v}  T) e^{3 M_{v}  T} \,.
\end{equation} 
Setting~$R' \coloneqq 1 + (r + 3 M_{v}  T) e^{3 M_{v}  T}$ we have proved that the piecewise constant interpolation function~$t \mapsto (\underline{x}^{k}(t, x_{0}, \lambda_{0}) , \underline{\lambda}^{k}(t, x_{0}, \lambda_{0}))$ belongs to~$\B^{Y}_{R'}$ for every $t \in [t^{k}_{i}, t^{k}_{i+1})$ and every~$(x_{0}, \lambda_{0}) \in \mathcal{S}$. In particular, we notice that the computations above are  independent of the choice of~$i$, as long as we know that~$\lambda^{k}(t^{k}_{j}, x_{0}, \lambda_{0}) \in \Pp(U)$ for every $j=0, \ldots, i$ and every~$(x_{0}, \lambda_{0}) \in \mathcal{S}$. With this control at hand, we conclude, as explained above, that~\eqref{e:ODE1} and~\eqref{e:ODE2} are well-posed.

Finally, we estimate~$x^{k}(t, x_{0}, \lambda_{0})$. For $(x_{0}, \lambda_{0}) \in \spt \widehat{\Psi}$ and $t \in [0,T]$, by~$(v_{3})$ we have
\begin{equation}\label{e:107}
\begin{split}
| x^{k}(t, x_{0}, \lambda_{0}) & | \leq | x_{0} | + \int_{0}^{t} \big| v_{\widetilde{\Psi}^{k}(\tau)} \big( \underline{x}^{k}(\tau, x_{0}, \lambda_{0} ) , \overline{\lambda}^{k}(\tau, x_{0}, \lambda_{0}) \big) \big| \, \di \tau 
\\
&
\vphantom{\int} \leq r + 2M_{v}( 1 + R' ) T\,.
\end{split}
\end{equation}
Setting $R \coloneqq \max \{R', r + 2M_{v} (1 + R')T + 1 \}$, we obtain that $\Psi^{k}(t), \widetilde{\Psi}^{k}(t), \underline{\Psi}^{k} (t) \in \Pp(\B^{Y}_{R})$ for every $t \in [0,T]$ and every $k \in \mathbb{N}$ large enough.
\end{proof}

In the next proposition we show that the curve~$\Psi^{k}(\cdot)$ solves the continuity equation \eqref{e:cont_eq} up to an error of order~$\tau_{k}$. 
\begin{proposition}\label{p:discrete_equation}
Let $\widehat{\Psi} \in \Pp_{c}(Y)$, let~$\Psi^{k} \colon [0,T] \to \Pp_{1}(Y)$ be the curve defined in~\eqref{e:Psi} starting from~$\widehat{\Psi}$, and let~$\widetilde{\Psi}^{k}$ be as in~\eqref{e:Psi_tilde}. Then, the following holds: there exists a positive constant~$C$ such that for every $\varphi \in C_{b}^{1}( \R^{d} \times \F(U))$, every~$k \in \mathbb{N}$, every $i \in \{0, \ldots, k-1\}$, and every $t \in (t^{k}_{i}, t^{k}_{i+1})$,
\begin{equation}\label{e:approx_equation}
\frac{\di}{\di t} \int_{Y} \varphi (x, \lambda) \, \di \Psi^{k}(t)(x, \lambda) = \int_{Y} \nabla \varphi (x, \lambda) \cdot b_{\Psi^{k}(t)} (x, \lambda) \, \di \Psi^{k}(t) (x, \lambda) + \vartheta_{k}(\varphi) \,,
\end{equation}
where $|\vartheta_{k}(\varphi)|  \leq C \| \varphi \|_{C^{1}_{b}} \tau_{k}$.
\end{proposition}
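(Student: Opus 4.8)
The plan is to fix $t \in (t^k_i, t^k_{i+1})$ and differentiate the right-hand side of the defining formula \eqref{e:Psi} directly. On this interval $\Psi^k(t) = (X^k_{i+1}(t, \cdot, \cdot); \Lambda^k_{i+1}(t, \cdot, \cdot))_\# \Psi^k_i$, and the maps $t \mapsto X^k_{i+1}(t, \hat x, \hat \lambda)$ and $t \mapsto \Lambda^k_{i+1}(t, \hat x, \hat \lambda)$ are affine (by \eqref{100}, \eqref{101}), with time derivatives
\[
\dot X^k_{i+1}(t, \hat x, \hat \lambda) = v_{\widetilde \Psi^k_{i+1}}\bigl(\hat x, \Lambda^k_{i+1}(t^k_{i+1}, \hat x, \hat \lambda)\bigr), \qquad \dot \Lambda^k_{i+1}(t, \hat x, \hat \lambda) = \T_{\underline \Psi^k(t)}\bigl(\hat x, \hat \lambda\bigr),
\]
both constant in $t$ on the interval. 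Hence, for $\varphi \in C^1_b(\R^d \times \F(U))$, the chain rule gives
\[
\frac{\di}{\di t} \int_Y \varphi \, \di \Psi^k(t) = \int_Y \nabla \varphi\bigl(X^k_{i+1}(t,\hat x,\hat\lambda), \Lambda^k_{i+1}(t,\hat x,\hat\lambda)\bigr) \cdot \bigl(\dot X^k_{i+1}, \dot \Lambda^k_{i+1}\bigr) \, \di \Psi^k_i(\hat x, \hat \lambda).
\]
The task is then to compare this with $\int_Y \nabla\varphi(x,\lambda) \cdot b_{\Psi^k(t)}(x,\lambda) \, \di \Psi^k(t)$, which by change of variables equals $\int_Y \nabla\varphi(X^k_{i+1}, \Lambda^k_{i+1}) \cdot b_{\Psi^k(t)}(X^k_{i+1}, \Lambda^k_{i+1}) \, \di \Psi^k_i$. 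So $\vartheta_k(\varphi)$ is the integral over $\Psi^k_i$ of $\nabla\varphi(X^k_{i+1}, \Lambda^k_{i+1})$ dotted with the discrepancy
\[
\bigl(v_{\widetilde \Psi^k_{i+1}}(\hat x, \Lambda^k_{i+1}(t^k_{i+1})) - v_{\Psi^k(t)}(X^k_{i+1}(t), \Lambda^k_{i+1}(t)),\ \T_{\underline \Psi^k(t)}(\hat x, \hat \lambda) - \T_{\Psi^k(t)}(X^k_{i+1}(t), \Lambda^k_{i+1}(t))\bigr).
\]

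The estimate of this discrepancy is the technical core. Since $|\nabla \varphi| \le \|\varphi\|_{C^1_b}$, it suffices to bound the $\overline Y$-norm of the discrepancy vector by $C\tau_k$ uniformly. I would do this by repeated use of the Lipschitz assumptions $(v_1)$--$(v_2)$ and $(\T_2)$, noting that Lemma~\ref{l:Gronwall_bound} confines everything to a fixed ball $\B^Y_R$, so all the relevant Lipschitz constants $L_{v,R}$, $L_{\T,R}$ are available and uniform in $k$, $i$, $t$. The needed ingredients are: (i) $\|X^k_{i+1}(t) - \hat x\|_{\R^d} \le \tau_k \|v_{\widetilde\Psi^k_{i+1}}\| \le C\tau_k$ and similarly $\|\Lambda^k_{i+1}(t) - \hat\lambda\|_{\BL} \le \tau_k \|\T\| \le C\tau_k$, using the growth bounds $(v_3)$, $(\T_1)$ together with the uniform ball; (ii) $\|\Lambda^k_{i+1}(t^k_{i+1}) - \Lambda^k_{i+1}(t)\|_{\BL} \le \tau_k \|\T\| \le C\tau_k$ by the same affine interpolation argument; (iii) $W_1(\widetilde\Psi^k_{i+1}, \Psi^k(t)) \le C\tau_k$ and $W_1(\underline\Psi^k(t), \Psi^k(t)) \le C\tau_k$, which follow because $\widetilde\Psi^k_{i+1}$, $\underline\Psi^k(t)$, and $\Psi^k(t)$ are all push-forwards of the same measure $\Psi^k_i$ under maps that differ pointwise by $O(\tau_k)$ in the $\overline Y$-norm — indeed $\underline\Psi^k(t) = (\id)_\# \Psi^k_i$ pushed by the identity versus $(X^k_{i+1}(t), \Lambda^k_{i+1}(t))$ which moves points by $O(\tau_k)$, and likewise $\widetilde\Psi^k_{i+1} = (\id; \Lambda^k_{i+1}(t^k_{i+1}))_\# \Psi^k_i$ differs from $\Psi^k(t)$ only in that the $x$-coordinate has moved by $O(\tau_k)$ and $\Lambda^k_{i+1}$ is evaluated at $t^k_{i+1}$ rather than $t$, again an $O(\tau_k)$ displacement; a coupling argument bounds $W_1$ by the sup of these displacements. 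Assembling (i)--(iii) through triangle inequalities and the Lipschitz bounds yields $\|{\rm discrepancy}\|_{\overline Y} \le C\tau_k$, hence $|\vartheta_k(\varphi)| \le C\|\varphi\|_{C^1_b}\tau_k$ with $C = C(R, L_{v,R}, L_{\T,R}, M_v, M_\T, T)$ independent of $k$, $i$, $\varphi$.

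The main obstacle I anticipate is bookkeeping the mismatch in arguments carefully: the velocity in Step 2 is evaluated at $(\hat x, \Lambda^k_{i+1}(t^k_{i+1}))$ — the \emph{old} position but the \emph{updated} (endpoint) label — against the state $\widetilde\Psi^k_{i+1}$, whereas the target $b_{\Psi^k(t)}$ is evaluated at the interpolated point $(X^k_{i+1}(t), \Lambda^k_{i+1}(t))$ against $\Psi^k(t)$; one must split this into three Lipschitz-controlled pieces (position shift, label-time shift, measure shift) without circularity, using only that the displacements are $O(\tau_k)$. The $\T$-component is analogous but simpler since it does not see $\widetilde\Psi^k$. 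I would also remark that $C^1_b(\R^d \times \F(U))$ functions restrict to functions on $Y = \R^d \times \Pp(U)$ in the obvious way, and that the differentiation under the integral sign is justified by dominated convergence, the integrand being affine in $t$ on the interval with derivative bounded uniformly on the compact support of $\Psi^k_i$.
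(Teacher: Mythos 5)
Your proposal is correct and follows essentially the same path as the paper: differentiate the push-forward formula using the affine structure of $X^{k}_{i+1}$ and $\Lambda^{k}_{i+1}$ in $t$, identify the discrepancy between $(v_{\widetilde\Psi^{k}_{i+1}}(\hat x,\Lambda^{k}_{i+1}(t^{k}_{i+1})),\T_{\underline\Psi^{k}(t)}(\hat x,\hat\lambda))$ and $b_{\Psi^{k}(t)}$ evaluated along the interpolants, then split it into an argument shift (controlled by $(v_1)$, $(\T_2)$ and the $O(\tau_k)$ displacement estimates from $(v_3)$, $(\T_1)$) and a measure shift (controlled by $(v_2)$, $(\T_2)$ and an $O(\tau_k)$ bound on $W_1$ between the three auxiliary measures, all push-forwards of $\Psi^{k}_{i}$), everything confined to $\B^{Y}_{R}$ by Lemma~\ref{l:Gronwall_bound}. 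This is precisely the paper's decomposition into $I_{1,1}$, $I_{1,2}$ and $I_2$.
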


\begin{proof}
Let us fix~$\varphi \in C_{b}^{1}(\R^{d}\times \F(U))$ and~$t \in (t^{k}_{i}, t^{k}_{i+1})$. By definition of~$\Psi^{k}(t)$ we have that
\begin{equation}\label{e:1}
\begin{split}
\frac{\di}{\di t} & \int_{Y} \varphi (x, \lambda) \, \di \Psi^{k}(t)(x, \lambda) = \frac{\di}{\di t} \int_{Y} \varphi \big ( X^{k}_{i+1}( t, x, \lambda ) , \Lambda^{k}_{i+1} ( t, x, \lambda) \big) \, \di \Psi^{k}_{i} (x, \lambda) 
\\
&
= \int_{Y} \nabla_{x} \varphi \big ( X^{k}_{i+1}( t, x, \lambda ) , \Lambda^{k}_{i+1} ( t, x, \lambda) \big) \cdot v_{\widetilde{\Psi}^{k} (t)} \big(  x , \Lambda^{k}_{i+1} (t^{k}_{i+1}, x, \lambda) \big) \, \di \Psi^{k}_{i} (x, \lambda)
\\
&
\quad + \int_{Y} \nabla_{\lambda} \varphi  \big ( X^{k}_{i+1}( t, x, \lambda ) , \Lambda^{k}_{i+1} ( t, x, \lambda) \big) \cdot \T_{\underline{\Psi}^{k}(t)}  ( x, \lambda ) \, \di \Psi^{k}_{i} (x, \lambda) \,,
\end{split}
\end{equation}
where $\widetilde{\Psi}^{k}(t)$ and~$\underline{\Psi}^{k}(t)$ are defined in~\eqref{e:Psi_tilde} and~\eqref{e:underline_Psi}, respectively.
 In order to obtain~\eqref{e:approx_equation} from~\eqref{e:1} we have to estimate the following quantities:
\begin{eqnarray*}
&& \displaystyle I_{1} (x, \lambda) \coloneqq \Big| v_{\widetilde{\Psi}^{k}(t)} \big(  x , \Lambda^{k}_{i+1} (t^{k}_{i+1}, x, \lambda) \big) - v_{\Psi^{k}(t)} \big(  X^{k}_{i+1} (t, x, \lambda), \Lambda^{k}_{i+1} (t , x, \lambda) \big) \Big|\,, \\[2mm]
&& \displaystyle I_{2} (x, \lambda) \coloneqq \Big \| \T_{\underline{\Psi}^{k}(t)} ( x, \lambda ) - \T_{\Psi^{k}(t)} (X^{k}_{i+1} (t, x, \lambda) , \Lambda^{k}_{i+1}(t, x, \lambda) )  \Big \| _{\mathrm{BL}} 
\end{eqnarray*}
for $(x, \lambda) \in \spt  \Psi^{k}_{i} \subseteq \B^{Y}_{R}$, where~$R$ has been determined in Lemma~\ref{l:Gronwall_bound}.

Let us start with~$I_{1}$. By triangle inequality we have
\begin{equation}\label{e:2}
\begin{split}
I_{1} (x, \lambda) & \leq   \Big| v_{\widetilde{\Psi}^{k}(t)} \big(  x , \Lambda^{k}_{i+1} (t^{k}_{i+1}, x, \lambda) \big) -  v_{\widetilde{\Psi}^{k}(t)} \big(  X^{k}_{i+1} (t, x, \lambda), \Lambda^{k}_{i+1} (t, x, \lambda) \big) \Big | 
\\
&
 \qquad + \Big |  v_{\widetilde{\Psi}^{k}(t)} \big(  X^{k}_{i+1} (t, x, \lambda), \Lambda^{k}_{i+1} (t, x, \lambda) \big) - v_{\Psi^{k}(t)} \big(  X^{k}_{i+1} (t, x, \lambda), \Lambda^{k}_{i+1} (t , x, \lambda) \big) \Big| 
 \\
 &
\vphantom{\Big|} =: I_{1, 1}(x, \lambda) + I_{1, 2}  (x, \lambda) \,.
 \end{split}
\end{equation}
Since $\widetilde{\Psi}^{k} (t) \in \Pp( \B^{Y}_{R})$, hypothesis~$(v_{1})$ implies that
\begin{displaymath}
\begin{split}
I_{1,1} (x, \lambda) & \leq \vphantom{\int} L_{v, R} \big( | X^{k}_{i+1}(t, x, \lambda) - x| + \|  \Lambda^{k}_{i+1} (t^{k}_{i+1}, x, \lambda) -  \Lambda^{k}_{i+1} (t, x, \lambda) \|_{\mathrm{BL}} \big)
\\
&
\leq L_{v, R}\bigg( \int_{t^{k}_{i}}^{t} \big | v_{\widetilde{\Psi}^{k}(\tau)} \big (x, \Lambda^{k}_{i+1}(t^{k}_{i+1}, x, \lambda) \big) \big | \, \di \tau +  \int_{t}^{t^{k}_{i+1}} \big \| \T_{\Psi^{k}_{i}} (  x, \lambda ) \big  \|_{\mathrm{BL}} \, \di \tau \bigg)  \,,
\end{split}
\end{displaymath}
where, in the second inequality, we have used the systems~\eqref{100} and~\eqref{101}. By~$(v_3)$ and~$(\T_{3})$ we can continue with
\begin{equation}\label{e:4}
\begin{split}
I_{1,1} (x, \lambda) & \leq L_{v, R} \bigg( M_{v} \int_{t^{k}_{i}}^{t} \big( 1 + |x| + \| \Lambda^{k}_{i+1}(t^{k}_{i+1}, x, \lambda) \|_{\mathrm{BL}} + m_{1} ( \widetilde{\Psi}^{k} (\tau)) \big) \, \di \tau 
\\
&
\qquad + M_{\T} \int_{t}^{t^{k}_{i+1}} \big( 1 + | x | + \| \lambda\| _{\mathrm{BL}} + m_{1} ( \Psi^{k}_{i} ) \big) \, \di \tau \bigg) 
\\
&
 \vphantom{\int} \leq L_{v, R} (M_{v} + M_{\T} ) ( 1 + 2R) \tau_{k} \,.
\end{split}
\end{equation}

As for $I_{1,2}$, thanks to assumption~$(v_2)$ and to Lemma~\ref{l:Gronwall_bound} we get
\begin{align*}
I_{1,2} & (x, \lambda)  \leq \vphantom{\int} L_{v, R} W_{1} ( \widetilde{\Psi}^{k}(t) , \Psi^{k}(t) )
\\
&
= L_{v, R} \, \sup_{\eta \in \mathrm{Lip}_{1} ( Y)} \bigg\{ \int_{Y} \eta (x' , \lambda' ) \, \di (\widetilde{\Psi}^{k}(t) - \Psi^{k}(t) ) ( x' , \lambda' ) 
\bigg\}
\\
&
= L_{v, R} \, \sup_{\eta \in \mathrm{Lip}_{1} ( Y)} \bigg \{ \int_{Y} \eta (x, \Lambda^{k}_{i+1} ( t^{k}_{i+1}, x' , \lambda' )) - \eta (X^{k}_{i+1} ( t, x' , \lambda' ), \Lambda^{k}_{i+1} (t, x' , \lambda' ) )  \, \di \Psi^{k}_{i} ( x' , \lambda' ) \bigg\}
\\
&
\leq  L_{v, R} \int_{Y} | x  - X^{k}_{i+1} ( t, x' , \lambda' )| + \| \Lambda^{k}_{i+1} ( t^{k}_{i+1}, x' , \lambda' ) ) -  \Lambda^{k}_{i+1} (t, x' , \lambda' ) \|_{\mathrm{BL}}  \, \di \Psi^{k}_{i} ( x' , \lambda' )
\\
&
\leq  L_{v, R} \int_{Y} \bigg( \int_{t^{k}_{i}}^{t} \big | v_{\widetilde{\Psi}^{k}(\tau)} ( x,  \Lambda^{k}_{i+1} ( t^{k}_{i+1}, x' , \lambda' ) )  \big | \, \di \tau  + \int_{t}^{t^{k}_{i+1}} \big \| \T_{\Psi^{k}_{i}} (x' , \lambda' ) \big \|_{\mathrm{BL}} \, \di \tau \bigg) \, \di \Psi^{k}_{i}( x' , \lambda' )
\\
&
\leq  L_{v, R} \, \tau_{k}  \int_{Y}  \Big( \big | v_{\widetilde{\Psi}^{k}(\tau)} ( x, \Lambda^{k}_{i+1} ( t^{k}_{i+1}, x' , \lambda' ) )  \big | + \big \| \T_{\Psi^{k}_{i}}( x' , \lambda' ) \big \|_{\mathrm{BL}} \Big) \, \di \Psi^{k}_{i}( x' , \lambda' ) \,.
\end{align*}
Making use of~$(v_3)$ and~$(\T_3)$ and recalling Lemma~\ref{l:Gronwall_bound} we can continue with
\begin{equation}\label{e:5}
\begin{split}
I_{1,2} (x, \lambda) & \leq L_{v, R} (M_{v} + M_{\T}) \, \tau_{k}\int_{Y}  \Big( 1 + | x' | + \| \Lambda^{k}_{i+1} (t^{k}_{i+1}, x' , \lambda' ) \|_{\mathrm{BL}} + \| \lambda'  \|_{\mathrm{BL}} 
\\
&
\phantom{\leq L_{v, R} (M_{v} + M_{\T}) \, \tau_{k}\int_{Y}  \Big(} + m_{1} (\widetilde{\Psi}^{k} (t) ) + m_{1}(\Psi^{k}_{i}) \Big) \,  \, \di \Psi^{k}_{i} ( x' , \lambda' )
 \\
&
\leq 3 L_{v, R}( M_{v} + M_{\T}) (1+R) \tau_{k}\,. 
\end{split}
\end{equation}
Combining~\eqref{e:2}--\eqref{e:5} we get
\begin{equation}\label{e:6}
I_{1} (x, \lambda) \leq C_{1} \tau_{k}
\end{equation}
for some positive constant~$C_{1}$ independent of~$k$,~$t$,~$\varphi$, and $(x, \lambda) \in \spt \Psi^{k}_{i}$.

Let us now estimate~$I_{2}$.
By Lemma~\ref{l:Gronwall_bound} and by assumption~$(\T_2)$ we get
\begin{equation}\label{e:8}
I_{2} (x, \lambda)  \leq L_{\T, R} \big( | x - X^{k}_{i+1} ( t, x, \lambda)| + \| \lambda - \Lambda^{k}_{i+1} (t, x, \lambda) \|_{\mathrm{BL}} + W_{1} ( \underline{\Psi}^{k} (t) , \Psi^{k}(t))\big) \,.
\end{equation}
Arguing as in~\eqref{e:2}--\eqref{e:6} we deduce from~\eqref{e:8} and from the hypotheses~$(v_{1})$,~$(v_3)$, and~$(\T_{2})$ that 
\begin{equation}\label{e:10}
I_{2}(x, \lambda) \leq C_{2}\tau_{k}
\end{equation}
for some positive constant~$C_{2}$ independent of~$k$,~$t$,~$\varphi$, and $(x, \lambda) \in \spt \Psi^{k}_{i}$.

We are now in a position to conclude the proof of the proposition. We rewrite~\eqref{e:1} as
\begin{align*}
\frac{\di}{\di t} & \int_{Y} \varphi (x, \lambda) \, \di \Psi^{k}(t)(x, \lambda) 
\\
&
= \int_{Y} \nabla_{x} \varphi \big ( X^{k}_{i+1}( t, x, \lambda ) , \Lambda^{k}_{i+1} ( t, x, \lambda) \big) \cdot v_{\Psi^{k} (t)} \big(  X^{k}_{i+1} (t, x, \lambda), \Lambda^{k}_{i+1} (t, x, \lambda) \big) \, \di \Psi^{k}_{i} (x, \lambda)
\\
&
\quad + \int_{Y} \nabla_{\lambda} \varphi  \big ( X^{k}_{i+1}( t, x, \lambda ) , \Lambda^{k}_{i+1} ( t, x, \lambda) \big) \cdot \T_{\Psi^{k}(t)} ( X^{k}_{i+1}( t, x, \lambda), \Lambda^{k} (t, x, \lambda) )  \, \di \Psi^{k}_{i} (x, \lambda)
\\
&
\quad + \int_{Y}  \nabla_{x} \varphi \big ( X^{k}_{i+1}( t, x, \lambda ) , \Lambda^{k}_{i+1} ( t, x, \lambda) \big) \cdot \big( v_{\widetilde{\Psi}^{k}(t)} \big( x , \Lambda^{k}_{i+1} (t^{k}_{i+1}, x, \lambda) \big) 
\\
&
\qquad \phantom{\int_Y} - v_{\Psi^{k}(t)} \big(  X^{k}_{i+1} (t, x, \lambda), \Lambda^{k}_{i+1} (t , x, \lambda) \big) \, \di \Psi^{k}_{i} ( x, \lambda)
\\
&
\quad + \int_{Y} \nabla_{\lambda} \varphi  \big ( X^{k}_{i+1}( t, x, \lambda ) , \Lambda^{k}_{i+1} ( t, x, \lambda) \big) \cdot \big(  \T_{\underline{\Psi}^{k}(t)} ( x, \lambda ) 
\\
&
\qquad \phantom{\int_Y} - \T_{\Psi^{k}(t)} (X^{k}_{i+1} (t, x, \lambda) , \Lambda^{k}(t, x, \lambda) )\big)  \, \di \Psi^{k}_{i} (x, \lambda)   
\\
&
=  \int_{Y} \nabla \varphi (x, \lambda) \cdot b_{\Psi^{k}(t)} (x, \lambda) \, \di \Psi^{k}(t) (x, \lambda) 
\\
&
\quad + \int_{Y}  \nabla_{x} \varphi \big ( X^{k}_{i+1}( t, x, \lambda ) , \Lambda^{k}_{i+1} ( t, x, \lambda) \big) \cdot \big( v_{\widetilde{\Psi}^{k}(t)} \big( x , \Lambda^{k}_{i+1} (t^{k}_{i+1}, x, \lambda) \big) 
\\
&
\qquad \phantom{\int_Y} - v_{\Psi^{k}(t)} \big(  X^{k}_{i+1} (t, x, \lambda), \Lambda^{k}_{i+1} (t , x, \lambda) \big) \, \di \Psi^{k}_{i} ( x, \lambda)
\\
&
\quad + \int_{Y} \nabla_{\lambda} \varphi  \big ( X^{k}_{i+1}( t, x, \lambda ) , \Lambda^{k}_{i+1} ( t, x, \lambda) \big) \cdot \big(  \T_{\underline{\Psi}^{k}(t)} ( x, \lambda ) 
\\
&
\qquad \phantom{\int_Y} - \T_{\Psi^{k}(t)} (X^{k}_{i+1} (t, x, \lambda) , \Lambda^{k}(t, x, \lambda) )\big)  \, \di \Psi^{k}_{i} (x, \lambda)   \,.
 \end{align*}
We conclude by noticing that, thanks to~\eqref{e:6} and~\eqref{e:10}, the last two integrals on the right-hand side of the above equality can be estimated by
\begin{displaymath}
\| \varphi \|_{C^{1}_{b}} \int_{Y} ( I_{1} (x, \lambda)  + I_{2} (x, \lambda) )\, \di \Psi^{k}_{i}(x, \lambda)  \leq C \| \varphi \|_{C^{1}_{b}} \tau_{k}\,,
\end{displaymath}
for a positive constant~$C$ independent of~$k$,~$t$, and~$\varphi$.
\end{proof}

\begin{theorem}\label{t:limit}
Let $\widehat{\Psi} \in \Pp_{c} (Y)$ and let $\Psi^{k}( \cdot)$ be defined as in~\eqref{e:Psi}. Then, $W_{1}(\Psi^{k}(t), \Psi(t)) \to 0$ uniformly in~$[0,T]$, where the curve $\Psi \in C([0,T]; (\Pp_{1}(Y), W_{1}))$ is the unique solution of~\eqref{e:cont_eq} with initial condition~$\Psi(0) = \widehat{\Psi}$.
\end{theorem}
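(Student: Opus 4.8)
The plan is to establish convergence of the alternate Lagrangian scheme by a combination of compactness, stability, and uniqueness. The key ingredients already at our disposal are: (i) Lemma~\ref{l:Gronwall_bound}, which provides a compact set $\B^{Y}_{R}$, independent of $k$ and $t$, containing the supports of $\Psi^{k}(t)$, $\underline{\Psi}^{k}(t)$, and $\widetilde{\Psi}^{k}(t)$; (ii) Proposition~\ref{p:discrete_equation}, which says $\Psi^{k}(\cdot)$ solves the continuity equation \eqref{e:cont_eq} up to a remainder $\vartheta_{k}(\varphi)$ of order $\tau_{k}$; and (iii) the well-posedness (existence and uniqueness) of the limiting equation \eqref{e:cont_eq}, which follows from the superposition principle and the assumptions $(v_1)$--$(v_3)$, $(\T_0)$--$(\T_3)$ as in \cite{MorSol19}.

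First I would prove equi-continuity of the family $\{\Psi^{k}(\cdot)\}_{k}$ in $(\Pp_{1}(Y), W_{1})$. For $s < t$ in the same subinterval $[t^{k}_{i}, t^{k}_{i+1}]$, the map pushing $\Psi^{k}(s)$ to $\Psi^{k}(t)$ moves each point $(x,\lambda)$ by $|X^{k}_{i+1}(t,\cdot)-X^{k}_{i+1}(s,\cdot)| + \|\Lambda^{k}_{i+1}(t,\cdot)-\Lambda^{k}_{i+1}(s,\cdot)\|_{\BL}$, which by \eqref{100}--\eqref{101} together with the linear growth bounds $(v_3)$, $(\T_1)$ and the uniform bound $R$ is controlled by $C(t-s)$; chaining over subintervals gives $W_{1}(\Psi^{k}(s),\Psi^{k}(t)) \le C|t-s|$ for all $s,t \in [0,T]$, with $C$ independent of $k$. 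Since all $\Psi^{k}(t)$ live in the fixed compact set $\Pp(\B^{Y}_{R})$, which is compact in $(\Pp_{1}(Y), W_{1})$, the Ascoli--Arzel\`a theorem yields a subsequence (not relabelled) such that $\Psi^{k}(t) \to \overline{\Psi}(t)$ in $W_{1}$, uniformly on $[0,T]$, for some $\overline{\Psi} \in C([0,T];(\Pp_{1}(Y),W_{1}))$ with $\overline{\Psi}(0)=\widehat{\Psi}$.

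Next I would pass to the limit in the approximate equation. Fixing $\varphi \in C^{1}_{b}(\R^{d}\times\F(U))$ and integrating \eqref{e:approx_equation} in time over $[0,t]$, the left-hand side converges to $\int_{Y}\varphi\,\di\overline{\Psi}(t) - \int_{Y}\varphi\,\di\widehat{\Psi}$ by uniform $W_{1}$-convergence (note $\varphi$ restricted to $\B^{Y}_{R}$ is bounded Lipschitz). For the right-hand side, the integrand $(x,\lambda)\mapsto \nabla\varphi(x,\lambda)\cdot b_{\Psi^{k}(t)}(x,\lambda)$ converges to $\nabla\varphi(x,\lambda)\cdot b_{\overline{\Psi}(t)}(x,\lambda)$ uniformly on $\B^{Y}_{R}$: the continuity of $b_{\Psi}$ in $\Psi$ is exactly assumptions $(v_2)$ and $(\T_2)$, while joint measurability/continuity in $(x,\lambda)$ comes from $(v_1)$ and $(\T_2)$. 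Combined with $W_{1}$-convergence of the measures and dominated convergence in $t$ (using the uniform bound on $b_{\Psi^{k}}$ from $(v_3)$, $(\T_1)$), and using $|\vartheta_{k}(\varphi)| \le C\|\varphi\|_{C^1_b}\tau_k \to 0$, we obtain that $\overline{\Psi}$ satisfies the integral (weak) form of \eqref{e:cont_eq} for all test functions in $C^{1}_{b}(\R^{d}\times\F(U))$, hence is a Eulerian solution.

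Finally, by the uniqueness of the solution to \eqref{e:cont_eq} with initial datum $\widehat{\Psi}$ (which holds in the relevant class of curves supported in a fixed compact set, via the superposition principle as recalled in the introduction), we get $\overline{\Psi} = \Psi$. Since every subsequence of $\{\Psi^{k}\}$ admits a further subsequence converging uniformly to the same limit $\Psi$, the whole sequence converges: $W_{1}(\Psi^{k}(t),\Psi(t)) \to 0$ uniformly in $[0,T]$. The main obstacle I anticipate is not any single step but the bookkeeping in the passage to the limit on the right-hand side of \eqref{e:approx_equation}, where one must simultaneously handle the $x$-argument $X^{k}_{i+1}(t,x,\lambda)$ versus $x$ inside $b$, the dependence of $b$ on the \emph{moving} measure argument ($\Psi^{k}(t)$ versus $\widetilde{\Psi}^{k}(t)$ versus $\underline{\Psi}^{k}(t)$), and the fact that the limit identity is first obtained in differentiated form only for $t$ in the open subintervals — this is precisely why Proposition~\ref{p:discrete_equation} is stated the way it is, and the time-integrated formulation circumvents the difficulty cleanly.
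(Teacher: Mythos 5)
Your proposal is correct and follows essentially the same route as the paper's proof: uniform compact support (Lemma~\ref{l:Gronwall_bound}) plus equi-Lipschitz continuity in $W_1$ give Ascoli--Arzel\`a compactness, then one passes to the limit in the time-integrated form of~\eqref{e:approx_equation} (splitting the nonlinear term into a $W_1$-difference-of-fields part controlled by $(v_2)$, $(\T_2)$ and a narrow-convergence part using boundedness and continuity of $b_{\overline\Psi}$), and uniqueness from~\cite[Theorem~3.5]{MorSol19} upgrades subsequential to full convergence. The only cosmetic difference is that the paper tests against time-dependent $\phi\in C^1_b([0,T]\times\overline Y)$ while you use time-independent test functions; both yield the same weak formulation.
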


\begin{proof}
The existence and uniqueness of the solution to equation~\eqref{e:cont_eq} follow from~\cite[Theorem~3.5]{MorSol19}, so that $\Psi \in C([0,T]; ( \Pp_{1}(Y), W_{1}))$ is well defined.

Let $\phi \in C^{1}_{b} ([0,T] \times \overline{Y})$. In view of Proposition~\ref{p:discrete_equation}, for every~$k\in\N$,~$i\in\{0,\ldots,k-1\}$, and every $t \in (t^{k}_{i}, t^{k}_{i+1})$, we have
\begin{displaymath}
\begin{split}
\frac{\di}{\di t} \int_{Y} \phi (t, x, \lambda) \, \di \Psi^{k}(t) (x, \lambda) = & \ \int_{Y} \partial_{t} \phi(t, x, \lambda) \, \di \Psi^{k}(t) (x, \lambda)
\\
&
 + \int_{Y} \nabla\phi (t, x, \lambda) \cdot b_{\Psi^{k}(t)} (x, \lambda) \, \di \Psi^{k}(t) (x, \lambda) + \theta_{k} ( \phi(t, \cdot, \cdot)) \,,
\end{split}
\end{displaymath}
where $|\theta_{k} ( \phi(t, \cdot, \cdot))| \leq C \tau_{k} \| \phi \| _{C^{1}_{b} ( [0,T]\times \overline{Y})}$ uniformly in~$[0,T]$. By integrating the previous equality over time, we deduce that
\begin{equation}\label{e:11}
\begin{split}
 \int_{Y} \phi (t, x, \lambda) & \, \di \Psi^{k}(t) (x, \lambda)   - \int_{Y} \phi (0, x, \lambda) \, \di \widehat{\Psi} (x, \lambda) = \int_{0}^{t} \int_{Y} \partial_{t} \phi(\tau, x, \lambda) \, \di \Psi^{k}(\tau) (x, \lambda) \, \di \tau 
\\
&
 +\int_{0}^{t} \int_{Y} \nabla\phi (\tau, x, \lambda) \cdot b_{\Psi^{k}(\tau)} (x, \lambda) \, \di \Psi^{k}(\tau) (x, \lambda) \, \di \tau  + \int_{0}^{t} \theta_{k} ( \phi(\tau, \cdot, \cdot))\, \di \tau  \,.
\end{split}
\end{equation}

In order to pass to the limit in~\eqref{e:11}, we have to determine a candidate limit for~$\Psi^{k}(t)$. In Lemma~\ref{l:Gronwall_bound} we have already shown that the supports of~$\Psi^{k}(t)$ are contained in a compact subset of~$\overline{Y}$. We now show the equicontinuity of the sequence~$\Psi^{k}$ with respect to~$W_{1}$. Given $s, t \in [0,T]$, we show that $W_{1} ( \Psi^{k}(s), \Psi^{k}(t)) \leq L | s - t|$ for some $L>0$ independent of~$k$. By triangle inequality, it is enough to show it for~$s, t \in [t^{k}_{i}, t^{k}_{i+1}]$. Arguing as in the proof of Proposition~\ref{p:discrete_equation} we obtain
\begin{equation}
W_{1}( \Psi^{k}(s) , \Psi^{k}(t)) \leq 3 ( M_{v} + M_{\T}) (1+ R) |s-t|\,,
\end{equation}
where~$R$ has been defined in Lemma~\ref{l:Gronwall_bound}. Hence, Ascoli-Arzel\`a theorem yields that there exists $\overline{\Psi} \in C([0,T]; ( \Pp_{1} ( Y) , W_{1}))$ such that, up to a subsequence, $W_{1} ( \Psi^{k}(t), \overline{\Psi} (t)) \to 0 $ uniformly with respect to~$t \in [0,T]$. In particular,~$\overline{\Psi}(0) = \widehat{\Psi}$ and~$\overline{\Psi}(t) \in \Pp( \B^{Y}_{R})$, since $\spt \Psi^{k}(t) \subseteq \B^{Y}_{R}$ for every $k$ and every~$t$.

It remains to show that $\overline{\Psi}$ is a solution to~\eqref{e:cont_eq}, from which we would deduce that $\overline{\Psi} = \Psi$ and that the whole sequence $\Psi^{k}$ converges to~$\Psi$. The first line of~\eqref{e:11} passes to limit as~$k\to \infty$, since the test function~$\phi$ belongs to $C^{1}_{b}([0,T]\times\overline{Y})$ and the convergence of~$\Psi^{k}$ in~$W_{1}$ is uniform in time and implies the narrow convergence. The last term on the right-hand side of~\eqref{e:11} tends to~$0$, since it holds
\begin{displaymath}
\int_{0}^{t} |\theta_{k} (\phi(\tau, \cdot, \cdot)) | \, \di \tau \leq C T  \tau_{k} \| \phi \|_{C^{1}_{b} ( [0,T]\times \overline{Y})}\,.
\end{displaymath}
We conclude by estimating
\begin{equation}\label{e:12}
\begin{split}
\bigg| \int_{0}^{t}  & \!\! \int_{\overline{Y}} \!\! \nabla \phi (\tau, x, \lambda) \cdot b_{ \Psi^{k} (\tau) } (x, \lambda)  \di \Psi^{k} (\tau) (x, \lambda )  \di \tau  - \! \int_{0}^{t}  \!\! \int_{\overline{Y}}\!\! \nabla \phi ( \tau, x, \lambda ) \cdot b_{ \overline{\Psi} (\tau) } (x, \lambda)  \di \overline{\Psi} (\tau) (x, \lambda)  \di \tau  \bigg|
\\
&
\leq \| \phi \|_{C^{1}_{b}} \int_{0}^{t} \int_{\overline Y} \| b_{\Psi^{k}(\tau)} ( x,\lambda) - b_{\overline{\Psi}(\tau)} ( x,\lambda) \|_{\overline{Y}} \, \di \Psi^{k}(\tau) (x, \lambda) \, \di \tau 
\\
&
\qquad + \int_{0}^{t}\bigg| \int_{\overline{Y}}  \nabla \phi( \tau, x, \lambda) \cdot b_{\overline{\Psi}(\tau)} ( x, \lambda) \, \di ( \Psi^{k} ( \tau) - \overline{\Psi}(\tau)) ( x, \lambda) \bigg| \, \di \tau  \,.
\end{split}
\end{equation}
By~\cite[Proposition~3.2]{MorSol19},  Lemma~\ref{l:Gronwall_bound}, and Assumptions~$(v_2)$ and~$(\T_2)$, the first term on the right-hand side of~\eqref{e:12} can be estimated by
\begin{displaymath}
\| \phi \|_{C^{1}_{b}} ( L_{R,v} + L_{R, \T}) \int_{0}^{t} W_{1} ( \Psi^{k}(\tau), \overline{\Psi}(\tau)) \, \di \tau  \to 0 \qquad \text{as $k\to \infty$ uniformly with respect to~$t \in [0,T]$}\,.
\end{displaymath}

As for the second term, we first notice that, by~\cite[Proposition~3.2]{MorSol19} and Lemma~\ref{l:Gronwall_bound}, the function $(x, \lambda) \mapsto b_{\overline{\Psi}(\tau)} ( x, \lambda)$ is continuous from~$\overline{Y}$ to~$\overline{Y}$ and is bounded on~$\B^{Y}_{R}$. Since $\Psi^{k}(\tau)$ converges narrowly to~$\overline{\Psi}(\tau)$, for $\tau \in [0,t]$ we have
\begin{displaymath}
\lim_{k\to\infty} \bigg| \int_{\overline{Y}}  \nabla \phi( \tau, x, \lambda) \cdot b_{\overline{\Psi}(\tau)} ( x, \lambda) \, \di ( \Psi^{k} ( \tau) - \overline{\Psi}(\tau)) ( x, \lambda) \bigg| = 0\,.
\end{displaymath}
Furthermore, by~$(v_3)$ and~$(\T_1)$ we have the uniform bound
\begin{align*}
\bigg| \int_{\overline{Y}}  \nabla & \phi( \tau, x, \lambda) \cdot b_{\overline{\Psi}(\tau)} ( x, \lambda) \, \di ( \Psi^{k} ( \tau) - \overline{\Psi}(\tau)) ( x, \lambda) \bigg| 
\\
&
 \leq 4 \| \phi\|_{C^{1}_{b}} (M_{v} + M_{\T})  ( 1 + R) \| \Psi^{k}(\tau) - \overline{\Psi}(\tau) \|_{\mathrm{TV}} \leq  3 \| \phi\|_{C^{1}_{b}} (M_{v} + M_{\T}) ( 1 + R)
\end{align*}
for $\tau \in [0,t]$. Thus, by dominated convergence also the second term on the right-hand side of~\eqref{e:12} tends to zero as $k\to \infty$.

Eventually, we infer that passing to the limit~$k\to \infty$ in~\eqref{e:11} we get the equality
\begin{displaymath}
\begin{split}
 \int_{Y} \phi (t, x, \lambda) & \, \di \overline{\Psi}(t) (x, \lambda)  - \int_{Y} \phi (0, x, \lambda) \, \di \widehat{\Psi} (x, \lambda) 
 \\
 &= \int_{0}^{t} \int_{Y} \partial_{t} \phi(\tau, x, \lambda) \, \di \overline{\Psi} (\tau) (x, \lambda)
 +\int_{0}^{t} \int_{Y} \nabla\phi (\tau, x, \lambda) \cdot b_{\overline{\Psi}(\tau)} (x, \lambda) \, \di \overline{\Psi} ( \tau) (x, \lambda) 
\end{split}
\end{displaymath}
for every $\phi \in C^{1}_{b} ([0,T]\times \overline{Y})$ and every $t \in [0,T]$. This concludes the proof of the theorem.
\end{proof}

\section{Inhomogeneous replicator dynamics}\label{s:alternative}
We discuss in this section a different discrete-time approximation of the continuity equation~\eqref{e:cont_eq} for the operator~$\T_{\Psi}\colon Y \to \F(U)$ corresponding to the transition operators considered in~\cite{AmbForMorSav18} (see also~\cite[Section~5]{MorSol19}) for the replicator equation, namely
\begin{equation}\label{e:new_T}
\begin{split}
\T_{\Psi} ( x, \lambda ) \coloneqq  \biggl( & \int_{\overline{Y}} \int_{U} J(x, u, x', u') \, \di \lambda' (u') \, \di \Psi ( x', \lambda' )
\\
&
- \int_{U} \int_{\overline{Y}} \int_{U} J(x, u, x', u') \, \di \lambda' (u') \, \di \Psi ( x', \lambda' ) \, \di \lambda (u) \biggr) \lambda
\end{split}
\end{equation} 
defined for every $\Psi \in \Pp_{1}(Y)$ and every $y = (x, \lambda) \in Y$. In~\eqref{e:new_T} we consider a function~$J\colon (\R^{d} \times U)^{2} \to \R$ such that
\begin{itemize}
\item[$(J_1)$] $J$ is locally Lipschitz continuous with respect to all of its variables;

\item[$(J_2)$] there exists $M_{J}>0$ such that for every $(x, u, x', u') \in (\R^{d} \times U)^{2}$
\begin{displaymath}
| J(x, u, x', u') | \leq M_{J} ( 1 + | x| + | x' |) \,.
\end{displaymath}
\end{itemize}
For simplicity of notation, from now on we will write
\begin{align*}
& (J * \Psi) (x, u) \coloneqq \int_{\overline{Y}} \int_{U} J(x, u, x', u') \, \di \lambda' (u') \, \di \Psi ( x', \lambda' )\,,\\[1mm]
& \left\langle J * \Psi , \lambda \right \rangle (x)  \coloneqq \int_{U} (J * \Psi) (x, u) \, \di \lambda (u)\,,
\end{align*}
so that \eqref{e:new_T} can be written as
\begin{equation}\label{e:new_T-a}
\T_{\Psi} (x, \lambda) =  \big((J * \Psi) (x, \cdot) -  \left\langle J * \Psi , \lambda \right \rangle (x) \big) \lambda\,.
\end{equation}

The following proposition holds.

\begin{proposition}{\cite[Proposition~5.8]{MorSol19}}
Under the assumptions~$(J_1)$--$(J_2)$, the operator~$\T_{\Psi}$ defined in~\eqref{e:new_T} satisfies the conditions~$(\T_0)$--$(\T_3)$.
\end{proposition}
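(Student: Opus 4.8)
The plan is to verify the four structural conditions $(\T_0)$--$(\T_3)$ directly from the explicit formula~\eqref{e:new_T-a}, using the regularity and growth hypotheses $(J_1)$--$(J_2)$ on the kernel~$J$. The key observation throughout is that $\T_\Psi(x,\lambda)$ is a scalar function of~$u$ (namely $(J*\Psi)(x,\cdot) - \langle J*\Psi,\lambda\rangle(x)$) multiplied by the measure~$\lambda$, so all the estimates reduce to controlling that scalar function in the supremum and Lipschitz norms, after which the multiplication by~$\lambda \in \Pp(U)$ is harmless since $\|\lambda\|_{\mathrm{BL}} \le 1$ and $\|f\lambda\|_{\mathrm{BL}} \le \|f\|_{\Lip}\|\lambda\|_{\mathrm{BL}}$ for $f \in \Lip(U)$ (and similarly $\|f\lambda\|_{\mathrm{TV}} \le \|f\|_\infty$).

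First I would dispatch $(\T_0)$: testing $\T_\Psi(x,\lambda)$ against the constant~$1$ gives $\langle \T_\Psi(x,\lambda),1\rangle = \big((J*\Psi)(x,\cdot) - \langle J*\Psi,\lambda\rangle(x)\big) \cdot \lambda(U)$, integrated, which is $\int_U (J*\Psi)(x,u)\,\di\lambda(u) - \langle J*\Psi,\lambda\rangle(x) = 0$ by the very definition of $\langle J*\Psi,\lambda\rangle(x)$. For $(\T_1)$, I would use $(J_2)$ to bound $|(J*\Psi)(x,u)| \le M_J\int_{\overline Y}(1+|x|+|x'|)\,\di\Psi(x',\lambda') \le M_J(1+|x|+m_1(\Psi))$, noting $|x'| \le \|(x',\lambda')\|_{\overline Y}$; the same bound holds for the average $\langle J*\Psi,\lambda\rangle(x)$, so the scalar factor is bounded by $2M_J(1+|x|+m_1(\Psi))$ in sup norm, hence $\|\T_\Psi(x,\lambda)\|_{\mathrm{BL}} \le \|\T_\Psi(x,\lambda)\|_{\mathrm{TV}} \le 2M_J(1+\|y\|_{\overline Y}+m_1(\Psi))$, giving $M_\T = 2M_J$. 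For $(\T_3)$, observe that on $\B^Y_R$ the scalar factor is bounded in absolute value by some $\delta_R$ depending only on $R$ (via $(J_2)$ and $m_1(\Psi)\le R$), so $\T_\Psi(x,\lambda) + \delta_R\lambda = \big(\delta_R + (J*\Psi)(x,\cdot) - \langle J*\Psi,\lambda\rangle(x)\big)\lambda \ge 0$ since the bracket is nonnegative and $\lambda \ge 0$.

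The genuinely laborious step is $(\T_2)$, the local Lipschitz bound in $(y,\Psi)$ jointly. Here I would write $\T_{\Psi_1}(y_1) - \T_{\Psi_2}(y_2)$ and split it, adding and subtracting intermediate terms, into a piece where the scalar factor is fixed and $\lambda_1 - \lambda_2$ varies (controlled by $\|\lambda_1-\lambda_2\|_{\mathrm{BL}}$ times the $\Lip$-norm of the scalar factor, which is bounded on $\B^Y_R$) and a piece where $\lambda$ is fixed and the scalar factor varies. For the latter, I must estimate $\|(J*\Psi_1)(x_1,\cdot) - (J*\Psi_2)(x_2,\cdot)\|_{\Lip(U)}$ and the analogous difference of the averages. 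Using $(J_1)$ on the compact set determined by $R$, one gets a local Lipschitz constant $L_{J,R}$ for $J$ in all variables; then $|(J*\Psi_1)(x_1,u) - (J*\Psi_2)(x_2,u)| \le L_{J,R}|x_1-x_2| + |\int J(x_2,u,\cdot,\cdot)\di(\bar\Psi_1 - \bar\Psi_2)|$ where $\bar\Psi_i$ is the appropriate marginal-type measure; the second term is bounded by (Lipschitz constant of $J(x_2,u,\cdot,\cdot)$ in its last two variables) times $W_1(\Psi_1,\Psi_2)$ — here one uses that the map $(x',\lambda') \mapsto \int_U J(x_2,u,x',u')\,\di\lambda'(u')$ is Lipschitz on $\B^Y_R$ with respect to $\|\cdot\|_{\overline Y}$, which follows since it is Lipschitz in $x'$ directly and, being an integral against $\lambda'$ of a function bounded in $\Lip(U)$ norm, Lipschitz in $\lambda'$ with respect to $\|\cdot\|_{\mathrm{BL}}$. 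The Lipschitz-in-$u$ bound for the scalar factor is obtained the same way. Assembling all these pieces with the product-rule splitting yields $(\T_2)$ with some $L_{\T,R}$ depending on $M_J$, $L_{J,R}$, and $R$. The main obstacle is simply the bookkeeping in this last step: keeping track of which variables are frozen, correctly passing from bounds on the scalar multiplier in $\Lip(U)$ to bounds on the product measure in $\|\cdot\|_{\mathrm{BL}}$, and verifying that the $W_1$-Lipschitz dependence survives the nested integration $\di\lambda'(u')\,\di\Psi(x',\lambda')$.
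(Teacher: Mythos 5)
The paper itself does not give a proof of this proposition: it is stated with a citation to \cite[Proposition~5.8]{MorSol19} and nothing more, so there is no proof in this document to compare yours against. That said, your outline is the natural reconstruction and is essentially correct. The observation that $\T_\Psi(x,\lambda)=g\,\lambda$ with $g(u)=(J*\Psi)(x,u)-\langle J*\Psi,\lambda\rangle(x)$, together with the multiplication rule $\|g\lambda\|_{\mathrm{BL}}\le\|g\|_{\Lip}\|\lambda\|_{\mathrm{BL}}$ (and $\|g\lambda\|_{\mathrm{TV}}\le\|g\|_\infty$), is the backbone of the argument. Your treatment of $(\T_0)$ and $(\T_1)$ is complete; for $(\T_2)$ the add-and-subtract decomposition you describe works, although a careful write-up must also decompose the average term $\langle J*\Psi_1,\lambda_1\rangle(x_1)-\langle J*\Psi_2,\lambda_2\rangle(x_2)$ itself, since it depends on $\lambda$ as well as on $(x,\Psi)$; this produces the additional $\|\lambda_1-\lambda_2\|_{\mathrm{BL}}$ contribution and is straightforward bookkeeping.

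One point worth flagging on $(\T_3)$: as written in Section~2 the condition is required for every $\Psi\in\Pp_1(Y)$, but your bound invokes $m_1(\Psi)\le R$, i.e.\ $\Psi\in\Pp(\B^Y_R)$. Read literally, the replicator operator cannot satisfy $(\T_3)$ over all of $\Pp_1(Y)$: the scalar factor $g$ grows with $m_1(\Psi)$ (and, once $(J_1)$ alone controls the $u$-variation, the local Lipschitz constant of $J$ in $u$ may also grow unboundedly in $x'$), so no $\delta_R$ depending only on $R$ exists. Your reading, restricting to $\Psi\in\Pp(\B^Y_R)$, is the only one under which the proposition is true, and it is also exactly how $(\T_3)$ is used everywhere in the paper (e.g.\ in the proof of Lemma~\ref{l:Gronwall_bound}, where $\underline{\Psi}^k(t)\in\Pp(\B^Y_{R'})$). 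So this is best understood as a harmless slip in the paper's formulation of $(\T_3)$ rather than a gap in your argument; it would be worth saying explicitly that $(\T_3)$ is verified for $\Psi\in\Pp(\B^Y_R)$, which suffices for the scheme.
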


We now introduce the \emph{spherical Hellinger distance} between probability measures
\begin{align*}
\HS^{2} ( \lambda_{1}, \lambda_{2}) \coloneqq  \inf\, \biggl\{ \frac{1}{4} \int_{0}^{1} | w_{t}(u) |^{2} \, \di \rho_{t}(u)\, \di t : \, &  \rho \in C([0,1]; \Pp(U)), \\
& \dot{\rho}_t = \bigg( w_t - \int_{U} w_{t} \, \di \rho_{t} \bigg) \rho_{t},\, \rho_{0} = \lambda_{1}, \, \rho_{1} = \lambda_{2}\biggl\}\,,
\end{align*}
defined for every $\lambda_{1}, \lambda_{2} \in \Pp(U)$. For later use, we also define the \emph{Hellinger distance} between nonnegative measures: for every $\mu_{1}, \mu_{2} \in \mathcal{M}_{+}(U)$, we set
\begin{align*}
\He^{2} (\mu_{1}, \mu_{2})&  \coloneqq  \inf\, \biggl\{ \frac{1}{4} \! \int_{0}^{1} | w_{t}(u) |^{2} \, \di \rho_{t} (u)\, \di t :   \rho \in C([0,1]; \mathcal{M}_{+}(U)), 
\, \dot{\rho}_t =  w_t \, \rho_{t},\, \rho_{0} = \mu_{1}, \, \rho_{1} = \mu_{2} \biggl\}
\\
& = \int_{U} \bigg[ \bigg( \frac{\di \mu_{1}}{\di \mu^*} \bigg)^{\frac12} - \bigg( \frac{\di \mu_{2}}{\di \mu^*} \bigg)^{\frac12} \bigg]^{2} \, \di \mu^* .
\end{align*}
where $\mu^{*} \in \mathcal{M}_{+}(U)$ is such that $\mu_{1}, \, \mu_{2} \ll \mu^{*}$. We notice that~$\HS^{2}$ can be expressed in terms of~$\He^{2}$ through
\begin{equation}\label{e:arccos}
\HS^{2}(\lambda_{1}, \lambda_{2}) = \arccos \bigg( 1 - \frac{\He^{2}(\lambda_{1}, \lambda_{2})^{2} }{2}\bigg) \qquad \text{for every $\lambda_{1}, \lambda_{2} \in \Pp(U)$\,,}
\end{equation}
and that the following chain of inequalities holds:
\begin{equation}\label{e:equivalence}
\| \lambda_{1} - \lambda_{2} \|_{\mathrm{BL}} \leq \| \lambda_{1} - \lambda_{2} \|_{\mathrm{TV}} \leq 2 \,\He (\lambda_{1}, \lambda_{2}) \leq 2 \,\HS( \lambda_{1}, \lambda_{2}) \qquad \text{for every $\lambda_{1}, \lambda_{2} \in \Pp(U)$}\,.
\end{equation}

In the spatially homogeneous case, the replicator equation is a \emph{generalized minimizing movement}~\cite{AmbGigSav08} for the functional
\begin{displaymath}
\J_{\mathrm{hom}} (\lambda) \coloneqq - \frac{1}{8} \int_{U}\int_{U} J(u, u') \, \di \lambda(u) \, \di \lambda(u') 
\end{displaymath}
with respect to the spherical Hellinger distance. In the spatially inhomogeneous setting, the payoff functional has a bilinear dependence on $\Psi$ and $\lambda$: for every $\Psi \in \Pp_{1}(Y)$ and every $(x, \lambda) \in Y$ we set 
\begin{equation}\label{e:J}
\J_{\Psi} (x, \lambda) \coloneqq - \frac{1}{4} \left\langle (J * \Psi) ,  \lambda \right\rangle ,
\end{equation}
(the factor $\frac14$ instead of $\frac18$ is due to the dependence on $\lambda$ which is now linear).  We modify the scheme in Section \ref{s:scheme} by replacing the finite difference~\eqref{e:ODE1} with a minimizing movement. Namely, in the interval $[t^{k}_{i}, t^{k}_{i+1})$ let $\Psi^{k}_{i} \in \Pp (U)$ be given and define, for every $( \hat{x}, \hat{\lambda}) \in Y$,
\begin{equation}\label{e:HS_step}
\lambda_{(\hat{x}, \hat{\lambda}), i+1} \coloneqq \argmin \, \bigg\{ \J_{\Psi^{k}_{i}} (\hat{x} , \lambda) + \frac{1}{2\tau_{k}} \HS^{2}(\lambda, \hat{\lambda}) : \, \lambda \in \Pp(U) \bigg\}\,.
\end{equation}
Notice that the measure $\lambda_{(\hat{x}, \hat{\lambda}), i+1} \in \Pp(U)$ is well-defined, as~$\Pp(U)$ is compact and the functional in~\eqref{e:HS_step} is strictly convex. Therefore, we can define~$\lambda^{k}_{(\hat{x}, \hat{\lambda}), i+1}$, $\Lambda^{k}_{i+1}$, and~$\tilde{\Psi}^{k}_{i+1}$ exactly as in~\eqref{100},~\eqref{e:lambda}, and~\eqref{e:tilde_Psi}, respectively. The second step~\eqref{e:ODE2} in the space variable remains instead the same, so that~$x^{k}_{(\hat{x}, \hat{\lambda}), i+1}$,~$X^{k}_{i+1}$,~$\Psi^{k}_{i+1}$ are as in~\eqref{101}, \eqref{e:X}, and \eqref{e:Psi}, respectively. We further refer to~\eqref{e:Psi}, \eqref{e:Psi_tilde}, and \eqref{e:underline_Psi} for the definition of the interpolation curves~$\Psi^{k}$,~$\widetilde{\Psi}^{k}$, and~$\underline{\Psi}^{k}$.

The next lemma gives an estimate on the size of the support of~$\Psi^{k}_{i+1}$ and~$\widetilde{\Psi}^{k}_{i+1}$, showing that they belong to~$\Pp_{c}(Y) \subseteq \Pp_{1}(Y)$ for every~$k\in \N$ and every~$i\in\{0,\ldots,k-1\}$.

\begin{lemma}\label{l:Gronwall2}
Let $\widehat{\Psi} \in \Pp_{c}(Y)$. Then, there exists $R>0$ such that, for every $k \in \mathbb{N}$ and every $t\in [0,T]$, $\Psi^{k}(t), \, \underline{\Psi}^{k}(t), \, \widetilde{\Psi}^{k}(t) \in \Pp(\B^{Y}_{R})$.
\end{lemma}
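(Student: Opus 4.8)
The goal is to prove the analogue of Lemma~\ref{l:Gronwall_bound} when the explicit step~\eqref{e:ODE1} is replaced by the minimizing-movement step~\eqref{e:HS_step}. The key observation is that now there is nothing to prove regarding $\lambda_{(\hat x,\hat\lambda),i+1} \in \Pp(U)$: by construction the argmin in~\eqref{e:HS_step} is automatically a probability measure on $U$, so the piecewise affine interpolant~\eqref{100} stays in $\Pp(U)$ by convexity for every $k$, and the measures $\widetilde{\Psi}^{k}_{i+1}$, $\Psi^{k}_{i+1}$ are well defined at every step without a smallness restriction on $\tau_k$. Hence the only thing left is to produce a uniform (in $k$, $i$, $t$) bound on the size of the support, and this comes entirely from the $x$-component, via exactly the same Gronwall argument as in Lemma~\ref{l:Gronwall_bound}.

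The steps, in order, are as follows. First I would record that the $\lambda$-interpolants remain in $\Pp(U)$, so that $\|\Lambda^{k}_{i+1}(t,\hat x,\hat\lambda)\|_{\mathrm{BL}} \le 1$ for all $t$, all indices, and all initial data, and consequently $\widetilde\Psi^k(t), \underline\Psi^k(t)$ have their $\F(U)$-marginals supported in the unit $\mathrm{BL}$-ball. Second, I would estimate the $x$-trajectories exactly as in~\eqref{e:104}--\eqref{e:107}: using~\eqref{e:ODE2} and assumption $(v_3)$, writing $\underline x^k$, $\overline\lambda^k$ for the piecewise constant interpolants as in~\eqref{e:110}, one gets $|\underline x^k(t,x_0,\lambda_0)| \le |x_0| + \int_0^t M_v(3 + 2 \sup_{\mathcal S} |\underline x^k(\tau,\cdot,\cdot)|)\,\di\tau$ (the constant $3$ absorbing the bound $m_1(\widetilde\Psi^k(\tau)) \le R'$ together with the $\mathrm{BL}$-bounds on the label components, treated self-referentially). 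Setting $f_k(t) \coloneqq \sup_{(\hat x,\hat\lambda)\in\mathcal S}|\underline x^k(t,\hat x,\hat\lambda)|$ and fixing $r>0$ with $\mathcal S = \spt\widehat\Psi \subseteq \B^Y_r$, one obtains $f_k(t) \le r + \int_0^t 3M_v(1+f_k(\tau))\,\di\tau$, and Gronwall gives $f_k(t) \le (r+3M_vT)e^{3M_vT}$, uniformly in $k$. Third, with $R'\coloneqq 1+(r+3M_vT)e^{3M_vT}$ one has $\underline\Psi^k(t)\in\Pp(\B^Y_{R'})$; then bounding $|x^k(t,x_0,\lambda_0)|$ directly through~\eqref{e:ODE2} and $(v_3)$ as in~\eqref{e:107} gives $|x^k(t,x_0,\lambda_0)| \le r + 2M_v(1+R')T$, and setting $R \coloneqq \max\{R', r+2M_v(1+R')T+1\}$ yields $\Psi^k(t), \underline\Psi^k(t), \widetilde\Psi^k(t) \in \Pp(\B^Y_R)$ for all $t\in[0,T]$ and all $k$.

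The only genuinely new point — and the place to be careful — is checking that nothing in the $x$-estimate secretly used the finite-difference structure of~\eqref{e:ODE1}: the velocity field in~\eqref{e:ODE2} is evaluated at $v_{\widetilde\Psi^k_{i+1}}(x^k(t^k_i,\cdot,\cdot),\lambda^k_{(\hat x,\hat\lambda),i+1}(t^k_{i+1}))$, and since $\lambda^k_{(\hat x,\hat\lambda),i+1}(t^k_{i+1}) \in \Pp(U)$ and $\widetilde\Psi^k_{i+1} \in \Pp_1(Y)$ (both facts now holding unconditionally), assumption $(v_3)$ applies verbatim and produces the bound $M_v(1 + |x| + 1 + m_1(\widetilde\Psi^k(\tau)))$, which is what~\eqref{e:104} needs. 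Thus the proof is a short adaptation: I would simply remark that the argument of Lemma~\ref{l:Gronwall_bound} applies \emph{without} the restriction ``$\tau_k$ small enough'', since the constraint $\lambda^k \in \Pp(U)$ that forced the smallness there is now built into the scheme, and then carry out the Gronwall estimate on $f_k$ as above. There is no real obstacle; the main thing is to present it cleanly rather than repeat the whole computation, perhaps by pointing to the relevant displays in the proof of Lemma~\ref{l:Gronwall_bound} and indicating the one simplification.
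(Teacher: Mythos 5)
Your proposal is correct and follows essentially the same route as the paper: observe that the minimizing-movement step~\eqref{e:HS_step} automatically keeps the labels in $\Pp(U)$ (hence $\|\cdot\|_{\mathrm{BL}}\le 1$ and no smallness restriction on $\tau_k$ is needed), then invoke the Gronwall estimate from the proof of Lemma~\ref{l:Gronwall_bound} to bound the $x$-trajectories uniformly. The paper's proof is in fact even shorter, simply pointing to~\eqref{e:104}--\eqref{e:107} and noting the removal of the $k$-large restriction, exactly as you suggest.
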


\begin{proof}
Let us define the piecewise constant interpolation functions~$\underline{\lambda}^{k}$,~$\overline{\lambda}^{k}$, and~$\underline{x}^{k}$ as in~\eqref{e:110}, and let~$x^{k}$ and~$\lambda^{k}$ be the corresponding piecewise affine interpolations. Then, by~\eqref{e:HS_step} we have that $\underline{\lambda}^{k}(t, x_{0}, \lambda_{0}), \, \overline{\lambda}^{k}(t, x_{0}, \lambda_{0})   \in \Pp(U)$ for every $t \in [0,T]$ and every $(x_{0}, \lambda_{0}) \in \spt \widehat{\Psi}$, so that 
\begin{displaymath}
\| \underline{\lambda}^{k} (t, x_{0}, \lambda_{0}) \|_{\mathrm{BL}}, \| \overline{\lambda}^{k} (t, x_{0}, \lambda_{0}) \|_{\mathrm{BL}} \leq 1\,.
\end{displaymath}
Following step by step the proof of~\eqref{e:104} and~\eqref{e:107}, we also deduce that there exists~$R>0$ such that
\begin{equation}\label{e:111}
| x_{k}(t, x_{0}, \lambda_{0}) | \leq R \qquad \text{for every $t \in [0,T]$, every $(x_{0}, \lambda_{0}) \in \spt\, \widehat{\Psi}$, and every $k\in \mathbb{N}$}\,.
\end{equation}
We notice that, being the step~\eqref{e:HS_step} defined through a minimization in~$\Pp(U)$ and not through a finite difference, the estimate~\eqref{e:111} holds for every~$k$, and not only for~$k$ large. Moreover,~\eqref{e:111} yields that $\Psi^{k}(t)$, $\underline{\Psi}^{k}(t)$, $\widetilde{\Psi}^{k}(t) \in \Pp(\B^{Y}_{R})$. 
\end{proof}

In order to write the equivalent of Proposition~\ref{p:discrete_equation}, we have to determine an approximate Euler-Lagrange equation for the minimization problem~\eqref{e:HS_step}. This is the content of the following proposition, written here for generic $\Psi$,~$x$, and~$\lambda$.

\begin{proposition}\label{p:HS_euler}
Let $R>0$. Assume that $\Psi \in \Pp (\B^{Y}_{R})$, $(x, \lambda) \in \B^{Y}_{R}$, and let $\tilde{\lambda} \in \Pp(U)$ be the solution to
\begin{equation}\label{e:112}
\min \, \bigg\{\J_{\Psi} (x , \rho) + \frac{1}{2\tau_{k}} \HS^{2}(\rho, \lambda) : \, \rho \in \Pp(U) \bigg\} \,.
\end{equation}
Then, there exists a constant $C= C(R)>0$ such that
\begin{align}
& \HS ( \tilde{\lambda}, \lambda) \leq C \tau_{k} \,, \label{e:Lipschitz} \\[1mm]
& \bigg\| \frac{\tilde{\lambda} - \lambda}{ \tau_{k}} - \T_{\Psi}(x, \tilde{\lambda})  \bigg\|_{\mathrm{BL}} \leq C \tau_{k}( 1 + \tau_{k}) \,. \label{e:approx_EL}
\end{align}
\end{proposition}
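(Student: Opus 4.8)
The plan is to derive the Euler--Lagrange condition for the minimization problem \eqref{e:112} by differentiating along admissible perturbations of the minimizer, exploiting the gradient-flow structure of the spherical Hellinger distance. Concretely, for $\tilde\lambda$ the (unique, by strict convexity) minimizer and for any test function $\psi \in \mathrm{Lip}(U)$, I would construct a one-parameter family $\rho^s \in \Pp(U)$ with $\rho^0 = \tilde\lambda$ and $\partial_s \rho^s|_{s=0} = (\psi - \int_U \psi \, \di\tilde\lambda)\tilde\lambda$ — that is, a curve in the direction of the ``replicator tangent vector'' associated with $\psi$, which is exactly the class of admissible velocities appearing in the definition of $\HS$. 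First I would compute the derivative at $s=0$ of $\J_\Psi(x,\rho^s)$, which by bilinearity of $\J_\Psi$ in its $\lambda$-argument (see \eqref{e:J}) gives a term of the form $-\tfrac14\langle (J*\Psi)(x,\cdot),(\psi - \int_U\psi\,\di\tilde\lambda)\tilde\lambda\rangle = \langle \T_\Psi(x,\tilde\lambda),\psi\rangle$ up to sign and the factor identifications in \eqref{e:new_T-a}. Then I would differentiate the term $\tfrac{1}{2\tau_k}\HS^2(\rho^s,\lambda)$; the first variation of the squared spherical Hellinger distance along a replicator-type perturbation produces a term that, to leading order in $\HS(\tilde\lambda,\lambda)$, behaves like $\tfrac{1}{\tau_k}\langle \tilde\lambda - \lambda, \psi\rangle$ plus a remainder controlled by $\HS(\tilde\lambda,\lambda)^2$ (this is where the $\arccos$ relation \eqref{e:arccos} and a Taylor expansion of $\He^2$ enter). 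Setting the total derivative to zero and testing against $\psi$ with $\|\psi\|_{\mathrm{Lip}}\le 1$ yields \eqref{e:approx_EL} once \eqref{e:Lipschitz} is known.

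To prove the a priori bound \eqref{e:Lipschitz}, I would use the minimality of $\tilde\lambda$ against the competitor $\lambda$ itself: since $\tilde\lambda$ is optimal,
\[
\J_\Psi(x,\tilde\lambda) + \frac{1}{2\tau_k}\HS^2(\tilde\lambda,\lambda) \le \J_\Psi(x,\lambda) + 0,
\]
so $\HS^2(\tilde\lambda,\lambda) \le 2\tau_k\bigl(\J_\Psi(x,\lambda) - \J_\Psi(x,\tilde\lambda)\bigr)$. The right-hand side I would bound by the Lipschitz continuity of $\lambda \mapsto \J_\Psi(x,\lambda)$ with respect to $\|\cdot\|_{\mathrm{BL}}$ (which follows from $(J_1)$--$(J_2)$ and $\Psi,(x,\lambda)\in \B^Y_R$), giving $\J_\Psi(x,\lambda)-\J_\Psi(x,\tilde\lambda) \le C_R\|\lambda-\tilde\lambda\|_{\mathrm{BL}} \le C_R \cdot 2\,\HS(\tilde\lambda,\lambda)$ by the chain \eqref{e:equivalence}. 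Hence $\HS^2(\tilde\lambda,\lambda) \le 4 C_R \tau_k \HS(\tilde\lambda,\lambda)$, which after dividing gives $\HS(\tilde\lambda,\lambda)\le 4C_R\tau_k$, i.e.\ \eqref{e:Lipschitz} with $C=4C_R$. This bootstrap absorbing-of-one-power-of-$\HS$ is the standard minimizing-movement estimate.

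With \eqref{e:Lipschitz} in hand, the error terms in the first-variation identity are all controlled: the remainder in the expansion of the Hellinger first variation is $O(\HS(\tilde\lambda,\lambda)^2) = O(\tau_k^2)$ after dividing by $\tau_k$, i.e.\ contributes $O(\tau_k)$; and the difference between $\T_\Psi(x,\tilde\lambda)$ (which appears naturally from differentiating $\J$ at $\tilde\lambda$) and whatever precise normalization comes out must be reconciled, but any discrepancy is again of size $\HS(\tilde\lambda,\lambda)\|\cdot\| = O(\tau_k)$ times a bounded factor, and the $(1+\tau_k)$ in \eqref{e:approx_EL} absorbs the next-order correction. Testing against all $\psi$ with $\|\psi\|_{\mathrm{Lip}}\le 1$ and taking the supremum converts the first-variation identity into the $\|\cdot\|_{\mathrm{BL}}$ estimate \eqref{e:approx_EL}.

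The main obstacle I anticipate is making the first variation of $\HS^2$ along admissible (replicator-type) perturbations rigorous and identifying its leading term precisely. The spherical Hellinger distance is defined as an infimum over curves with a constrained velocity, so differentiating $\HS^2(\rho^s,\lambda)$ in $s$ requires either an explicit formula (available through $\He$ and the densities, via \eqref{e:arccos}, but one must then check that the perturbation direction $(\psi-\int\psi\,\di\tilde\lambda)\tilde\lambda$ keeps everything absolutely continuous and the $\arccos$ argument bounded away from its singularity — which is fine precisely because $\HS(\tilde\lambda,\lambda) = O(\tau_k)$ is small) or a careful envelope/differentiability argument. Following the cited approach of \cite[Section~4.2]{GalMon17}, I would work with the explicit representation: writing densities $p = \di\tilde\lambda/\di\mu^*$, $q = \di\lambda/\di\mu^*$, one has $\He^2(\tilde\lambda,\lambda) = \int_U(\sqrt p - \sqrt q)^2\,\di\mu^*$ and $\HS^2 = \arccos(1 - \tfrac12\He^2(\tilde\lambda,\lambda)^2)$; differentiating this composite expression along $\partial_s p^s = (\psi - \int\psi p\,\di\mu^*)p$ and Taylor-expanding $\arccos$ near $1$ (where $\frac{d}{dz}\arccos(1-z) \approx 1$ for small $z$) yields the claimed leading term $\tfrac{1}{\tau_k}\langle\tilde\lambda-\lambda,\psi\rangle$ plus the $O(\tau_k^2)$ remainder after rescaling. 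All other ingredients (Lipschitz bounds on $\J$, boundedness of supports, the norm comparisons) are routine given the earlier results.
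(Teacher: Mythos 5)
Your plan is essentially the paper's: \eqref{e:Lipschitz} follows by testing minimality against the competitor $\lambda$ and using the Lipschitz bound on $\lambda \mapsto \J_\Psi(x,\lambda)$ together with the chain \eqref{e:equivalence}; \eqref{e:approx_EL} follows by perturbing $\tilde\lambda$ along the replicator tangent direction $(\varphi - \langle\varphi,\tilde\lambda\rangle)\tilde\lambda$, computing the first variations of $\J_\Psi$ and of $\tfrac{1}{2\tau_k}\HS^2(\cdot,\lambda)$, and Taylor-expanding the $\arccos$ relation \eqref{e:arccos}, with all remainders absorbed via \eqref{e:Lipschitz}. The one place your sketch goes astray is the parenthetical claim that $\tfrac{d}{dz}\arccos(1-z)\approx 1$ for small $z$: that derivative equals $(2z - z^2)^{-1/2}$ and blows up at $z=0$, so taken literally your argument would fail. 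The correct reason the leading factor is close to $1$ is the chain rule through $z = \He^2(\lambda^\varphi_\varepsilon,\lambda)^2/2$: since $1-(1-z)^2 = \He^4(1 - \He^4/4)$, the $\He^2$ produced by $dz/d\varepsilon = \He^2\, d\He^2/d\varepsilon$ cancels against the $\He^2$ in the denominator of the $\arccos$ derivative, so that $\tfrac{d}{d\varepsilon}\HS^2 = (1 - \He^4/4)^{-1/2}\, \tfrac{d}{d\varepsilon}\He^2$, with a prefactor that is $1 + O(\He^4) = 1 + O(\tau_k^4)$ by \eqref{e:Lipschitz}. With that correction, the rest of your sketch --- expanding $d\He^2/d\varepsilon$ through the densities relative to a common reference measure, and using $2(a-b)a = a^2 - b^2 + (a-b)^2$ to split off the linear term $\langle\tilde\lambda - \lambda,\varphi\rangle$ plus an $O(\He^2(\tilde\lambda,\lambda))$ remainder --- is exactly what the paper does.
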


\begin{proof}
Inequality~\eqref{e:Lipschitz} follows from the minimality of~$\tilde{\lambda}$. Indeed, we have that
\begin{equation}\label{e:113}
 \frac{1}{2\tau_{k}} \HS^{2}(\tilde{\lambda}, \lambda) \leq \big| \J_{\Psi} (x , \lambda) - \J_{\Psi} (x , \tilde{\lambda}) \big| \,. 
\end{equation}
By definition~\eqref{e:J} of~$\J_{\Psi}$, by~$(J_1)$, by the assumptions $\Psi \in \Pp (\B^{Y}_{R})$, $(x, \lambda) \in \B^{Y}_{R}$, and by~\eqref{e:equivalence}, we continue in~\eqref{e:113} with
\begin{equation}\label{e:114}
 \frac{1}{2\tau_{k}} \HS^{2}(\tilde{\lambda}, \lambda) \leq \frac{ M_{J} }{2} (1 + R) \| \tilde{\lambda} - \lambda \|_{\mathrm{TV}}  \leq  M_J (1+R) \HS( \tilde{\lambda}, \lambda) \,.
\end{equation}
From~\eqref{e:114} we deduce~\eqref{e:Lipschitz}.

In order to prove~\eqref{e:approx_EL}, we write explicitly the Euler-Lagrange equation of~\eqref{e:112}. Here, we follow the lines of~\cite[Section~4]{GalMon17}. For every $\varphi \in \mathrm{Lip}(U)$ with~$\| \varphi \|_{\mathrm{Lip}} \leq 1$, we consider the auxiliary system
\begin{equation}\label{e:115}
\left\{ \begin{array}{ll}
\partial_{\varepsilon} \lambda_{\varepsilon} = ( \varphi - \left\langle \varphi, \lambda_{\varepsilon} \right \rangle ) \lambda_{\varepsilon}\,,\\[1mm]
\lambda_{0} = \tilde{\lambda}\,.
\end{array}\right.
\end{equation}
In view of~\cite[Section~I.3, Theorem~1.4, Corollary~1.1]{MR0348562}, the ODE system~\eqref{e:115} admits a unique solution $\lambda^{\varphi}_{\varepsilon} \in \Pp(U)$ for $\varepsilon > 0$. Moreover, if~$\tilde{\lambda} \ll \mu$, it is easy to check that $\lambda^{\varphi}_{\varepsilon} \ll \mu$ for $\varepsilon>0$. In the sequel, we fix $\mu^{*} \in \Pp(U)$ such that $\lambda, \tilde{\lambda} \ll \mu^*$.

Given $\lambda^{\varphi}_{\varepsilon}$, the Euler-Lagrange equation of~\eqref{e:112} reads
\begin{equation}\label{e:116}
\frac{\di}{\di \varepsilon}\bigg|_{\varepsilon = 0}  \!\!\! \J_{\Psi} ( x, \lambda^{\varphi}_{\varepsilon} ) + \frac{1}{2 \tau_{k}} \frac{\di}{\di \varepsilon}\bigg|_{\varepsilon = 0} \!\!\! \HS^{2} ( \lambda^{\varphi}_{\varepsilon}, \lambda) = 0 \,.
\end{equation}
We compute the two derivatives appearing in~\eqref{e:116} separately. In view of~\eqref{e:115}, we have that
\begin{equation}\label{e:117}
\begin{split}
\frac{\di}{\di \varepsilon}\bigg|_{\varepsilon = 0} \J_{\Psi} ( x, \lambda^{\varphi}_{\varepsilon} ) & = -  \frac{1}{4}\frac{\di}{\di \varepsilon}\bigg|_{\varepsilon = 0} \langle (J * \Psi) ,  \lambda_{\varepsilon}^{\varphi} \rangle =  -  \frac{1}{4}\big\langle (J * \Psi) ,  \big( \varphi - \big\langle \varphi , \tilde{\lambda} \big\rangle \big) \tilde{\lambda} \big\rangle 
\\
&
= - \frac{1}{4} \big\langle \big( (J * \Psi) - \big\langle (J * \Psi) , \tilde{\lambda} \big\rangle \big) \tilde{\lambda} , \varphi \big\rangle = - \frac{1}{4} \big\langle\T_{\Psi} ( x, \tilde{\lambda}) , \varphi \big\rangle\,,
\end{split}
\end{equation}
where, in the last equality, we have used~\eqref{e:new_T-a}.

To compute the second term on the left-hand side of~\eqref{e:116}, we first notice that, since $\tilde\lambda , \lambda, \lambda^{\varphi}_{\varepsilon} \ll \mu^{*}$, we can write
\begin{displaymath}
\HS^{2}(  \lambda^{\varphi}_{\varepsilon}, \lambda ) = \arccos \left( 1 - \frac{\He^{2}(\lambda^{\varphi}_{\varepsilon}, \lambda)^2}{2} \right), \qquad  \He^{2}(\lambda^{\varphi}_{\varepsilon}, \lambda) = \int_{U} \left[ \bigg( \frac{\di \lambda^{\varphi}_{\varepsilon}}{\di \mu^{*}} \bigg)^{\frac12} - \bigg( \frac{\di \lambda}{\di \mu^{*}}\bigg)^{\frac12} \right]^{2}   \di \mu^{*} .
\end{displaymath}
Defining~$\delta_{k}(\tilde{\lambda}, \lambda) \in [0,1]$ such that $1 - \delta_{k} ( \tilde{\lambda}, \lambda) = \frac{1}{\sqrt { 1 - \frac{\He^{2} ( \tilde{\lambda}, \lambda)^{2}}{4}}}$,
we have that
\begin{equation}\label{e:119}
\begin{split}
\!\!\! \frac{\di}{\di \varepsilon}\bigg|_{\varepsilon = 0}\!\!\!  \HS^{2} ( \lambda^{\varphi}_{\varepsilon}, \lambda) & = \big( 1 - \delta _{k}( \tilde{\lambda}, \lambda) \big) \frac{\di}{\di \varepsilon}\bigg|_{\varepsilon = 0} \!\!\! \He^{2} ( \lambda^{\varphi}_{\varepsilon}, \lambda) 
\\
&
= 2\big( 1 - \delta_{k} ( \tilde{\lambda}, \lambda) \big) \! \int_{U} \bigg[  \bigg( \frac{\di \tilde \lambda}{\di \mu^{*}} \bigg)^{\frac12} - \bigg( \frac{\di \lambda}{\di \mu^{*}}\bigg)^{\frac12} \bigg] \frac{\di}{\di \varepsilon}\bigg|_{\varepsilon = 0} \bigg( \frac{\di \lambda^{\varphi}_{\varepsilon}}{\di \mu^{*}} \bigg)^{\frac12} \, \di \mu^{*} 
\\
&
=\big( 1 - \delta_{k} ( \tilde{\lambda}, \lambda) \big) \! \int_{U} \bigg[  \bigg( \frac{\di \tilde \lambda}{\di \mu^{*}} \bigg)^{\frac12} - \bigg( \frac{\di \lambda}{\di \mu^{*}}\bigg)^{\frac12} \bigg]  \bigg( \frac{\di \tilde \lambda}{\di \mu^{*}} \bigg)^{-\frac12} \big( \varphi - \langle \varphi, \tilde{\lambda} \rangle \big) \, \di  \tilde{\lambda} 
\\
&
=\big( 1 - \delta_{k} ( \tilde{\lambda}, \lambda) \big) \bigg\langle  \big( \varphi - \langle \varphi, \tilde{\lambda} \rangle \big)  \mu^{*} , \bigg[  \bigg( \frac{\di \tilde \lambda}{\di \mu^{*}} \bigg)^{\frac12} - \bigg( \frac{\di \lambda}{\di \mu^{*}}\bigg)^{\frac12} \bigg] \bigg( \frac{\di \tilde \lambda}{\di \mu^{*}} \bigg)^{\frac12}  \bigg \rangle \,.
\end{split}
\end{equation}
Using the algebraic equality $2 (a - b) a = a^{2} - b^{2} + (a - b )^{2}$, we continue in~\eqref{e:119} with
\begin{equation}\label{e:119a}
\begin{split}
\frac{\di}{\di \varepsilon}\bigg|_{\varepsilon = 0}  \!\!\! \HS^{2} ( \lambda^{\varphi}_{\varepsilon}, \lambda)  =&
 \frac{\big( 1 - \delta_{k} ( \tilde{\lambda}, \lambda) \big)}{2} \bigg\langle \big( \varphi - \langle \varphi, \tilde{\lambda} \rangle \big)  \mu^{*} , \bigg(   \frac{\di \tilde \lambda}{\di \mu^{*}}  -  \frac{\di \lambda}{\di \mu^{*}} \bigg) \bigg \rangle 
\\
& + \frac{\big( 1 - \delta_{k} ( \tilde{\lambda}, \lambda) \big)}{2} \bigg\langle \big( \varphi - \langle \varphi, \tilde{\lambda} \rangle \big)  \mu^{*} ,  \bigg[  \bigg(\frac{\di \tilde \lambda}{\di \mu^{*}}\bigg)^{\frac12}  -  \bigg(\frac{\di \lambda}{\di \mu^{*}}\bigg)^{\frac12} \bigg]^{2} \bigg \rangle 
\\
=& \frac{\big( 1 - \delta_{k} ( \tilde{\lambda}, \lambda) \big)}{2} \bigg\langle  \tilde{\lambda} - \lambda , \big( \varphi - \langle \varphi, \tilde{\lambda} \rangle \big)  \bigg \rangle 
\\
& + \frac{\big( 1 - \delta_{k} ( \tilde{\lambda}, \lambda) \big)}{2} \bigg\langle \big( \varphi - \langle \varphi, \tilde{\lambda} \rangle \big)  \mu^{*} , \bigg[  \bigg(\frac{\di \tilde \lambda}{\di \mu^{*}}\bigg)^{\frac12}  -  \bigg(\frac{\di \lambda}{\di \mu^{*}}\bigg)^{\frac12} \bigg]^{2} \bigg \rangle 
\\
=& \frac{\big( 1 - \delta_{k} ( \tilde{\lambda}, \lambda) \big)}{2} \langle  \tilde{\lambda} - \lambda , \varphi   \rangle 
\\
& + \frac{\big( 1 - \delta_{k} ( \tilde{\lambda}, \lambda) \big)}{2} \bigg\langle \big( \varphi - \langle \varphi, \tilde{\lambda} \rangle \big)  \mu^{*} ,  \bigg[  \bigg(\frac{\di \tilde \lambda}{\di \mu^{*}}\bigg)^{\frac12}  -  \bigg(\frac{\di \lambda}{\di \mu^{*}}\bigg)^{\frac12} \bigg]^{2} \bigg \rangle \,, 
\end{split}
\end{equation}
where, in the last equality, we have used the fact that $\tilde{\lambda}, \lambda \in \Pp(U)$.

In order to conclude with~\eqref{e:approx_EL}, we estimate~$\delta_{k} (\tilde{\lambda}, \lambda)$ and the last term on the right-hand side of~\eqref{e:119a}. In view of~\eqref{e:arccos},~\eqref{e:equivalence}, and~\eqref{e:Lipschitz}, it is easy to check that
\begin{equation}\label{e:delta2}
\delta_{k} ( \tilde{\lambda}, \lambda ) \leq c \tau_{k}^{2}
\end{equation}
for some positive constant~$c = c(R) >0$. Since~$\| \varphi \|_{\mathrm{Lip}} \leq 1$ and~\eqref{e:114} holds, we have that
\begin{equation}\label{e:120}
\begin{split}
\!\!\!\!\! \bigg| \frac{(   1 - \delta_{k} ( \tilde{\lambda}, \lambda) )}{2} \bigg \langle \! \big( \varphi - \langle \varphi, \tilde{\lambda} \rangle \big)  \mu^{*} ,  \bigg[  \bigg(\frac{\di \tilde \lambda}{\di \mu^{*}}\bigg)^{\frac12}  \!\! -  \bigg(\frac{\di \lambda}{\di \mu^{*}}\bigg)^{\frac12} \bigg]^{2} \bigg \rangle \bigg|  \leq \He^{2}( \tilde{\lambda}, \lambda) \leq 4 M_{J}^{2} ( 1 + R)^{2} \tau_{k}^{2} \,.
\end{split}
\end{equation}

Combining~\eqref{e:equivalence},~\eqref{e:Lipschitz}, and~\eqref{e:116}--\eqref{e:120}, we deduce that
\begin{displaymath}
\begin{split}
& \bigg\| \frac{\tilde{\lambda} - \lambda}{\tau_{k}} - \T_{\Psi}(x, \tilde{\lambda}) \bigg\|_{\mathrm{BL}} 
\leq \delta_{k} ( \tilde{\lambda}, \lambda)  \bigg\|  \frac{\tilde{\lambda} - \lambda}{ \tau_{k}} \bigg\|_{\mathrm{BL}} + 8 M_{J}^{2} ( 1 + R)^{2} \tau_{k} \leq C \tau_{k} ( 1 + \tau_{k})\,, 
\end{split}
\end{displaymath}
for some positive constant~$C = C(R)$. This concludes the proof of the proposition.
\end{proof}

We are now in a position to state the equivalent of Proposition~\ref{p:discrete_equation}.

\begin{proposition}\label{p:discrete_equation2}
There exists~$C>0$ such that for every $\varphi \in C_{b}^{1}( \R^{d} \times \F(U))$, every~$k \in \mathbb{N}$, every $i \in \{0, \ldots, k-1\}$, and every $t \in (t^{k}_{i}, t^{k}_{i+1})$,
\begin{equation}\label{e:approx_equation2}
\frac{\di}{\di t} \int_{Y} \varphi (x, \lambda) \, \di \Psi^{k}(t)(x, \lambda) = \int_{Y} \nabla \varphi (x, \lambda) \cdot b_{\Psi^{k}(t)} (x, \lambda) \, \di \Psi^{k}(t) (x, \lambda) + \vartheta_{k}(\varphi) \,,
\end{equation}
where $|\vartheta_{k}(\varphi)|  \leq C \| \varphi \|_{C^{1}_{b}} \tau_{k}$. 

\end{proposition}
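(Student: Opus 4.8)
The plan is to mimic the proof of Proposition~\ref{p:discrete_equation} line by line, the only difference being that the finite-difference update~\eqref{e:ODE1} is now replaced by the minimizing-movement update~\eqref{e:HS_step}, so the label velocity is no longer exactly $\T_{\underline{\Psi}^{k}(t)}$ but differs from it by a controlled error. First I would differentiate $t\mapsto\int_{Y}\varphi\,\di\Psi^{k}(t)$, exactly as in~\eqref{e:1}, using the definition~\eqref{e:Psi} of $\Psi^{k}(t)$ as a push-forward of $\Psi^{k}_{i}$ through $(X^{k}_{i+1}(t,\cdot,\cdot),\Lambda^{k}_{i+1}(t,\cdot,\cdot))$; the chain rule produces an $\nabla_{x}\varphi$-term paired with $v_{\widetilde{\Psi}^{k}(t)}\big(x,\Lambda^{k}_{i+1}(t^{k}_{i+1},x,\lambda)\big)$ coming from~\eqref{e:ODE2} (unchanged from Section~\ref{s:scheme}) and an $\nabla_{\lambda}\varphi$-term paired with the time-derivative $\partial_{t}\Lambda^{k}_{i+1}(t,x,\lambda)$, which by~\eqref{100} equals the difference quotient $\big(\lambda_{(x,\lambda),i+1}-\lambda\big)/\tau_{k}$.

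Next I would insert the approximate Euler--Lagrange identity~\eqref{e:approx_EL} from Proposition~\ref{p:HS_euler}, with $\tilde{\lambda}=\lambda_{(x,\lambda),i+1}=\Lambda^{k}_{i+1}(t^{k}_{i+1},x,\lambda)$ and $\Psi=\underline{\Psi}^{k}(t)=\Psi^{k}_{i}$: this replaces the difference-quotient $\nabla_{\lambda}\varphi$-term by $\nabla_{\lambda}\varphi\cdot\T_{\underline{\Psi}^{k}(t)}\big(x,\Lambda^{k}_{i+1}(t^{k}_{i+1},x,\lambda)\big)$ up to an error bounded in $\|\cdot\|_{\mathrm{BL}}$ by $C\tau_{k}(1+\tau_{k})$, uniformly over $(x,\lambda)\in\spt\Psi^{k}_{i}\subseteq\B^{Y}_{R}$ thanks to Lemma~\ref{l:Gronwall2}. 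Note that here $R$ is independent of $k$ by Lemma~\ref{l:Gronwall2}, so the constant $C=C(R)$ from Proposition~\ref{p:HS_euler} is a genuine $k$-independent constant. After this substitution the integrand coincides exactly with the one appearing in the proof of Proposition~\ref{p:discrete_equation} after its own first step, so the remaining work is to control the two quantities
\[
\begin{aligned}
I_{1}(x,\lambda)&\coloneqq\big|v_{\widetilde{\Psi}^{k}(t)}\big(x,\Lambda^{k}_{i+1}(t^{k}_{i+1},x,\lambda)\big)-v_{\Psi^{k}(t)}\big(X^{k}_{i+1}(t,x,\lambda),\Lambda^{k}_{i+1}(t,x,\lambda)\big)\big|,\\
I_{2}(x,\lambda)&\coloneqq\big\|\T_{\underline{\Psi}^{k}(t)}\big(x,\Lambda^{k}_{i+1}(t^{k}_{i+1},x,\lambda)\big)-\T_{\Psi^{k}(t)}\big(X^{k}_{i+1}(t,x,\lambda),\Lambda^{k}_{i+1}(t,x,\lambda)\big)\big\|_{\mathrm{BL}},
\end{aligned}
\]
exactly as in~\eqref{e:6} and~\eqref{e:10}. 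For $I_{1}$ one uses the triangle inequality, $(v_{1})$ for the change of arguments, $(v_{2})$ together with $W_{1}(\widetilde{\Psi}^{k}(t),\Psi^{k}(t))\le C\tau_{k}$ for the change of measure, and the sublinear bounds $(v_{3})$, $(\T_{1})$ to estimate the displacements $|X^{k}_{i+1}(t,x,\lambda)-x|$ and $\|\Lambda^{k}_{i+1}(t^{k}_{i+1},x,\lambda)-\Lambda^{k}_{i+1}(t,x,\lambda)\|_{\mathrm{BL}}$ by $C\tau_{k}$; the key point is that $\|\Lambda^{k}_{i+1}(t^{k}_{i+1},x,\lambda)-\Lambda^{k}_{i+1}(t,x,\lambda)\|_{\mathrm{BL}}\le\HS(\tilde\lambda,\lambda)\le C\tau_{k}$ now follows from~\eqref{e:equivalence} and~\eqref{e:Lipschitz} rather than from $(\T_{1})$ as in Section~\ref{s:scheme}. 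For $I_{2}$ one applies $(\T_{2})$ and reduces to the same displacement estimates plus $W_{1}(\underline{\Psi}^{k}(t),\Psi^{k}(t))\le C\tau_{k}$.

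Finally I would collect the three error contributions: the minimizing-movement deviation $C\tau_{k}(1+\tau_{k})\le 2C\tau_{k}$ for $k$ large (or $C\tau_{k}(1+T)$ uniformly, since $\tau_{k}=T/k\le T$), and the bounds $I_{1}\le C_{1}\tau_{k}$, $I_{2}\le C_{2}\tau_{k}$; integrating against $\nabla\varphi$ over $\Psi^{k}_{i}$ and bounding $\|\nabla\varphi\|_{\infty}\le\|\varphi\|_{C^{1}_{b}}$ gives $|\vartheta_{k}(\varphi)|\le C\|\varphi\|_{C^{1}_{b}}\tau_{k}$. The main obstacle is purely bookkeeping: one must verify that the constant $R$ from Lemma~\ref{l:Gronwall2}, hence the constant $C(R)$ in Proposition~\ref{p:HS_euler}, is independent of $k$ (which it is, precisely because the minimization step~\eqref{e:HS_step} always lands in $\Pp(U)$, with no smallness condition on $\tau_{k}$, unlike the explicit scheme), and that the $(1+\tau_{k})$ factor in~\eqref{e:approx_EL} does not spoil the first-order rate — both are harmless. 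No genuinely new estimate is needed beyond Propositions~\ref{p:HS_euler} and the assumptions $(v_{1})$--$(v_{3})$, $(\T_{1})$--$(\T_{2})$; the argument is a transcription of the proof of Proposition~\ref{p:discrete_equation} with~\eqref{e:approx_EL} inserted at the single point where the two schemes differ.
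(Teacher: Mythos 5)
Your proposal is correct and follows essentially the same route as the paper's proof: differentiate the push-forward, control the $\nabla_{\lambda}\varphi$-term via Proposition~\ref{p:HS_euler} applied with $\Psi=\Psi^{k}_{i}$ and $\tilde\lambda=\Lambda^{k}_{i+1}(t^{k}_{i+1},x,\lambda)$, and control the velocity mismatch $I_{1}$ and the remaining $\T$-mismatch by $(v_{1})$--$(v_{3})$, $(\T_{2})$, the displacement bounds through $\HS$ and~\eqref{e:equivalence}, and the $W_{1}$-estimates between $\widetilde\Psi^{k}(t)$, $\underline\Psi^{k}(t)$, and $\Psi^{k}(t)$. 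The only difference is cosmetic: the paper keeps the Euler--Lagrange correction inside $I_{2}=I_{2,1}+I_{2,2}$ (with $I_{2,1}$ being exactly~\eqref{e:approx_EL}), whereas you apply~\eqref{e:approx_EL} up front and then bound the residual, which coincides with the paper's $I_{2,2}$.
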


\begin{proof}
Along the proof we denote by~$C$ a generic positive constant independent of~$i$,~$k$,~$t$, and~$\varphi$, that may vary from line to line.

We follow step by step the proof of Proposition~\ref{p:discrete_equation}. For every test function $\varphi \in C_{b}^{1}(\R^{d} \times \F(U))$ and every $t \in (t^{k}_{i}, t^{k}_{i+1})$, by definition of~$\Psi^{k}(t)$ we have that
\begin{equation}\label{e:121}
\begin{split}
\frac{\di}{\di t} & \int_{Y} \varphi (x, \lambda) \, \di \Psi^{k}(t)(x, \lambda) = \frac{\di}{\di t} \int_{Y} \varphi \big ( X^{k}_{i+1}( t, x, \lambda ) , \Lambda^{k}_{i+1} ( t, x, \lambda) \big) \, \di \Psi^{k}_{i} (x, \lambda) 
\\
&
= \int_{Y} \nabla_{x} \varphi \big ( X^{k}_{i+1}( t, x, \lambda ) , \Lambda^{k}_{i+1} ( t, x, \lambda) \big) \cdot v_{\widetilde{\Psi}^{k} (t)} \big(  x , \Lambda^{k}_{i+1} (t^{k}_{i+1}, x, \lambda) \big) \, \di \Psi^{k}_{i} (x, \lambda)
\\
&
\quad + \int_{Y} \nabla_{\lambda} \varphi  \big ( X^{k}_{i+1}( t, x, \lambda ) , \Lambda^{k}_{i+1} ( t, x, \lambda) \big)\cdot\dot{\Lambda}^{k}_{i+1}(t, x, \lambda) \, \di \Psi^{k}_{i} (x, \lambda)
\\
&
=  \int_{Y} \nabla_{x} \varphi \big ( X^{k}_{i+1}( t, x, \lambda ) , \Lambda^{k}_{i+1} ( t, x, \lambda) \big) \cdot v_{\widetilde{\Psi}^{k} (t)} \big(  x , \Lambda^{k}_{i+1} (t^{k}_{i+1}, x, \lambda) \big) \, \di \Psi^{k}_{i} (x, \lambda)
\\
&
\quad + \int_{Y} \nabla_{\lambda} \varphi  \big ( X^{k}_{i+1}( t, x, \lambda ) , \Lambda^{k}_{i+1} ( t, x, \lambda) \big)\cdot\frac{\big( \Lambda^{k}_{i+1}(t^{k}_{i+1}, x, \lambda) - \lambda \big) }{\tau_{k}}  \, \di \Psi^{k}_{i} (x, \lambda)\,.
\end{split}
\end{equation}

 In order to deduce~\eqref{e:approx_equation2} from~\eqref{e:121}, we need to estimate
\begin{eqnarray*}
&& \displaystyle I_{1} (x, \lambda) \coloneqq \Big| v_{\widetilde{\Psi}^{k}(t)} \big(  x , \Lambda^{k}_{i+1} (t^{k}_{i+1}, x, \lambda) \big) - v_{\Psi^{k}(t)} \big(  X^{k}_{i+1} (t, x, \lambda), \Lambda^{k}_{i+1} (t , x, \lambda) \big) \Big|\,, \\[2mm]
&& \displaystyle I_{2} (x, \lambda) \coloneqq \bigg \| \frac{\big( \Lambda^{k}_{i+1}(t^{k}_{i+1}, x, \lambda) - \lambda \big) }{\tau_{k}} - \T_{\Psi^{k}(t)} ( X^{k}_{i+1} (t, x, \lambda) , \Lambda^{k}_{i+1}(t, x, \lambda) )  \bigg \| _{\mathrm{BL}} 
\end{eqnarray*}
for $(x, \lambda) \in \spt\,  \Psi^{k}_{i} \subseteq \B^{Y}_{R}$, where~$R$ has been determined in Lemma~\ref{l:Gronwall2}.

Let us start with~$I_{1}$. By triangle inequality we have
\begin{equation}\label{e:122}
\begin{split}
I_{1} (x, \lambda)   & \leq   \Big| v_{\widetilde{\Psi}^{k}(t)} \big(  x , \Lambda^{k}_{i+1} (t^{k}_{i+1}, x, \lambda) \big) -  v_{\widetilde{\Psi}^{k}(t)} \big(  X^{k}_{i+1} (t, x, \lambda), \Lambda^{k}_{i+1} (t, x, \lambda) \big) \Big | 
\\
&
 \qquad + \Big |  v_{\widetilde{\Psi}^{k}(t)} \big(  X^{k}_{i+1} (t, x, \lambda), \Lambda^{k}_{i+1} (t, x, \lambda) \big) - v_{\Psi^{k}(t)} \big(  X^{k}_{i+1} (t, x, \lambda), \Lambda^{k}_{i+1} (t , x, \lambda) \big) \Big| 
 \\
 &
\vphantom{\Big|} =: I_{1, 1} (x, \lambda)  + I_{1, 2} (x, \lambda)  \,.
 \end{split}
\end{equation}
Since $\widetilde{\Psi}^{k} (t) \in \Pp( \B^{Y}_{R})$, hypothesis~$(v_{1})$ implies that
\begin{displaymath}
\begin{split}
I_{1,1} (x, \lambda) & \leq \vphantom{\int} L_{v, R} \big( | X^{k}_{i+1}(t, x, \lambda) - x| + \|  \Lambda^{k}_{i+1} (t^{k}_{i+1}, x, \lambda) -  \Lambda^{k}_{i+1} (t, x, \lambda) \|_{\mathrm{BL}} \big)
\\
&
\leq L_{v, R}\bigg( \int_{t^{k}_{i}}^{t} \big | v_{\widetilde{\Psi}^{k}(\tau)} \big (x, \Lambda^{k}_{i+1}(t^{k}_{i+1}, x, \lambda) \big) \big | \, \di \tau +  \int_{t}^{t^{k}_{i+1}} \bigg \| \frac{\Lambda^{k}_{i+1} ( t^{k}_{i+1}, x, \lambda) - \lambda}{\tau_{k}} \bigg  \|_{\mathrm{BL}} \, \di \tau \bigg)  \,.
\end{split}
\end{displaymath}
By~$(v_3)$,  Lemma~\ref{l:Gronwall2}, and Proposition~\ref{p:HS_euler}, we can continue with
\begin{equation}\label{e:123}
\begin{split}
I_{1,1} (x, \lambda) & \leq L_{v, R} \bigg( M_{v} \int_{t^{k}_{i}}^{t} \big( 1 + |x| + \| \Lambda^{k}_{i+1}(t^{k}_{i+1}, x, \lambda) \|_{\mathrm{BL}} + m_{1} ( \widetilde{\Psi}^{k} (\tau)) \big) \, \di \tau 
\\
&
\phantom{ \leq L_{v, R} \bigg(} + \frac{2}{\tau_{k}} \int_{t}^{t^{k}_{i+1}} \HS \big( \Lambda^{k}_{i+1}(t^{k}_{i+1}, x, \lambda), \lambda \big) \, \di \tau \bigg)  \leq C \tau_{k} \,.
\end{split}
\end{equation}

As for $I_{1,2}$, thanks to assumption~$(v_2)$ and to Lemma~\ref{l:Gronwall2} we get
\begin{align*}
I_{1,2}& (x, \lambda)  \leq \vphantom{\int} L_{v, R} W_{1} ( \widetilde{\Psi}^{k}(t) , \Psi^{k}(t) )
\\
&
= L_{v, R} \, \sup_{\eta \in \mathrm{Lip}_{1} ( Y)} \bigg\{ \int_{Y} \eta (x' , \lambda' ) \, \di (\widetilde{\Psi}^{k}(t) - \Psi^{k}(t) ) ( x' , \lambda' )  \bigg\}
\\
&
= L_{v, R} \, \sup_{\eta \in \mathrm{Lip}_{1} ( Y)} \bigg \{ \int_{Y} \eta (x, \Lambda^{k}_{i+1} ( t^{k}_{i+1}, x' , \lambda' ) ) - \eta (X^{k}_{i+1} ( t, x' , \lambda' ), \Lambda^{k}_{i+1} (t, x' , \lambda' ) )  \, \di \Psi^{k}_{i} ( x' , \lambda' ) 
\bigg\}
\\
&
\leq  L_{v, R} \int_{Y} \Big( | x  - X^{k}_{i+1} ( t, x' , \lambda' ) | + \| \Lambda^{k}_{i+1} ( t^{k}_{i+1}, x' , \lambda' ) ) -  \Lambda^{k}_{i+1} (t, x' , \lambda' ) \|_{\mathrm{BL}} \Big)  \, \di \Psi^{k}_{i} ( x' , \lambda' )
\\
&
\leq  L_{v, R} \int_{Y} \bigg( \int_{t^{k}_{i}}^{t} \big | v_{\widetilde{\Psi}^{k}(\tau)} ( x,  \Lambda^{k}_{i+1} ( t^{k}_{i+1}, x' , \lambda' ) )  \big | \, \di \tau  
\\ 
& \qquad\qquad\qquad + \int_{t}^{t^{k}_{i+1}} \bigg \| \frac{\Lambda^{k}_{i+1} ( t^{k}_{i+1}, x' , \lambda' ) - \lambda' }{\tau_{k}} \bigg  \|_{\mathrm{BL}} \, \di \tau \bigg) \, \di \Psi^{k}_{i}( x' , \lambda' )
\\
&
\leq  L_{v, R} \, \tau_{k}  \int_{Y}  \bigg( \big | v_{\widetilde{\Psi}^{k}(\tau)} ( x, \Lambda^{k}_{i+1} ( t^{k}_{i+1}, x' , \lambda' ) )  \big | + \bigg \| \frac{\Lambda^{k}_{i+1} ( t^{k}_{i+1}, x' , \lambda' ) - \lambda}{\tau_{k}} \bigg  \|_{\mathrm{BL}} \bigg) \, \di \Psi^{k}_{i}( x' , \lambda' ) \,.
\end{align*}
Arguing as in~\eqref{e:123} we infer that
\begin{equation}\label{e:124}
I_{1,2} (x, \lambda)  \leq C\, \tau_{k} \qquad \text{for every $(x, \lambda) \in \spt\, \Psi^{k}_{i}$} \,.
\end{equation}
Combining~\eqref{e:122}--\eqref{e:124} we get
\begin{equation}\label{e:125}
I_{1} (x, \lambda) \leq C \, \tau_{k} \qquad \text{for every $(x, \lambda) \in \spt\, \Psi^{k}_{i}$} \,.
\end{equation}

Let us now estimate~$I_{2}$. By triangle inequality we have
\begin{equation}
\begin{split}
I_{2} (x, \lambda)  & \leq    \bigg \| \frac{\big( \Lambda^{k}_{i+1}(t^{k}_{i+1}, x, \lambda) - \lambda \big) }{\tau_{k}} - \T_{\Psi^{k}_{i}} ( x , \Lambda^{k}_{i+1}(t^{k}_{i+1} , x, \lambda) )  \bigg \| _{\mathrm{BL}} 
\\
&
 \quad \vphantom{\int} + \big \| \T_{\Psi^{k}(t) } \big( X^{k}_{i+1} (t, x, \lambda) , \Lambda^{k}_{i+1}(t, x, \lambda) \big) - \T_{\Psi^{k}_{i}} \big( x , \Lambda^{k}_{i+1}(t^{k}_{i+1}, x, \lambda) \big)  \big \| _{\mathrm{BL}} \label{e:126}
 \\
 &
\vphantom{\int} =: I_{2, 1} (x, \lambda)  + I_{2, 2} (x, \lambda)  \,.
 \end{split}
\end{equation}

By Proposition~\ref{p:HS_euler} we have that
\begin{equation}\label{e:127}
I_{2,1} (x, \lambda)  \leq C  \,\tau_{k}  \qquad \text{for every $(x, \lambda) \in \spt\, \Psi^{k}_{i}$} \,.
\end{equation}
 By~$(\T_{2})$,~$(v_{3})$, Lemma~\ref{l:Gronwall2}, and Proposition~\ref{p:HS_euler}, and repeating the arguments of~\eqref{e:124} we get
\begin{align}
\!\! I_{2,2}(x, \lambda)  & \leq L_{\T, R} \bigg( \int_{t^{k}_{i}}^{t} \big| v_{\widetilde{\Psi}^{k}(\tau)} \big( x, \Lambda^{k}_{i+1} ( t^{k}_{i+1}, x, \lambda ) \big) \big| \, \di \tau \nonumber
\\
&
 \quad + \int_{t}^{t^{k}_{i+1}}  \bigg \| \frac{\Lambda^{k}_{i+1} ( t^{k}_{i+1}, x, \lambda) - \lambda}{\tau_{k}} \bigg  \|_{\mathrm{BL}} \di \tau   + W_{1} ( \Psi^{k}(t), \Psi^{k}_{i}) \bigg) \nonumber
\\
&
\leq L_{\T, R} \bigg( \int_{t^{k}_{i}}^{t}  \big| v_{\widetilde{\Psi}^{k}(\tau)} \big( x, \Lambda^{k}_{i+1} ( t^{k}_{i+1}, x, \lambda ) \big) \big| \, \di \tau + \!\int_{t}^{t^{k}_{i+1}} \! \bigg \| \frac{\Lambda^{k}_{i+1} ( t^{k}_{i+1}, x, \lambda) - \lambda}{\tau_{k}} \bigg  \|_{\mathrm{BL}} \di \tau  \label{e:129}
\\
&
 \quad + \int_{Y} \int_{t^{k}_{i}}^{t} \bigg( \big | v_{\widetilde{\Psi}^{k}(\tau)} \big( x', \Lambda^{k}_{i+1} (t^{k}_{i+1}, x', \lambda') \big)\big| + \bigg \| \frac{\Lambda^{k}_{i+1} (t^{k}_{i+1}, x', \lambda') - \lambda' }{\tau_{k}} \bigg\|_{\mathrm{BL}} \bigg)  \di \tau \, \di \Psi^{k}_{i}(x', \lambda') \nonumber
\\
&
\vphantom{\int} \leq 4 L_{\T, R}\, M_{v} (1 + R) \tau_{k} + 4\, \HS \big( \Lambda^{k}_{i+1} ( t^{k}_{i+1}, x, \lambda) , \lambda \big) \leq C\, \tau_{k}\,. \nonumber
\end{align}
 Combining~\eqref{e:126}--\eqref{e:129} we obtain that
\begin{equation}\label{e:130}
I_{2} (x, \lambda) \leq C \, \tau_{k} \qquad \text{for every $(x, \lambda) \in \spt\, \Psi^{k}_{i}$} \,.
\end{equation}
Equality~\eqref{e:approx_equation2} follows from~\eqref{e:125} and~\eqref{e:130} as in the proof of Proposition~\ref{p:discrete_equation}.
 \end{proof}

Finally, we prove the convergence of the sequence~$\Psi^{k}$ to the solution~$\Psi \in C([0,T]; \Pp_{1}(Y))$ of the continuity equation~\eqref{e:cont_eq}.

\begin{theorem}\label{t:limit2}
Let $\widehat{\Psi} \in \Pp_{c}(Y)$. Then, the sequence of curves~$\Psi^{k} \colon [0,T] \to \Pp_{1}(Y)$ converges to the unique solution~$\Psi \in C([0,T]; \Pp_{1}(Y))$ of~\eqref{e:cont_eq} in~$W_{1}$, uniformly with respect to~$t \in [0,T]$.
\end{theorem}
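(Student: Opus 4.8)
The argument follows verbatim the scheme of the proof of Theorem~\ref{t:limit}, with Proposition~\ref{p:discrete_equation2} and Lemma~\ref{l:Gronwall2} playing the roles of Proposition~\ref{p:discrete_equation} and Lemma~\ref{l:Gronwall_bound}, respectively. Existence and uniqueness of~$\Psi \in C([0,T];(\Pp_{1}(Y),W_{1}))$ solving~\eqref{e:cont_eq} follow from~\cite[Theorem~3.5]{MorSol19}, since~$\T_{\Psi}$ defined in~\eqref{e:new_T} satisfies $(\T_0)$--$(\T_3)$ by~\cite[Proposition~5.8]{MorSol19}. Fixing $\phi \in C^{1}_{b}([0,T]\times\overline{Y})$, Proposition~\ref{p:discrete_equation2} yields, after integration in time, the approximate weak formulation
\begin{equation*}
\begin{split}
\int_{Y} \phi(t,x,\lambda)\,\di\Psi^{k}(t) - \int_{Y}\phi(0,x,\lambda)\,\di\widehat{\Psi} = {}& \int_{0}^{t}\!\!\int_{Y}\partial_{t}\phi\,\di\Psi^{k}(\tau)\,\di\tau \\
& + \int_{0}^{t}\!\!\int_{Y}\nabla\phi\cdot b_{\Psi^{k}(\tau)}\,\di\Psi^{k}(\tau)\,\di\tau + \int_{0}^{t}\theta_{k}(\phi(\tau,\cdot,\cdot))\,\di\tau,
\end{split}
\end{equation*}
with $|\theta_{k}(\phi(\tau,\cdot,\cdot))| \leq C\tau_{k}\|\phi\|_{C^{1}_{b}([0,T]\times\overline{Y})}$, so that the last term is $O(\tau_{k})$.

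Next I would establish equi-compactness of the family~$\{\Psi^{k}\}$. By Lemma~\ref{l:Gronwall2} the supports lie in a fixed ball~$\B^{Y}_{R}$, so it remains to prove an equi-Lipschitz bound $W_{1}(\Psi^{k}(s),\Psi^{k}(t)) \leq L|s-t|$ with~$L$ independent of~$k$; by the triangle inequality it suffices to treat $s,t\in[t^{k}_{i},t^{k}_{i+1}]$. Here the only new point with respect to Theorem~\ref{t:limit} is that the discrete velocity of the label component is now $(\Lambda^{k}_{i+1}(t^{k}_{i+1},\cdot,\cdot)-\lambda)/\tau_{k}$ rather than~$\T_{\Psi^{k}_{i}}$; but estimate~\eqref{e:Lipschitz} of Proposition~\ref{p:HS_euler}, namely $\HS(\tilde\lambda,\lambda)\leq C\tau_{k}$, together with~\eqref{e:equivalence}, gives $\|(\Lambda^{k}_{i+1}(t^{k}_{i+1},\cdot,\cdot)-\lambda)/\tau_{k}\|_{\mathrm{BL}}\leq 2C$, so the same computation as in the proof of Proposition~\ref{p:discrete_equation2} delivers a uniform bound. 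By Ascoli--Arzel\`a there is $\overline{\Psi}\in C([0,T];(\Pp_{1}(Y),W_{1}))$ with $W_{1}(\Psi^{k}(t),\overline{\Psi}(t))\to 0$ uniformly in~$t$ along a subsequence, $\overline{\Psi}(0)=\widehat{\Psi}$, and $\overline{\Psi}(t)\in\Pp(\B^{Y}_{R})$.

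Then I would pass to the limit in the approximate weak formulation exactly as in Theorem~\ref{t:limit}: the first two lines converge by narrow convergence of~$\Psi^{k}(\tau)$ and the error term vanishes. For the velocity term I would split as in~\eqref{e:12}, controlling the first piece by $\|b_{\Psi^{k}(\tau)}-b_{\overline{\Psi}(\tau)}\|_{\overline{Y}}\leq (L_{v,R}+L_{\T,R})W_{1}(\Psi^{k}(\tau),\overline{\Psi}(\tau))$ via~$(v_2)$, $(\T_2)$ and~\cite[Proposition~3.2]{MorSol19}, and the second piece by dominated convergence using the narrow convergence together with the uniform bounds from~$(v_3)$, $(\T_1)$. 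This shows $\overline{\Psi}$ is a solution of~\eqref{e:cont_eq} with initial datum~$\widehat{\Psi}$; by the uniqueness from~\cite[Theorem~3.5]{MorSol19} we get $\overline{\Psi}=\Psi$, and since the limit is independent of the subsequence the whole sequence~$\Psi^{k}$ converges to~$\Psi$ in~$W_{1}$ uniformly on~$[0,T]$. The only genuinely new ingredient compared with Theorem~\ref{t:limit} is the replacement of the explicit increment by the minimizing movement, and its compatibility is precisely what Proposition~\ref{p:HS_euler} and Proposition~\ref{p:discrete_equation2} provide; no further obstacle arises.
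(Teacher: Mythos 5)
Your proof is correct and matches the paper's approach: existence/uniqueness from \cite[Theorem~3.5]{MorSol19}, the equi-Lipschitz bound obtained via Lemma~\ref{l:Gronwall2}, Proposition~\ref{p:HS_euler} (with~\eqref{e:equivalence} giving $\|(\tilde\lambda-\lambda)/\tau_k\|_{\mathrm{BL}}\leq 2\HS(\tilde\lambda,\lambda)/\tau_k\leq 2C$), and then limit passage exactly as in Theorem~\ref{t:limit} using Proposition~\ref{p:discrete_equation2} in place of Proposition~\ref{p:discrete_equation}. The paper's write-up is identical in substance, merely more terse.
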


\begin{proof}
Since the operator~$\T_{\Psi}$ defined in~\eqref{e:new_T} satisfies the property~$(\T_{0})$--$(\T_{3})$, we only have to check that the sequence~$\Psi^{k}$ is compact in~$C([0,T]; \Pp_{1}(Y))$. The rest of the proof works as for Theorem~\ref{t:limit} using Proposition~\ref{p:discrete_equation2} instead of Proposition~\ref{p:discrete_equation}.

In view of Lemma~\ref{l:Gronwall2}, it is enough to show that~$\Psi^{k}$ is equi-Lipschitz with respect to~$W_{1}$. Let us fix~$k \in \mathbb{N}$,~$i \in \{0, \ldots, k-1\}$, and~$s \leq t \in [t^{k}_{i}, t^{k}_{i+1}]$. Then,
\begin{displaymath}
\begin{split}
W_{1} & ( \Psi^{k}(t) , \Psi^{k}(s))  = \sup\, \left \{ \int_{Y} \eta (x, \lambda) \, \di ( \Psi^{k}(t) - \Psi^{k}(s))(x, \lambda): \, \eta \in \mathrm{Lip}_{1}(Y) \right \}
\\
&
\leq \int_{Y} \Big( \big| X^{k}_{i+1}(t, x, \lambda) - X^{k}_{i+1} (s, x, \lambda) \big| + \big\| \Lambda^{k}_{i+1}(t, x, \lambda) - \Lambda^{k}_{i+1}(s, x, \lambda) \big\|_{\mathrm{BL}} \Big) \, \di \Psi^{k}_{i}(x, \lambda)
\\
&
\leq \int_{Y} \bigg( \int_{s}^{t} \big| v_{\widetilde{\Psi}^{k}(\tau)} \big(x, \Lambda^{k}_{i+1} (\tau, x, \lambda) \big) \big|\, \di \tau + \int_{s}^{t} \bigg\| \frac{\Lambda^{k}_{i+1}( t^{k}_{i+1}, x, \lambda) - \lambda}{\tau_{k}} \bigg\|_{\mathrm{BL}} \, \di \tau \bigg)\, \di \Psi^{k}_{i}(x, \lambda) \,.
\end{split}
\end{displaymath}
Therefore, by~$(v_2)$, Lemma~\ref{l:Gronwall2}, and Proposition~\ref{p:HS_euler} we get
\begin{displaymath}
W_{1} ( \Psi^{k}(t) , \Psi^{k}(s)) \leq 2M_{v} (1 + R) |t - s| + 2 |t - s| \int_{Y}\frac{\HS \big(( \Lambda^{k}_{i+1}(t^{k}_{i+1}, x, \lambda) , \lambda \big) }{\tau_{k}} \, \di \Psi^{k}_{i}(x, \lambda) \leq C | t - s|
\end{displaymath}
for some positive constant~$C$ independent of~$k$ and~$t$.
\end{proof}


\section{Reversible Markov chains}\label{s:markov}

In this section we show how to adapt the scheme developed in Section~\ref{s:alternative} to a reversible Markov chain on $n$ states. In particular, we will prove the convergence of such scheme for short time.

For fixed $n \in \mathbb{N}$, we consider the set of strategies
\begin{displaymath}
\Lambda_n \coloneqq \bigg\{ \lambda = (\lambda_{1}, \ldots, \lambda_{n}) \in \R^{n} : \, \lambda_{h} >0, \ \sum_{h=1}^{n} \lambda_{h} = 1 \bigg\} \,.
\end{displaymath}
In the notation of Sections~\ref{s:scheme} and~\ref{s:alternative}, the closure~$\overline{\Lambda}_{n}$ can be identified with the set of probability measures~$\Pp(U)$ for~$U \coloneqq \{ e_{h}: \, h = 1, \ldots, n\}$,~$e_{h}$ being the elements of the canonical basis of~$\R^n$. Keeping the notation of the previous sections, we set $Y \coloneqq \R^{d} \times \overline{\Lambda}_{n}$. Furthermore, we define
\begin{align*}
& \Lambda_{n}^{\delta} \coloneqq \{ \lambda \in \Lambda_{n} : \, \lambda_{h} \geq \delta \} \quad \text{for every $\delta>0$}\,, \qquad\R^{n}_{0} \coloneqq \bigg\{ \xi \in \R^{n}: \, \sum_{h = 1}^{n} \xi_{h} = 0 \bigg\}\,,\\
& \B^{Y}_{R, \delta} \coloneqq \B^{Y}_{R} \cap \big( \R^{d} \times \Lambda^{\delta}_{n} \big) \quad \text{for $\delta, \, R > 0$}\,.
\end{align*}

A Markov chain is characterized by a matrix~$\Q \in \mathbb{M}^{n}$, whose element~$\Q_{h \ell} \geq 0$, $h\neq \ell$, indicates the rate of moving from the state~$\ell$ to the state~$h$. In our setting, we consider a more general map~$\mathcal{Q} \colon \R^{d} \times \Pp_{1} (Y) \to \mathbb{M}^{n}$ satisfying the following properties:
\begin{itemize}
\item [$(\Qq_{0})$] for every $(x, \Psi) \in \R^{d} \times \Pp_{1}(Y)$ and every $h, \ell = 1, \ldots, n$, $\Qq_{h \ell}(x, \Psi) \geq 0$ for~$h \neq \ell$, and $\Qq_{hh}(x, \Psi) = - \sum_{\ell \neq h} \Qq_{\ell h} (x, \Psi)$; 

\item[$(\Qq_{1})$] for every $(x, \Psi) \in \R^{d} \times \Pp_{1}(Y)$, $\Qq ( x, \Psi)$ is \emph{reversible}, that is, there exists a unique $\sigma= \sigma (x, \Psi) \in \Lambda_{n}$ such that
\begin{displaymath}
\Qq_{h \ell} ( x, \Psi ) \sigma_{\ell} = \Qq_{\ell h} (x, \Psi) \sigma_{h} \qquad \text{for every $h, \ell = 1, \ldots, n$}\,;
\end{displaymath}

\item[$(\Qq_{2})$] $\Qq$ is locally Lipschitz, that is, for every $R>0$ there exists $L_{\Qq, R}>0$ such that for every $x_{1}, x_{2} \in \B_{R}$ and every $\Psi_{1}, \Psi_{2} \in \Pp (\B^{Y}_{R})$
\begin{displaymath}
| \Qq( x_{1}, \Psi_{1}) - \Qq (x_{2}, \Psi_{2}) | \leq L_{\Qq, R} \big( | x_{1} - x_{2} | + W_{1} (\Psi_{1}, \Psi_{2}) \big)\,;
\end{displaymath}

\item[$(\Qq_{3})$] there exists $M_{\Qq} > 0$ such that for every $x \in \R^{d}$ and every $\Psi \in \Pp_{1}(Y)$
\begin{displaymath}
| \Qq ( x, \Psi) | \leq M_{\Qq} \big( 1 + |x| + m_{1} ( \Psi ) \big) \,.
\end{displaymath}
\end{itemize}

\begin{remark}
We remark that~$(\Qq_{1})$ is always satisfied, for instance, when~$\Qq (x, \Psi)$ is a tridiagonal matrix for every $x \in \R^{d}$ and~$\Psi \in \Pp_{1}(Y)$, see, e.g.,~\cite[Section~5.1]{Mielke}.
\end{remark}

\begin{remark}\label{r:QT}
We notice that if for every $y = (x, \lambda)  \in Y$ and every $\Psi \in \Pp_{1}(Y)$ we set $\T_{\Psi} (y)\coloneqq \Qq ( x, \Psi) \lambda$, then the operator~$\T \colon Y \times \Pp_{1}(Y) \to \Lambda_{n}$ satisfies properties~$(\T_{0})$--$(\T_{3})$ of Section~\ref{s:hp}.
\end{remark}

Following~\cite{Maas,Mielke}, for every $y = (x,\lambda) \in \R^{d} \times \Lambda_{n}$ and every $\Psi \in \Pp_{1}(Y)$ we consider the \emph{entropy} $E$ and the \emph{Onsager matrix} $K$
\begin{align}
& E ( x, \lambda, \Psi) \coloneqq \sum_{h  = 1}^{n} \lambda_{h} \ln \bigg( \frac{\lambda_{h}}{\sigma_{h} (x, \Psi)} \bigg)\,, \label{e:E} \\[1mm]
& K(x, \lambda, \Psi) \coloneqq \sum_{\ell = 2}^{n} \sum_{h = 1}^{\ell - 1} \Qq_{h \ell}(x, \Psi) \sigma_{\ell}(x, \Psi) \, \Phi \bigg( \frac{\lambda_{h}}{\sigma_{h}( x, \Psi) } , \frac{\lambda_{\ell}}{\sigma_{\ell}(x, \Psi)} \bigg) ( e_{h} - e_{\ell}) \otimes (e_{h} - e_{\ell}) \,, \label{e:K}
\end{align}
where $\Phi \colon [0,+\infty) \times [0,+\infty) \to [0,+\infty)$ is defined as
\begin{displaymath}
\Phi ( a, b ) \coloneqq \frac{ a - b}{\ln a - \ln b }\quad \text{for $a \neq b$,} \qquad \Phi (a, a) = a\,,
\end{displaymath}
so that $\Phi$ is analytic. Clearly, $E(x, \cdot, \Psi)$ and $K(x, \cdot, \Psi)$ can be extended to~$\overline{\Lambda}_{n}$ by continuity. Moreover, we notice that for every $(x, \lambda) \in \R^{d} \times \Lambda_{n}$ and every $\Psi \in \Pp_{1}(Y)$, the matrix~$K(x, \lambda, \Psi)$ is symmetric and positive definite when acting on~$\R^{n}_{0}$. We denote by~$G(x, \lambda, \Psi)$ its inverse on~$\R^{n}_{0}$. The matrix~$G$ is a Riemannian tensor on~$\R^{n}_{0}$. For every $x \in \R^{d}$ and $\Psi \in \Pp_{1}(Y)$ we define the Riemannian metric $\sfd_{(x, \Psi)} \colon \Lambda_{n} \times \Lambda_{n} \to [0,+\infty)$ as
 \begin{align}\label{e:d}
\sfd_{(x, \Psi)} ( \lambda_{1}, \lambda_{2} ) \coloneqq \inf \, \bigg\{ \int_{0}^{1} \left\langle G(x, \rho(s), \Psi) \rho'(s) , \rho'(s) \right\rangle^{\frac12}\di s :&\,  \rho \in C^{1} ( [0 , 1 ] ; \Lambda_{n} ) , 
\\
&
\, \rho (0) = \lambda_{1} , \, \rho (1) = \lambda_{2} \bigg\} \,, \nonumber
\end{align}
for every $\lambda_{1}, \lambda_{2} \in \Lambda_{n}$. The metric $\sfd_{(x, \Psi)}$ can be extended to~$\overline{\Lambda}_{n} \times \overline{\Lambda}_{n}$ in a continuous way.

In the next two lemmas we collect some properties of~$E$,~$K$,~$G$, and~$\sfd_{(x,\Psi)}$.

\begin{lemma}\label{l:estimate}
Let~$\delta, R > 0$. Then, the following facts hold:
\begin{itemize}
\item[$(i)$] there exists a positive constant~$\eta = \eta (R)$ such that $\sigma_{h} (x, \Psi) \geq \eta$ for every $x \in \B_{R}$, every $\Psi \in \Pp(\B^{Y}_{R})$, and every $h = 1, \ldots, n$;

\item[$(ii)$] there exist two positive constants~$c_{1} = c_{1}( R)$ and~$ c_{2} =  c_{2}(R)$ such that for every $x \in \B_{R}$, every $\lambda \in \Lambda_{n}$, every $\Psi \in \Pp(\B^{Y}_{R})$, and every $\mu \in \R^{n}_{0}$,
\begin{align}
& c_{1} | \mu | ^{2} \leq \left\langle G(x, \lambda, \Psi) \mu, \mu \right\rangle \,, \label{e:G1}\\[1mm]
& \left\langle K(x, \lambda, \Psi) \mu, \mu \right\rangle \leq c_{2}  | \mu | ^{2}\, \label{e:K1};
\end{align}

\item[$(iii)$] there exist two positive constants~$c_{3} = c_{3}( \delta, R)$ and $c_{4} = c_{4}(\delta, R)$ such that for every $(x, \lambda) \in \B^{Y}_{R, \delta}$, every $\Psi \in \Pp(\B^{Y}_{R})$, and every $\mu \in \R^{n}_{0}$,
\begin{align}
&  \left\langle G(x, \lambda, \Psi) \mu, \mu \right\rangle \leq c_{3} | \mu | ^{2}\,, \label{e:G2}\\[1mm]
&  c_{4}  | \mu | ^{2} \leq \left\langle K(x, \lambda, \Psi) \mu, \mu \right\rangle \, \label{e:K2};
\end{align}

\item[$(iv)$] $G(x, \cdot, \Psi)$ is Lipschitz continuous in~$\Lambda^{\delta}_{n}$, uniformly with respect to~$x \in \B_{R}$ and~$\Psi \in \Pp(\B^{Y}_{R})$, that is, there exists $L_{G, \delta, R}>0$ such that for every $\lambda_{1}, \lambda_{2} \in \Lambda^{\delta}_{n}$
\begin{equation}\label{e:G3}
| G(x, \lambda_{1}, \Psi) - G(x, \lambda_{2}, \Psi) | \leq L_{G, \delta, R} | \lambda_{1} - \lambda_{2}| \,;
\end{equation}

\item[$(v)$] $E (x, \cdot, \Psi)$ is Lipschitz continuous in~$\Lambda^{\delta}_{n}$, uniformly with respect to~$x \in \B_{R}$ and~$\Psi \in \Pp(\B^{Y}_{R})$, namely, there exists $L_{E, \delta, R}>0$ such that for every $\lambda_{1}, \lambda_{2} \in \Lambda^{\delta}_{n}$
\begin{equation}\label{e:E1}
| E(x, \lambda_{1}, \Psi) - E(x, \lambda_{2}, \Psi) | \leq L_{E, \delta, R} | \lambda_{1} - \lambda_{2}| \,;
\end{equation}

\item[$(vi)$] for every $\alpha \in (0,1)$ the energy~$E(x, \cdot, \Psi)$ is $\alpha$-H\"older continuous in~$\overline{\Lambda}_{n}$, uniformly with respect to~$x \in \B_{R}$ and~$\Psi \in \Pp (\B^{Y}_{R})$, that is, for every $\alpha \in (0,1)$ there exists $C_{E, \alpha, R}>0$ such that for every $\lambda_{1}, \lambda_{2} \in \overline{\Lambda}_{n}$
\begin{equation}\label{e:E2}
| E(x, \lambda_{1}, \Psi) - E(x, \lambda_{2}, \Psi) | \leq C_{E, \alpha, R} | \lambda_{1} - \lambda_{2}|^{\alpha} \,.
\end{equation}
\end{itemize}
\end{lemma}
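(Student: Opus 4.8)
Throughout, the relevant compact parameter sets are $\B_{R}\times\Pp(\B^{Y}_{R})$ and $\B^{Y}_{R,\delta}\times\Pp(\B^{Y}_{R})$: these are compact since $U$ is compact (so $\Pp(\B^{Y}_{R})$ is compact for $W_{1}$), $\B^{Y}_{R}$ is compact, and $\Lambda^{\delta}_{n}$ is closed and bounded. I would prove the six assertions in the stated order, each constant depending only on the indicated data through these sets. For part $(i)$, the key point is that $(x,\Psi)\mapsto\sigma(x,\Psi)$ is continuous: by $(\Qq_{2})$ the generator depends continuously on $(x,\Psi)$, and by $(\Qq_{1})$ the detailed-balance measure is the unique element of $\Lambda_{n}$ with that property; a standard compactness/contradiction argument (any limit of $\sigma(x_{j},\Psi_{j})$ satisfies the limiting detailed-balance relations and, by the strict positivity forced by uniqueness, again lies in $\Lambda_{n}$, hence equals $\sigma$ of the limit) gives continuity, and then $\eta(R):=\min_{h}\min_{\B_{R}\times\Pp(\B^{Y}_{R})}\sigma_{h}>0$ because $\sigma_{h}$ is continuous and strictly positive on a compact set.

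Parts $(ii)$ and $(iii)$ are dual and I would treat them together. For the upper bound on $K$, I use that the logarithmic mean satisfies $\Phi(a,b)\le\tfrac{a+b}{2}$, so by \eqref{e:K} and $(i)$,
\[
\Phi\Big(\tfrac{\lambda_{h}}{\sigma_{h}},\tfrac{\lambda_{\ell}}{\sigma_{\ell}}\Big)\le\tfrac{1}{2\eta}(\lambda_{h}+\lambda_{\ell})\le\tfrac{1}{2\eta},
\]
while $\Qq_{h\ell}(x,\Psi)\sigma_{\ell}(x,\Psi)\le|\Qq(x,\Psi)|\le M_{\Qq}(1+2R)$ by $(\Qq_{3})$ and $|(e_{h}-e_{\ell})\otimes(e_{h}-e_{\ell})|\le2$; summing the finitely many rank-one terms gives $\langle K\mu,\mu\rangle\le c_{2}(R)|\mu|^{2}$, which is \eqref{e:K1}. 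Since $G(x,\lambda,\Psi)$ is the inverse of $K(x,\lambda,\Psi)$ on $\R^{n}_{0}$, the largest eigenvalue of $K|_{\R^{n}_{0}}$ being $\le c_{2}$ forces the smallest eigenvalue of $G$ to be $\ge1/c_{2}$, i.e. \eqref{e:G1} with $c_{1}=1/c_{2}$. For \eqref{e:K2}, on $\B^{Y}_{R,\delta}$ I use $\Phi(a,b)\ge\sqrt{ab}$ together with $\sigma_{h}\le1$ and $\lambda_{h}\ge\delta$ to get $\Phi(\lambda_{h}/\sigma_{h},\lambda_{\ell}/\sigma_{\ell})\ge\delta$, hence $\langle K(x,\lambda,\Psi)\mu,\mu\rangle\ge\delta\,\langle\widetilde{K}(x,\Psi)\mu,\mu\rangle$, where $\widetilde{K}(x,\Psi):=\sum_{\ell>h}\Qq_{h\ell}\sigma_{\ell}(e_{h}-e_{\ell})\otimes(e_{h}-e_{\ell})=K(x,\sigma(x,\Psi),\Psi)$ is positive definite on $\R^{n}_{0}$ (being $K$ evaluated at $\lambda=\sigma\in\Lambda_{n}$); its smallest eigenvalue on $\R^{n}_{0}$ is a positive continuous function of $(x,\Psi)$, hence bounded below by $\kappa_{0}(R)>0$ on $\B_{R}\times\Pp(\B^{Y}_{R})$, giving \eqref{e:K2} with $c_{4}=\delta\kappa_{0}(R)$, and then \eqref{e:G2} with $c_{3}=1/c_{4}$ by inverting eigenvalues.

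For $(iv)$: on $\Lambda^{\delta}_{n}$ the arguments of $\Phi$ range in the compact set $[\delta,1/\eta]^{2}\subset(0,\infty)^{2}$, on which $\Phi$ and its first derivatives are bounded (recall $\Phi$ is analytic, also across the diagonal $a=b$); since moreover $\sigma_{h}\ge\eta$ and $\Qq_{h\ell}\sigma_{\ell}$ is bounded, differentiating \eqref{e:K} in $\lambda$ shows $\lambda\mapsto K(x,\lambda,\Psi)$ is Lipschitz on $\Lambda^{\delta}_{n}$ with a constant depending only on $\delta,R$; the resolvent identity $G_{1}-G_{2}=G_{1}(K_{2}-K_{1})G_{2}$ on $\R^{n}_{0}$ and the bound $\|G_{i}\|\le c_{3}$ from $(iii)$ then give \eqref{e:G3}. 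For $(v)$: from \eqref{e:E}, $\partial_{\lambda_{h}}E(x,\lambda,\Psi)=\ln(\lambda_{h}/\sigma_{h}(x,\Psi))+1$, bounded by $1+\max(|\ln\delta|,|\ln\eta|)$ on $\Lambda^{\delta}_{n}$ by $(i)$, whence Lipschitzness on the convex set $\Lambda^{\delta}_{n}$. For $(vi)$: write $E(x,\lambda,\Psi)=\sum_{h}g(\lambda_{h})-\sum_{h}\lambda_{h}\ln\sigma_{h}(x,\Psi)$ with $g(t)=t\ln t$; an elementary case distinction ($|s-t|\ge\min(s,t)$ versus $|s-t|<\min(s,t)$, using that $u\mapsto u^{1-\alpha}|\ln u|$ and $u\mapsto(1+|\ln u|)u^{1-\alpha}$ are bounded on $(0,1]$) shows $g\in C^{0,\alpha}([0,1])$ for every $\alpha\in(0,1)$ with constant depending only on $\alpha$, while the linear term is Lipschitz with constant $\le|\ln\eta|$ by $(i)$, hence $\alpha$-Hölder on the bounded set $\overline{\Lambda}_{n}$; summing the $n$ terms gives \eqref{e:E2}.

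I expect the main obstacle to be part $(iv)$: one must first obtain the uniform-in-$(x,\Psi)$ Lipschitz estimate for $\lambda\mapsto K(x,\lambda,\Psi)$ on $\Lambda^{\delta}_{n}$, which requires care about the analyticity of $\Phi$ across its diagonal and about the factors $1/\sigma_{h}$, and only then invert on the subspace $\R^{n}_{0}$ using the bound from $(iii)$. This is also the reason the assertions must be proved in the order given, with $(i)$ preceding everything and $(iii)$ preceding $(iv)$.
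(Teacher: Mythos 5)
Your proof is correct and, for items $(i)$, $(iv)$, $(v)$, it tracks the paper's argument closely: continuity plus compactness of $\B_R\times\Pp(\B^Y_R)$ for $(i)$, the identity $G_1-G_2=G_1(K_2-K_1)G_2$ on $\R^n_0$ combined with $(ii)$--$(iii)$ for $(iv)$, and the local boundedness of $\ln(\lambda_h/\sigma_h)$ on $[\delta,1/\eta]$ for $(v)$. Where you diverge is in the level of explicitness for $(ii)$/$(iii)$ and in the mechanism for $(vi)$. For $(ii)$/$(iii)$ the paper simply invokes continuity of $K$ in $(x,\lambda,\Psi)$ and compactness of $\B^Y_{R,\delta}\times\Pp(\B^Y_R)$ to obtain $c_2$ and $c_4$, then sets $c_1=c_2^{-1}$, $c_3=c_4^{-1}$; you instead use the two-sided logarithmic-mean inequality $\sqrt{ab}\le\Phi(a,b)\le(a+b)/2$ together with $(i)$ and $(\Qq_3)$ to produce concrete bounds, and in particular you factor the $\delta$-dependence out explicitly via $K\ge\delta\,K(x,\sigma(x,\Psi),\Psi)$, which yields $c_4=\delta\kappa_0(R)$ with $\kappa_0$ depending on $R$ alone. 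This is a mild refinement over the paper's purely soft argument, but the underlying mechanism — lower-bounding the smallest eigenvalue of $K$ on $\R^n_0$ by compactness, then inverting — is the same, and both rely on the same (implicit) irreducibility of $\Qq$ to ensure $K$ is positive definite on $\R^n_0$. For $(vi)$ the paper observes that $a\mapsto a\ln a$ belongs to $W^{1,p}([0,A])$ for every $p<\infty$ and uses the one-dimensional Sobolev embedding to obtain $\alpha$-H\"older continuity for every $\alpha<1$; you instead give an elementary case distinction directly on $g(t)=t\ln t$. Both arguments are short and correct; yours avoids invoking Sobolev embedding, which is perhaps more self-contained, while the paper's is slightly more succinct.
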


\begin{remark}
The constants~$c_{1}(R)$ and~$c_{4}(\delta, R)$ can be assumed to be decreasing with respect to~$R$, while~$c_{2}( R)$,~$c_{3}(\delta, R)$,~$L_{G, \delta, R}$,~$L_{E, \delta, R}$, and~$C_{E, \alpha, R}$ can be assumed to be increasing with respect to~$R$. 
\end{remark}

\begin{proof}[Proof of Lemma~\ref{l:estimate}]
In view of~$(\Qq_{1})$ and~$(\Qq_{2})$, we have that the function~$(x, \Psi) \mapsto \sigma (x, \Psi)$ is continuous from~$\R^{d} \times \Pp_{1}(Y) \to \Lambda_{n}$. Hence, there exists~$\eta= \eta( R ) > 0$ such that for every $x \in \B_{R}$, every $\Psi \in \Pp_{1}(\B^{Y}_{R})$, and every $h \in \{ 1, \ldots, n\}$, $ \sigma_{h} (x, \Psi) \geq \eta >0$, so that $(i)$ holds.

From~$(i)$,~\eqref{e:K}, the regularity of~$\Phi$, and~$(\Qq_{3})$, we further deduce that~\eqref{e:K1} holds for a suitable constant~$c_{2} = c_{2}( R)$. 

For every $(x , \lambda) \in \R^{d} \times \Lambda_{n}$ and every $\Psi \in \Pp_{1}(Y)$ we have that $K( x, \lambda, \Psi)$ is symmetric, positive semi-definite on~$\R^{n}$, and positive definite on~$\R^{n}_{0}$. Since $K$ is continuous with respect to~$(x, \lambda, \Psi)$, we deduce that there exists a positive constant~$c_{4} = c_{4} (\delta, R) \leq c_{2}$ such that  inequality~\eqref{e:K2} holds for every $(x , \lambda) \in \B^{Y}_{R, \delta}$ and every $\Psi \in \Pp (\B^{Y}_{R})$. Since $G$ is the inverse of~$K$ on~$\R^{n}_{0}$,~\eqref{e:K1} and~\eqref{e:K2} imply~\eqref{e:G1} and~\eqref{e:G2} with~$c_{1}(R) \coloneqq  c_{2}(R) ^{-1}  $ and $c_{3}(\delta, R) \coloneqq c_{4} (\delta, R)^{-1}$. This concludes the proof of~$(ii)$ and~$(iii)$.

The Lipschitz continuity $(iv)$ of~$G(x, \cdot, \Psi)$ in~$\Lambda^{\delta}_{n}$ follows from the regularity of~$K(x, \cdot, \Psi)$, from~$(i)$--$(iii)$, and from the identity
\begin{displaymath}
G ( x , \lambda_{1}, \Psi) - G(x, \lambda_{2}, \Psi) = G ( x , \lambda_{1}, \Psi) \big ( K ( x , \lambda_{2}, \Psi) - K ( x , \lambda_{1}, \Psi)\big) G ( x , \lambda_{2}, \Psi)  \qquad \text{on~$\R^{n}_{0}$}\,.
\end{displaymath}

As for~$(v)$, we notice that for $x \in \B_{R}$,~$\Psi \in \Pp(\B^{Y}_{R})$, and~$\lambda \in \Lambda^{\delta}_{n}$, the ratio $\lambda_{h}/\sigma_{h} (x, \Psi)$ is bounded from below and from above by~$ \delta$ and by~$1/\eta$, respectively. Since the function~$a \mapsto a \ln a$ is locally Lipschitz continuous in~$(0,+\infty)$, we have that there exists $L = L ( \delta, R)>0$ such that~\eqref{e:E1} holds.

Finally, we note that the function~$a \mapsto a \ln a$ belongs to~$W^{1, p}( [ 0, A])$ for every $p \in [1, +\infty)$ and every $A<+\infty$. In view of~$(i)$, for every $x \in \B_{R}$, every $\Psi \in \Pp(\B^{Y}_{R})$, and every $\lambda \in \overline{\Lambda}_{n}$, the ratio $\lambda_{h}/\sigma_{h} (x, \Psi)$ is bounded above by~$1/\eta$. Hence, by Sobolev embedding in dimension one we infer that for every $\alpha \in (0, 1)$ there exists $C = C ( \alpha, R) > 0$ such that~\eqref{e:E2} holds. 
\end{proof}

Before stating the main properties of the distance~$\sfd_{(x,\Psi)}$, we define, for every $x \in \R^{d}$, every $\Psi \in \Pp_{1}(Y)$, and every $\lambda, \lambda_{1}, \lambda_{2} \in \Lambda_{n}$, the norm
\begin{displaymath}
\| \lambda_{1} - \lambda_{2}\|_{G(x, \lambda, \Psi)} \coloneqq \left \langle G(x, \lambda, \Psi) (\lambda_{1} - \lambda_{2}) , \lambda_{1} - \lambda_{2} \right \rangle ^{\frac12} \,,
\end{displaymath}
which is well-defined in view of~\eqref{e:G1} and~\eqref{e:G2}.

\begin{lemma}\label{l:distance}
Let $\delta,  R>0$ and let~$c_{1}, c_{3} >0$ be the constants determined in~\eqref{e:G1} and~\eqref{e:G2}. Then, the following facts hold:
\begin{itemize}
\item[$(i)$] there exists a positive constant~$m_{1} = m_{1} (R)$ such that for every $x \in \B_{R}$ and every $\Psi \in \Pp(\B^{Y}_{R})$
\begin{equation}\label{e:d1}
m_{1} | \lambda_{1} - \lambda_{2} | \leq \sfd_{(x, \Psi)} ( \lambda_{1}, \lambda_{2}) \qquad \text{for every $\lambda_{1}, \lambda_{2} \in \Lambda_{n}$}\,;
\end{equation}


\item[$(ii)$] there exist two positive constants $m_{2} = m_{2} (\delta, R)$ and $m_{3} = m_{3}(\delta, R)$ such that for every $x \in \B_{R}$, every $\Psi \in \Pp(B^{Y}_{R})$, and every $\lambda_{1}, \lambda_{2} \in \Lambda^{\delta}_{n}$
\begin{align}
& \sfd_{(x, \Psi)} ( \lambda_{1} , \lambda_{2}) \leq m_{2} | \lambda_{1} - \lambda_{2} | \,, \label{e:d5} \\
& \sfd_{(x, \Psi)} (\lambda_{1}, \lambda_{2}) \leq  \| \lambda_{1} - \lambda_{2} \|_{G(x, \lambda_{1}, \Psi)} + m_{3} | \lambda_{1} - \lambda_{2} |^{\frac32}; \label{e:d3}
\end{align}

\item[$(iii)$] there exists a positive constant $m_{4}= m_{4}(\delta, R)$ such that for every $x \in \B_{R}$, every $\Psi \in \Pp(B^{Y}_{R})$, and every $\lambda_{1}, \lambda_{2} \in \Lambda^{\delta}_{n}$ satisfying
\begin{equation}\label{e:134}
\sqrt{\frac{c_{3}}{c_{1}}}\, | \lambda_{1} - \lambda_{2} | < \min\, \big\{ \dist (\lambda_{1}, \partial\Lambda^{\delta}_{n}), \dist (\lambda_{2}, \partial\Lambda^{\delta}_{n}) \big\}
\end{equation}
we have
\begin{equation}
  \| \lambda_{1} - \lambda_{2} \|_{G(x, \lambda_{1}, \Psi)} \leq  \sfd_{(x, \Psi)} (\lambda_{1}, \lambda_{2}) + m_{4} | \lambda_{1} - \lambda_{2} |^{\frac32}. \label{e:d4}
\end{equation}
\end{itemize}
\end{lemma}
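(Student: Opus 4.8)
Here is how I would prove Lemma~\ref{l:distance}. The plan is to handle the three items in increasing order of difficulty, always producing competitor curves by hand rather than relying on the existence of geodesics (the tensor $G$ may degenerate towards $\partial\Lambda_n$). For $(i)$ I would first note that any admissible curve $\rho\in C^{1}([0,1];\Lambda_{n})$ satisfies $\rho'(s)\in\R^{n}_{0}$ for all $s$, since $\sum_{h}\rho_{h}(s)\equiv 1$; then \eqref{e:G1} gives $\langle G(x,\rho(s),\Psi)\rho'(s),\rho'(s)\rangle^{1/2}\ge\sqrt{c_{1}}\,|\rho'(s)|$, and integrating over $[0,1]$ together with $|\lambda_{1}-\lambda_{2}|\le\int_{0}^{1}|\rho'(s)|\,\di s$ bounds the Riemannian length of $\rho$ below by $\sqrt{c_{1}}\,|\lambda_{1}-\lambda_{2}|$; passing to the infimum yields \eqref{e:d1} with $m_{1}\coloneqq\sqrt{c_{1}(R)}$. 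For $(ii)$ both bounds come from the straight segment $\rho(s)\coloneqq(1-s)\lambda_{1}+s\lambda_{2}$, which lies in $\Lambda_{n}^{\delta}$ by convexity and has $\rho'\equiv\lambda_{2}-\lambda_{1}\in\R^{n}_{0}$: inserting it into \eqref{e:d} and using \eqref{e:G2} gives \eqref{e:d5} with $m_{2}\coloneqq\sqrt{c_{3}(\delta,R)}$, whereas for \eqref{e:d3} I would combine \eqref{e:G3} with $|\rho(s)-\lambda_{1}|=s|\lambda_{1}-\lambda_{2}|$ to obtain $\langle G(x,\rho(s),\Psi)\mu,\mu\rangle\le\|\mu\|_{G(x,\lambda_{1},\Psi)}^{2}+L_{G,\delta,R}\,s\,|\lambda_{1}-\lambda_{2}|^{3}$ with $\mu\coloneqq\lambda_{2}-\lambda_{1}$, then use $\sqrt{a+b}\le\sqrt{a}+\sqrt{b}$ and integrate in $s$, which gives $m_{3}\coloneqq\tfrac{2}{3}\sqrt{L_{G,\delta,R}}$.

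For $(iii)$, given $\eta_{0}>0$ I would pick an admissible curve $\rho$ joining $\lambda_{1}$ to $\lambda_{2}$ whose Riemannian length $\ell(\rho)$ satisfies $\ell(\rho)\le\sfd_{(x,\Psi)}(\lambda_{1},\lambda_{2})+\eta_{0}$. By \eqref{e:d5} this is at most $\sqrt{c_{3}}\,|\lambda_{1}-\lambda_{2}|+\eta_{0}$, and by \eqref{e:G1} the \emph{Euclidean} length of $\rho$ does not exceed $\ell(\rho)/\sqrt{c_{1}}$, so every point of $\rho$ lies within Euclidean distance $\sqrt{c_{3}/c_{1}}\,|\lambda_{1}-\lambda_{2}|+\eta_{0}/\sqrt{c_{1}}$ of $\lambda_{1}$. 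The key observation is that \eqref{e:134} is a \emph{strict} inequality, so for $\eta_{0}$ small (allowed to depend on $\lambda_{1},\lambda_{2}$, which is harmless) this distance stays strictly below $\dist(\lambda_{1},\partial\Lambda_{n}^{\delta})$, which forces the whole curve $\rho$ to remain in the interior of $\Lambda_{n}^{\delta}$, where the Lipschitz bound \eqref{e:G3} on $G(x,\cdot,\Psi)$ becomes available.

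With $\rho$ confined to $\Lambda_{n}^{\delta}$, I would write $\mu=\lambda_{2}-\lambda_{1}=\int_{0}^{1}\rho'(s)\,\di s\in\R^{n}_{0}$ and use Minkowski's integral inequality for the norm $\|\cdot\|_{G(x,\lambda_{1},\Psi)}$ on $\R^{n}_{0}$ to get $\|\mu\|_{G(x,\lambda_{1},\Psi)}\le\int_{0}^{1}\|\rho'(s)\|_{G(x,\lambda_{1},\Psi)}\,\di s$; then, comparing $G(x,\lambda_{1},\Psi)$ with $G(x,\rho(s),\Psi)$ via \eqref{e:G3}, bounding $|\rho(s)-\lambda_{1}|$ by the confinement estimate of the previous paragraph and $|\rho'(s)|$ by $\|\rho'(s)\|_{G(x,\rho(s),\Psi)}/\sqrt{c_{1}}$ via \eqref{e:G1}, together with $\sqrt{1+y}\le 1+\sqrt{y}$, I would deduce $\|\rho'(s)\|_{G(x,\lambda_{1},\Psi)}\le\big(1+C|\lambda_{1}-\lambda_{2}|^{1/2}+C'\sqrt{\eta_{0}}\big)\,\|\rho'(s)\|_{G(x,\rho(s),\Psi)}$ for constants $C,C'=C(\delta,R),C'(\delta,R)$. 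Integrating in $s$ gives $\|\mu\|_{G(x,\lambda_{1},\Psi)}\le(1+C|\lambda_{1}-\lambda_{2}|^{1/2}+C'\sqrt{\eta_{0}})\,\ell(\rho)\le(1+C|\lambda_{1}-\lambda_{2}|^{1/2}+C'\sqrt{\eta_{0}})\big(\sfd_{(x,\Psi)}(\lambda_{1},\lambda_{2})+\eta_{0}\big)$; letting $\eta_{0}\downarrow 0$ and invoking \eqref{e:d5} once more to estimate $C|\lambda_{1}-\lambda_{2}|^{1/2}\,\sfd_{(x,\Psi)}(\lambda_{1},\lambda_{2})\le C\sqrt{c_{3}}\,|\lambda_{1}-\lambda_{2}|^{3/2}$ yields \eqref{e:d4} with $m_{4}\coloneqq C\sqrt{c_{3}}$. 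The square-root and power-counting manipulations are routine; the one genuinely delicate step is the confinement argument in $(iii)$, which is precisely where the strictness in \eqref{e:134} is used — it is what allows an \emph{almost}-minimizing (not necessarily minimizing) competitor to be kept inside $\Lambda_{n}^{\delta}$, so that the Lipschitz bound \eqref{e:G3} and the comparison it feeds remain legitimate.
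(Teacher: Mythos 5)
Your proof is correct and follows essentially the same strategy as the paper's: $(i)$ from the lower ellipticity bound on~$G$, $(ii)$ by testing with the straight segment and the Lipschitz estimate on~$G$, and $(iii)$ by confining an almost-minimizing competitor to $\Lambda_n^\delta$ via the strictness of~\eqref{e:134} together with \eqref{e:d5}, \eqref{e:G1}, and then comparing $G(x,\lambda_1,\Psi)$ with $G(x,\rho(s),\Psi)$ along the curve. The only cosmetic difference is that in $(iii)$ you carry out the comparison multiplicatively (pulling out a factor $1+C|\lambda_1-\lambda_2|^{1/2}$) whereas the paper does it additively; both yield the same $|\lambda_1-\lambda_2|^{3/2}$ error and are interchangeable.
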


\begin{remark}
The constant~$m_{1}(R)$ can be assumed to be decreasing with respect to~$R$, while~$m_{2}(\delta, R)$, $m_{3}(\delta, R)$, and~$m_{4}(\delta, R)$, can be assumed to be increasing with respect to~$R$. 
\end{remark}

\begin{proof}[Proof of Lemma~\ref{l:distance}]
Point~$(i)$ is a consequence of~\eqref{e:G1}. We now prove~$(ii)$. Given~$x \in \B_{R}$, $\Psi \in \Pp( \B^{Y}_{R})$, and~$\lambda_{1}, \lambda_{2} \in \Lambda^{\delta}_{n}$, we have that the curve
\begin{displaymath}
\rho(s) \coloneqq (1- s) \lambda_{1} + s\lambda_{2} \qquad s \in [0,1]
\end{displaymath}
is a competitor for the infimum in the definition of~$\sfd_{(x, \Psi)} (\lambda_{1}, \lambda_{2})$ in~\eqref{e:d}. Moreover, by convexity, $\rho(s) \in \Lambda^{\delta}_{n}$ for every $s \in [0,1]$. Therefore, applying $(iii)$ of Lemma~\ref{l:estimate} we get
\begin{displaymath}
\sfd_{(x, \Psi)} (\lambda_{1}, \lambda_{2})  \leq \int_{0}^{1} \left\langle G(x, \rho(s) , \Psi) (\lambda_{2} - \lambda_{1}), \lambda_{2} - \lambda_{1}\right\rangle ^{\frac12} \di s  \leq \sqrt{c_{3}} | \lambda_{1} - \lambda_{2} | \,,
\end{displaymath}
which is~\eqref{e:d5} with $m_{2} = \sqrt{c_{3}}$. 

Combining, instead,~$(iii)$ and~$(iv)$ of Lemma~\ref{l:estimate} we can further estimate
\begin{equation}\label{e:131}
\begin{split}
\sfd_{(x, \Psi)} (\lambda_{1}, \lambda_{2}) & \leq \int_{0}^{1} \left\langle G(x, \rho(s) , \Psi) (\lambda_{2} - \lambda_{1}), \lambda_{2} - \lambda_{1}\right\rangle ^{\frac12} \di s  
\\
&
\vphantom{\int} \leq \left\langle G(x, \lambda_{1} , \Psi) (\lambda_{2} - \lambda_{1}), \lambda_{2} - \lambda_{1}\right\rangle ^{\frac12} 
\\
&
\qquad + \int_{0}^{1} \left |\left\langle \big( G(x, \rho(s) , \Psi) - G(x, \lambda_{1}, \Psi) \big) (\lambda_{2} - \lambda_{1}), \lambda_{2} - \lambda_{1}  \right\rangle \right| ^{\frac12} \di s 
\\
&
\leq  \| \lambda_{1} - \lambda_{2} \|_{ G (x, \lambda_{1}, \Psi) } + \int_{0}^{1} \left( L_{ G, \delta, R } | \lambda_{1} - \rho(s) | \right )^{\frac{1}{2}} | \lambda_{1} - \lambda_{2} | \, \di s 
\\
&
\vphantom{\int} \leq \| \lambda_{1} - \lambda_{2} \|_{G(x, \lambda_{1}, \Psi)} + \sqrt{L_{G, \delta, R}}\, |\lambda_{1} - \lambda_{2} | ^{\frac{3}{2}},
\end{split}
\end{equation}
from which we conclude~\eqref{e:d3} with $m_{3} = \sqrt{L_{G, \delta, R}}$.

Finally, let $x$, $\Psi$, $\lambda_{1}$, and~$\lambda_{2}$ be as in point~$(iv)$. For every $\varepsilon>0$ let $\rho_{\varepsilon} \in C^{1}([0,1]; \Lambda_{n})$ with $\rho_{\varepsilon} (0) = \lambda_{1}$ and $\rho_{\varepsilon} (1) = \lambda_{2}$ be such that
\begin{equation}\label{e:132}
\int_{0}^{1} \left\langle G(x, \rho_{\varepsilon}(s) ,\Psi)  \rho_{\varepsilon}'(s) , \rho_{\varepsilon}'(s) \right\rangle^{\frac{1}{2}} \di s \leq \sfd_{(x, \Psi)} (\lambda_{1} , \lambda_{2}) + \varepsilon\,.
\end{equation}
In view of~\eqref{e:d5} and of~\eqref{e:G1}, we deduce from~\eqref{e:132} that
\begin{equation}\label{e:133}
\sqrt{c_{1}} \int_{0}^{1} | \rho_{\varepsilon}'(s) | \, \di s \leq m_{2} | \lambda_{1} - \lambda_{2} | + \varepsilon = \sqrt{c_{3}} | \lambda_{1} - \lambda_{2} | + \varepsilon \,.
\end{equation}
Hence,~\eqref{e:134} and~\eqref{e:133} imply that
\begin{displaymath}
 \int_{0}^{1} | \rho_{\varepsilon}'(s) | \, \di s < \min\, \big\{ \dist (\lambda_{1}, \partial\Lambda^{\delta}_{n}), \dist (\lambda_{2}, \partial\Lambda^{\delta}_{n}) \big\} + \frac{\varepsilon}{\sqrt{c_{1}}}\,.
\end{displaymath}
Therefore, for~$\varepsilon$ small enough we may assume that $\rho_{\varepsilon}(s) \in \Lambda^{\delta}_{n}$ for every $s \in [0,1]$. For such $\varepsilon$ we estimate
\begin{displaymath}
\begin{split}
\| \lambda_{1} - \lambda_{2} \|_{G(x, \lambda_{1}, \Psi)} & \leq \int_{0}^{1} \left\langle G(x, \lambda_{1}, \Psi) \rho_{\varepsilon}'(s) , \rho'_{\varepsilon} (s) \right\rangle ^{\frac{1}{2}} \di s 
\\
&
\leq \int_{0}^{1} \left\langle G(x, \rho_{\varepsilon}(s), \Psi) \rho_{\varepsilon}'(s) , \rho_{\varepsilon}' (s) \right\rangle^{\frac{1}{2}} \di s 
\\
&
\qquad + \int_{0}^{1}\left| \left\langle \big( G(x, \lambda_{1}, \Psi) - G( x, \rho_{\varepsilon}(s), \Psi) \big) \rho_{\varepsilon}'(s) , \rho_{\varepsilon}' (s) \right\rangle \right|^{\frac{1}{2}} \di s 
\\
&
\leq \sfd_{(x, \Psi)} (\lambda_{1}, \lambda_{2}) + \int_{0}^{1} \left| \left\langle \big( G(x, \lambda_{1}, \Psi) - G( x, \rho_{\varepsilon}(s), \Psi) \big) \rho_{\varepsilon}'(s) , \rho_{\varepsilon}' (s) \right\rangle \right|^{\frac{1}{2}} \di s  + \varepsilon\,.
\end{split}
\end{displaymath}
Since $\rho_{\varepsilon}(s) \in \Lambda^{\delta}_{n}$ for every $s \in [0,1]$, by~$(iv)$ of Lemma~\ref{l:estimate} and by~\eqref{e:133} we have that
\begin{equation}\label{e:135}
\begin{split}
\| \lambda_{1} - \lambda_{2} \|_{G(x, \lambda_{1}, \Psi)} & \leq \sfd_{(x, \Psi)} (\lambda_{1}, \lambda_{2}) + \sqrt{L_{G, \delta, R}} \int_{0}^{1} | \lambda_{1} - \rho_{\varepsilon}(s) | ^{\frac12} | \rho_{\varepsilon} ' (s) | \, \di s + \varepsilon
\\
&
\leq \sfd_{(x, \Psi)} (\lambda_{1}, \lambda_{2}) + \sqrt{L_{G, \delta, R}} \bigg(\int_{0}^{1} | \rho_{\varepsilon} ' (s) | \, \di s \bigg)^{\frac32} + \varepsilon
\\
&
\leq \sfd_{(x, \Psi)} (\lambda_{1}, \lambda_{2}) + \sqrt{L_{G, \delta, R}}\, \bigg( \frac{c_{3}}{c_{1}} \bigg)^{\frac34} |\lambda_{1} - \lambda_{2}| ^{\frac32}+ \varepsilon \bigg( 1 + \sqrt{\frac{ L_{ G, \delta, R}}{c_{1}^{3/2}}} \bigg).
\end{split}
\end{equation}
Thus, we conclude~\eqref{e:d4} by passing to the limit in~\eqref{e:135} as~$\varepsilon \to 0$. In particular,~$m_{4} = \sqrt{L_{G, \delta, R}} \big( \frac{c_{3}}{c_{1}} \big)^{\frac34}$.
\end{proof}

We now rewrite the alternate scheme presented in Section~\ref{s:alternative} in the language of Markov chains, and show its short-time convergence to a solution of the continuity equation~\eqref{e:cont_eq}, where for~$\Psi \in \Pp_{1}(Y)$ the field $b_{\Psi} \colon Y \to Y $ is now defined as
\begin{displaymath}
b_{\Psi} ( x, \lambda) \coloneqq \left( \begin{array}{cc}
v_{\Psi} (x, \lambda) \\
\Qq(x, \Psi) \lambda
\end{array}\right)
\end{displaymath}
for a velocity field $v_{\Psi} \colon Y \to \R^{d}$ satisfying properties~$(v_{1})$--$(v_{3})$ of Section~\ref{s:hp}.

Let us fix a time step~$\tau_{k}>0$, $k \in \mathbb{N}$, such that $\tau_{k} \to 0 $ as $k \to \infty$, and let $t^{k}_{i} \coloneqq i \tau_{k}$ for $i \in \mathbb{N}$. For $i=0$ we set $\Psi^{k}_{0} \coloneqq \widehat{\Psi} \in \Pp_{1}(Y)$. For $i>0$, assume we are given $\Psi^{k}_{i} \in \Pp_{1}(Y)$. Then, similarly to~\eqref{e:HS_step}, the label of an agent sitting in position~$\hat{x} \in \R^{d}$ with label~$\hat{\lambda} \in \overline{\Lambda}_{n}$ is updated by solving the minimizing movement
\begin{equation}\label{e:Markov_step}
\min \, \bigg\{ E ( \hat{x} , \lambda , \Psi^{k}_{i} ) + \frac{1}{2\tau_{k}} \, \sfd^{2}_{(\hat{x}, \Psi^{k}_{i})} ( \lambda, \hat{\lambda}) : \, \lambda \in \overline{\Lambda}_{n} \bigg\} \,.
\end{equation}
Since~$\overline{\Lambda}_{n}$ is compact,~\eqref{e:Markov_step} admits at least one solution~$\lambda_{(\hat{x}, \hat{\lambda}), i+1}$\footnote{The arguments in~\cite[Section~2.3]{Mielke} can also be used to show that~\eqref{e:Markov_step} admits indeed a unique solution for~$\tau_{k}$ sufficiently small. However, uniqueness is not needed in our framework.}.
Therefore, we can define~$\lambda^{k}_{(\hat{x}, \hat{\lambda}), i+1}$, $\Lambda^{k}_{i+1}$, and~$\tilde{\Psi}^{k}_{i+1}$ exactly as in~\eqref{100},~\eqref{e:lambda}, and~\eqref{e:tilde_Psi}, respectively. The step~\eqref{e:ODE2} in the space variable remains the same, and~$x^{k}_{(\hat{x}, \hat{\lambda}), i+1}$, $X^{k}_{i+1}$, $\Psi^{k}_{i+1}$ are as in~\eqref{101},~\eqref{e:X}, and~\eqref{e:Psi}. Furthermore, we refer to~\eqref{e:Psi},~\eqref{e:Psi_tilde}, and~\eqref{e:underline_Psi} for the definition of the interpolation curves~$\Psi^{k}$,~$\widetilde{\Psi}^{k}$, and~$\underline{\Psi}^{k}$.

Repeating step by step the proofs of Lemmas~\ref{l:Gronwall_bound} and~\ref{l:Gronwall2}, we obtain the following uniform estimate on~$\Psi^{k}(t)$,~$\widetilde{\Psi}^{k}(t)$, and~$\underline{\Psi}^{k}(t)$.

\begin{lemma}\label{l:Gronwall3}
Let $\widehat{\Psi} \in \Pp_{c}(Y)$. Then there exists an increasing continuous function $R\colon[0,+\infty) \to [0,+\infty)$ such that for every $T \in [0,+\infty)$, every $k \in \mathbb{N}$, and every $t\in [0,T]$, $\Psi^{k}(t),  \underline{\Psi}^{k}(t),  \widetilde{\Psi}^{k}(t) \in \Pp(\B^{Y}_{R(T)})$.
\end{lemma}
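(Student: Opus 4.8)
The plan is to follow, essentially verbatim, the proofs of Lemmas~\ref{l:Gronwall_bound} and~\ref{l:Gronwall2}, the only novelty being that we must now keep track of how the confining radius depends on the time horizon. Fix $T>0$ and work on $[0,T]$. As in Section~\ref{s:alternative}, since the label update~\eqref{e:Markov_step} is a minimization over the \emph{compact} set $\overline{\Lambda}_{n}=\Pp(U)$, every solution $\lambda_{(\hat x,\hat\lambda),i+1}$ belongs to $\overline{\Lambda}_{n}$; hence the piecewise affine interpolants $\lambda^{k}(t,x_{0},\lambda_{0})$ defined via~\eqref{100}--\eqref{e:lambda} take values in $\Pp(U)$ for \emph{every} $k\in\mathbb N$ — no smallness of $\tau_{k}$ is needed, in contrast with Lemma~\ref{l:Gronwall_bound} — so that $\|\lambda^{k}(t,x_{0},\lambda_{0})\|_{\mathrm{BL}}\le1$ and, by~\eqref{e:110}, also $\|\underline\lambda^{k}(t,x_{0},\lambda_{0})\|_{\mathrm{BL}},\,\|\overline\lambda^{k}(t,x_{0},\lambda_{0})\|_{\mathrm{BL}}\le1$ for all $(x_{0},\lambda_{0})\in\mathcal S\coloneqq\spt\widehat\Psi$ and all $t\in[0,T]$. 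In particular the position interpolants $x^{k}(t,x_{0},\lambda_{0})$ from~\eqref{101},~\eqref{e:X} are well defined for every $k$, and $m_{1}(\widetilde\Psi^{k}(\tau))\le 1+\sup_{\mathcal S}|\underline x^{k}(\tau,\cdot,\cdot)|$. Note that the properties of $E$, $K$, $G$, and $\sfd_{(x,\Psi)}$ are not needed here: only $(v_{3})$ and the compactness of $\overline{\Lambda}_{n}$ enter.

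Next I would run the Gronwall argument~\eqref{e:104}--\eqref{e:107} unchanged. With $f_{k}(t)\coloneqq\sup_{(\hat x,\hat\lambda)\in\mathcal S}|\underline x^{k}(t,\hat x,\hat\lambda)|$ and $r>0$ such that $\mathcal S\subseteq\B^{Y}_{r}$, assumption $(v_{3})$ and~\eqref{e:ODE2} yield
\begin{displaymath}
f_{k}(t)\le r+\int_{0}^{t}3M_{v}\bigl(1+f_{k}(\tau)\bigr)\,\di\tau\qquad\text{for }t\in[0,T],
\end{displaymath}
whence $f_{k}(t)\le(r+3M_{v}T)e^{3M_{v}T}$ for every $t\in[0,T]$ and every $k$. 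Setting $R'\coloneqq1+(r+3M_{v}T)e^{3M_{v}T}$ and arguing as in~\eqref{e:107} we get $|x^{k}(t,x_{0},\lambda_{0})|\le r+2M_{v}(1+R')T$, so that with
\begin{displaymath}
R(T)\coloneqq\max\bigl\{R',\,r+2M_{v}(1+R')T+1\bigr\}
\end{displaymath}
we conclude, exactly as in Lemma~\ref{l:Gronwall_bound}, that $\Psi^{k}(t),\underline\Psi^{k}(t),\widetilde\Psi^{k}(t)\in\Pp(\B^{Y}_{R(T)})$ for all $t\in[0,T]$ and all $k\in\mathbb N$.

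Finally I would check that $R$ has the claimed structure. The function $T\mapsto R'=1+(r+3M_{v}T)e^{3M_{v}T}$ is continuous and increasing, hence so is $T\mapsto r+2M_{v}(1+R')T+1$, and therefore so is their maximum; moreover, for $T_{1}<T_{2}$ the scheme on $[0,T_{1}]$ is just the restriction to $[0,T_{1}]$ of the scheme on $[0,T_{2}]$, and $R(T_{1})\le R(T_{2})$, so one single continuous increasing function $R\colon[0,+\infty)\to[0,+\infty)$ works simultaneously for all time horizons. The only point requiring a little attention is precisely this last one — keeping the $T$-dependence of the Gronwall constant explicit so that the resulting radius is monotone and continuous rather than merely finite for each fixed $T$ — but since every constant above is explicit, this is routine; everything else is a word-for-word repetition of Lemmas~\ref{l:Gronwall_bound} and~\ref{l:Gronwall2}.
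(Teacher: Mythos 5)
Your proposal is correct and follows exactly the route the paper takes: the paper's own proof is a one-line appeal to "the arguments of Lemmas~\ref{l:Gronwall_bound} and~\ref{l:Gronwall2}," noting the explicit formula for $R(T)$ is continuous and increasing, which is precisely what you spell out (compactness of $\overline{\Lambda}_n$ replacing the $\tau_k$-smallness condition, the Gronwall bound giving $R'=1+(r+3M_vT)e^{3M_vT}$, and then $R(T)=\max\{R',\,r+2M_v(1+R')T+1\}$). No discrepancy.
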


\begin{proof}
The statement follows by the arguments of Lemmas~\ref{l:Gronwall_bound} and~\ref{l:Gronwall2}. In particular, we gave there an explicit formula for~$R(T)$ as a function of~$T \in [0,+\infty)$, which turns out to be continuous and increasing. 
\end{proof}

Also in the current setting, we need to write an approximate Euler-Lagrange equation associated with~\eqref{e:Markov_step}. This is done in  Proposition~\ref{p:Markov_euler} below, for proving which we need the following lemma.
\begin{lemma}\label{lemmafacile}
Let $f\colon \R^{N}\to\R\cup\{+\infty\}$ be a convex function, let $A\in \mathbb{M}^N$ be a symmetric and positive definite matrix, and let $\lVert\cdot\rVert_A\colon\R^{N}\to[0,+\infty)$ be the norm associated with $A$, namely $\lVert \xi\rVert_A^2\coloneqq \langle A\xi,\xi\rangle$, for all $\xi\in\R^{N}$. For a fixed $\zeta\in\R^{N}$ and $c>0$, assume that $\xi_0$ solves
\begin{equation}\label{900}
\min\big\{f(\xi)+c\lVert \xi-\zeta \rVert_A^2\big\}.
\end{equation}
Then $\xi_0$ also solves
\begin{equation}\label{901}
\min\big\{f(\xi)+c\lVert \xi-\zeta \rVert_A^2-c\lVert \xi-\xi_0 \rVert_A^2 \big\}.
\end{equation}
\end{lemma}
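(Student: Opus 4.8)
The plan is to reduce the claim to a single first-order optimality inequality for problem~\eqref{900}. The starting point is the elementary three-point identity for the norm $\lVert\cdot\rVert_A$: writing $\xi-\zeta=(\xi-\xi_0)+(\xi_0-\zeta)$ and using the symmetry of $A$,
\begin{equation*}
\lVert\xi-\zeta\rVert_A^2-\lVert\xi-\xi_0\rVert_A^2=\lVert\xi_0-\zeta\rVert_A^2+2\langle A(\xi_0-\zeta),\xi-\xi_0\rangle .
\end{equation*}
Hence the functional minimized in~\eqref{901} equals
\begin{equation*}
V(\xi):=f(\xi)+c\lVert\xi_0-\zeta\rVert_A^2+2c\langle A(\xi_0-\zeta),\xi-\xi_0\rangle ,
\end{equation*}
and since $\lVert\xi_0-\xi_0\rVert_A=0$ we get $V(\xi_0)=f(\xi_0)+c\lVert\xi_0-\zeta\rVert_A^2$. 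Therefore proving that $\xi_0$ solves~\eqref{901} amounts to showing
\begin{equation*}
f(\xi_0)\le f(\xi)+2c\langle A(\xi_0-\zeta),\xi-\xi_0\rangle\qquad\text{for every }\xi\in\R^N .
\end{equation*}

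The main (and only) step is to obtain this inequality from the minimality of $\xi_0$ in~\eqref{900} by the standard convex-variation argument. Fix $\xi\in\R^N$; one may assume $f(\xi)<+\infty$, otherwise there is nothing to prove. For $t\in(0,1]$ set $\xi_t:=(1-t)\xi_0+t\xi$. Minimality of $\xi_0$ and convexity of $f$ give
\begin{equation*}
f(\xi_0)+c\lVert\xi_0-\zeta\rVert_A^2\le f(\xi_t)+c\lVert\xi_t-\zeta\rVert_A^2\le(1-t)f(\xi_0)+t\,f(\xi)+c\lVert\xi_t-\zeta\rVert_A^2 .
\end{equation*}
Expanding $\lVert\xi_t-\zeta\rVert_A^2=\lVert\xi_0-\zeta\rVert_A^2+2t\langle A(\xi_0-\zeta),\xi-\xi_0\rangle+t^2\lVert\xi-\xi_0\rVert_A^2$, cancelling $c\lVert\xi_0-\zeta\rVert_A^2$, subtracting $(1-t)f(\xi_0)$, dividing by $t>0$, and letting $t\to0^+$ yields exactly $f(\xi_0)\le f(\xi)+2c\langle A(\xi_0-\zeta),\xi-\xi_0\rangle$. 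Combined with the reduction of the previous paragraph, this gives $V(\xi)\ge V(\xi_0)$ for all $\xi$, i.e.\ $\xi_0$ solves~\eqref{901}. (Equivalently, one could argue via subdifferentials: minimality in~\eqref{900} plus the Moreau--Rockafellar sum rule gives $2cA(\zeta-\xi_0)\in\partial f(\xi_0)$, while $V$ is convex with $\partial V(\xi_0)=\partial f(\xi_0)+2cA(\xi_0-\zeta)\ni0$, so $\xi_0$ minimizes $V$.)

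There is essentially no serious obstacle here — the argument is pure bookkeeping — and the only point requiring a word of care is the extended-real-valuedness of $f$: one restricts attention to $\xi\in\mathrm{dom}\,f$ (for $\xi\notin\mathrm{dom}\,f$ the inequality in~\eqref{901} is trivial, since the two quadratic terms are finite), and $f(\xi_0)<+\infty$ because $\xi_0$ attains the finite minimum in~\eqref{900} as soon as $f$ is proper (if $f\equiv+\infty$ the statement is vacuous). Positive definiteness of $A$ is used only to guarantee that $\lVert\cdot\rVert_A$ is a genuine norm; the computation itself needs nothing beyond the symmetry of $A$.
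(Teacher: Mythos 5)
Your proof is correct. Both you and the paper start from the same observation: after expanding $\lVert\xi-\zeta\rVert_A^2-\lVert\xi-\xi_0\rVert_A^2$, problem~\eqref{901} is, up to an additive constant, $\min\{f(\xi)+2c\langle A(\xi_0-\zeta),\xi\rangle\}$. The paper then closes in one line by quoting the subdifferential characterization of minimality in~\eqref{900}, namely $-2cA(\xi_0-\zeta)\in\partial f(\xi_0)$, and recognizing this as the Euler condition for the reduced convex problem. Your main argument instead derives the same first-order inequality $f(\xi_0)\le f(\xi)+2c\langle A(\xi_0-\zeta),\xi-\xi_0\rangle$ directly from~\eqref{900} by the standard convex-perturbation computation (test with $\xi_t=(1-t)\xi_0+t\xi$, divide by $t$, let $t\to0^+$), which is more elementary and self-contained — it never invokes $\partial f$ or the Moreau--Rockafellar sum rule — and you also handle the extended-real-valuedness of $f$ explicitly. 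Your parenthetical remark at the end is exactly the paper's proof, so the two are equivalent; your version simply unpacks the subdifferential step into a hands-on verification.
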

\begin{proof}
It is enough to observe that the problem \eqref{901} can be equivalently rewritten as
$$\min\big\{f(\xi)+2c \langle\xi, A(\xi_0-\zeta)\rangle \big\}$$
hence it is a convex minimization problem. Since $\xi_0$ solves \eqref{900}, we have~$-2cA(\xi_0-\zeta)\in\partial f(\xi_0)$, which is exactly the Euler condition for the problem above.

\end{proof}

\begin{proposition}\label{p:Markov_euler}
Let $\delta, R > 0$ and let~$m_{1}(R)$, $C_{E, \alpha, R}$, and $L_{E, \delta, R}$ be the constants determined in Lemmas~\ref{l:estimate} and~\ref{l:distance}. Assume that $\Psi \in \Pp (\B^{Y}_{R})$ and $( x, \lambda) \in \B^{Y}_{R, \delta}$, and let~$\tilde{\lambda}$  be a solution to
\begin{equation}\label{e:140}
\min \, \bigg\{E  (x , \rho, \Psi ) + \frac{1}{2 \tau_{k} } \sfd^{2}_{(x, \Psi)} (\rho, \lambda) : \, \rho \in \overline{\Lambda}_{n} \bigg\} \,.
\end{equation}
Then, the following facts hold:
\begin{itemize}
\item[$(i)$] for every $\alpha \in (0, 1)$
\begin{equation}\label{e:140.1}
| \tilde \lambda - \lambda| \leq \bigg( \frac{2 \, C_{E, \alpha, R}}{m_{1}^{2}}\bigg)^{1/( 2 - \alpha)}\, \tau_{k}^{1/( 2 - \alpha)} \,;
\end{equation}

\item[$(ii)$] if $\tilde{\lambda} \in \Lambda^{\delta}_{n}$, then
\begin{equation}\label{e:140.2}
|\tilde{\lambda} - \lambda | \leq \frac{2 \, L_{E, \delta, R}}{m_{1}^{2}} \, \tau_{k}\,;
\end{equation}

\item[$(iii)$] if~$\lambda, \tilde{\lambda} \in \Lambda^{\delta}_{n}$ and $\mu$ is the unique solution to
\begin{equation}\label{e:145}
\min\, \bigg\{ E(x, \rho, \Psi) + \frac{1}{2\tau_{k}} \| \rho - \lambda\| _{G(x, \tilde{\lambda}, \Psi)}^{2} : \, \rho \in \overline{\Lambda}_{n} \bigg\} \,,
\end{equation}
then, for every $\alpha \in (0,1)$ we have
\begin{equation}\label{e:140.3}
| \mu - \lambda| \leq \bigg( \frac{2 \, C_{E, \alpha, R}}{m_{1}^{2}}\bigg)^{1/( 2 - \alpha)}\, \tau_{k}^{1/( 2 - \alpha)} \,.
\end{equation}
If, in addition, $\mu \in \Lambda^{\delta}_{n}$, then
\begin{equation}\label{e:146}
| \mu - \lambda | \leq \frac{2\, L_{E, \delta, R}}{m_{1}^{2}} \, \tau_{k}\,.
\end{equation}
Finally, if $\lambda, \tilde{\lambda} \in \Lambda^{\delta}_{n}$ satisfy~\eqref{e:134}, there exists a positive constant $C = C (\delta, R)$ such that
\begin{align}
& \bigg | \frac{\tilde{\lambda} - \lambda}{ \tau_{k}} - \Qq (x, \Psi)\tilde{\lambda}  \bigg | \leq C \tau_{k}^{1/4} \,. \label{e:142}
\end{align}
\end{itemize}
\end{proposition}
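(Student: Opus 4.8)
The plan is as follows. Estimates~\eqref{e:140.3} and~\eqref{e:146} are the frozen-metric counterparts of~\eqref{e:140.1} and~\eqref{e:140.2} and follow by the same argument: testing the minimality of $\mu$ in~\eqref{e:145} against $\rho=\lambda$ gives $\tfrac{1}{2\tau_{k}}\|\mu-\lambda\|^{2}_{G(x,\tilde{\lambda},\Psi)}\le E(x,\lambda,\Psi)-E(x,\mu,\Psi)$; one bounds the right-hand side by $C_{E,\alpha,R}|\mu-\lambda|^{\alpha}$ via~\eqref{e:E2} (respectively by $L_{E,\delta,R}|\mu-\lambda|$ via~\eqref{e:E1} when $\mu\in\Lambda^{\delta}_{n}$) and the left-hand side from below by $m_{1}^{2}|\mu-\lambda|^{2}$ using~\eqref{e:G1} together with $m_{1}^{2}=c_{1}$ (Lemma~\ref{l:distance}$(i)$). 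For the substantive claim~\eqref{e:142} I would interpose the solution $\mu$ of~\eqref{e:145} and write
\[
\frac{\tilde{\lambda}-\lambda}{\tau_{k}}-\Qq(x,\Psi)\tilde{\lambda}=\frac{\tilde{\lambda}-\mu}{\tau_{k}}+\Bigl(\frac{\mu-\lambda}{\tau_{k}}-\Qq(x,\Psi)\tilde{\lambda}\Bigr),
\]
handling the first summand by a quantitative stability estimate for the minimizing-movement step when the geodesic distance $\sfd_{(x,\Psi)}$ is replaced by the frozen quadratic form $\|\cdot-\lambda\|^{2}_{G(x,\tilde{\lambda},\Psi)}$, and the second by the Euler--Lagrange equation of~\eqref{e:145} combined with the Maas--Mielke gradient-flow identity.

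First I would localize. By convexity of $E(x,\cdot,\Psi)$ on $\overline{\Lambda}_{n}$ and the bound $|\nabla_{\rho}E(x,\lambda,\Psi)|\le C(\delta,R)$ (valid since $\lambda\in\Lambda^{\delta}_{n}$ and $\sigma_{h}(x,\Psi)\ge\eta(R)$ by Lemma~\ref{l:estimate}$(i)$), the minimality of $\tilde{\lambda}$ and of $\mu$ forces $E(x,\lambda,\Psi)-E(x,\tilde{\lambda},\Psi)\le C(\delta,R)|\lambda-\tilde{\lambda}|$ and $E(x,\lambda,\Psi)-E(x,\mu,\Psi)\le C(\delta,R)|\lambda-\mu|$; combined with $\sfd_{(x,\Psi)}(\cdot,\lambda)^{2}\ge m_{1}^{2}|\cdot-\lambda|^{2}$ and~\eqref{e:G1}, this gives $|\tilde{\lambda}-\lambda|,|\mu-\lambda|\le C(\delta,R)\tau_{k}$, so $\tilde{\lambda},\mu\in\Lambda^{\delta/2}_{n}$ for $\tau_{k}$ small uniformly in the data, with displacements much smaller than $\dist(\cdot,\partial\Lambda^{\delta/2}_{n})$; hence all estimates of Lemmas~\ref{l:estimate},~\ref{l:distance} (notably~\eqref{e:d3},~\eqref{e:d4},~\eqref{e:G3}) and the Euler--Lagrange equation below are legitimate with $\delta/2$ in place of $\delta$. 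Now applying Lemma~\ref{lemmafacile} to $\mu$ with $A=G(x,\tilde{\lambda},\Psi)$ (on the affine hull of the simplex, where $E$ is convex and $A$ positive definite), $\mu$ minimizes also $E(x,\rho,\Psi)+\tfrac{1}{2\tau_{k}}\|\rho-\lambda\|^{2}_{G(x,\tilde{\lambda},\Psi)}-\tfrac{1}{2\tau_{k}}\|\rho-\mu\|^{2}_{G(x,\tilde{\lambda},\Psi)}$; evaluating at $\rho=\tilde{\lambda}$ and using the minimality of $\tilde{\lambda}$ in~\eqref{e:140} to bound $E(x,\tilde{\lambda},\Psi)-E(x,\mu,\Psi)\le\tfrac{1}{2\tau_{k}}\bigl(\sfd^{2}_{(x,\Psi)}(\mu,\lambda)-\sfd^{2}_{(x,\Psi)}(\tilde{\lambda},\lambda)\bigr)$ yields
\[
\|\tilde{\lambda}-\mu\|^{2}_{G(x,\tilde{\lambda},\Psi)}\le\bigl[\sfd^{2}_{(x,\Psi)}(\mu,\lambda)-\|\mu-\lambda\|^{2}_{G(x,\tilde{\lambda},\Psi)}\bigr]+\bigl[\|\tilde{\lambda}-\lambda\|^{2}_{G(x,\tilde{\lambda},\Psi)}-\sfd^{2}_{(x,\Psi)}(\tilde{\lambda},\lambda)\bigr].
\]
Both brackets are $O(\tau_{k}^{5/2})$: the second since $\|\tilde{\lambda}-\lambda\|_{G(x,\tilde{\lambda},\Psi)}\le\sfd_{(x,\Psi)}(\tilde{\lambda},\lambda)+m_{4}|\tilde{\lambda}-\lambda|^{3/2}$ by~\eqref{e:d4} with $\sfd_{(x,\Psi)}(\tilde{\lambda},\lambda)\lesssim|\tilde{\lambda}-\lambda|$ by~\eqref{e:d5}; the first since $\sfd_{(x,\Psi)}(\mu,\lambda)\le\|\mu-\lambda\|_{G(x,\mu,\Psi)}+m_{3}|\mu-\lambda|^{3/2}$ by~\eqref{e:d3} and the base-point change $G(x,\mu,\Psi)$ to $G(x,\tilde{\lambda},\Psi)$ costs $\le L_{G}|\mu-\tilde{\lambda}|\,|\mu-\lambda|^{2}\lesssim\tau_{k}^{3}$ by~\eqref{e:G3}; throughout using $|\tilde{\lambda}-\lambda|,|\mu-\lambda|\lesssim\tau_{k}$. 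By~\eqref{e:G1} this gives $|\tilde{\lambda}-\mu|^{2}\lesssim\tau_{k}^{5/2}$, hence $|\tilde{\lambda}-\mu|/\tau_{k}\lesssim\tau_{k}^{1/4}$.

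For the second summand, $\mu\in\Lambda^{\delta/2}_{n}$ is interior and $E(x,\cdot,\Psi)$ is smooth there, so the Euler--Lagrange equation of~\eqref{e:145} on the simplex — projected onto $\R^{n}_{0}$ and with $G$ inverted, the constant vector $(1,\dots,1)\in\ker K(x,\tilde{\lambda},\Psi)$ absorbing the Lagrange multiplier — reads $(\mu-\lambda)/\tau_{k}=-K(x,\tilde{\lambda},\Psi)\nabla_{\rho}E(x,\mu,\Psi)$. The Maas--Mielke gradient-flow structure for reversible chains~\cite{Maas,Mielke} (the rationale behind~\eqref{e:E}--\eqref{e:K}) gives the pointwise identity $\Qq(x,\Psi)\tilde{\lambda}=-K(x,\tilde{\lambda},\Psi)\nabla_{\rho}E(x,\tilde{\lambda},\Psi)$, whence
\[
\frac{\mu-\lambda}{\tau_{k}}-\Qq(x,\Psi)\tilde{\lambda}=-K(x,\tilde{\lambda},\Psi)\bigl[\nabla_{\rho}E(x,\mu,\Psi)-\nabla_{\rho}E(x,\tilde{\lambda},\Psi)\bigr],
\]
and the right-hand side is bounded by $c_{2}(R)$ (the operator bound~\eqref{e:K1}, using $(1,\dots,1)\in\ker K$) times the local Lipschitz constant of $\nabla_{\rho}E(x,\cdot,\Psi)$ on $\Lambda^{\delta/2}_{n}$ times $|\mu-\tilde{\lambda}|\lesssim\tau_{k}^{5/4}$. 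Adding the two estimates proves~\eqref{e:142} with $C=C(\delta,R)$.

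The main obstacle is the stability estimate: one must certify that replacing $\sfd_{(x,\Psi)}$ by the frozen quadratic form $\|\cdot-\lambda\|^{2}_{G(x,\tilde{\lambda},\Psi)}$ in the implicit step perturbs the minimizer by only $O(\tau_{k}^{5/4})$. This is exactly where the cubic defect $O(|\cdot|^{3/2})$ of Lemma~\ref{l:distance}$(ii)$--$(iii)$ and the Lipschitz dependence of $G$ on its middle argument (Lemma~\ref{l:estimate}$(iv)$) enter, and the bookkeeping that keeps every discrepancy at $O(\tau_{k}^{5/2})$ — which is what produces the exponent $1/4$ in~\eqref{e:142} and not a larger power — has to be done with care. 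A secondary but indispensable point is the a priori localization of both $\tilde{\lambda}$ and $\mu$ in $\Lambda^{\delta/2}_{n}$, needed to write the Euler--Lagrange equation and to use the Lipschitz bounds for $\nabla_{\rho}E$ and $G$; this rests on the convexity of the entropy and on $\nabla_{\rho}E$ being bounded at the interior point $\lambda\in\Lambda^{\delta}_{n}$.
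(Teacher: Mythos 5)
Your proposal is correct and follows essentially the route of the paper's proof: parts $(i)$ and $(ii)$ (and their frozen-metric analogues) come from testing minimality against $\rho=\lambda$, comparing $\sfd^2_{(x,\Psi)}$ or $\|\cdot\|^2_{G}$ with the Euclidean norm via Lemmas~\ref{l:estimate}--\ref{l:distance}; the bound $|\mu-\tilde\lambda|\lesssim\tau_k^{5/4}$ comes from Lemma~\ref{lemmafacile} with $A=G(x,\tilde\lambda,\Psi)$ plus the cubic-defect comparisons \eqref{e:d3}--\eqref{e:d4}; and \eqref{e:142} follows from the Euler--Lagrange equation of \eqref{e:145} together with the Maas--Mielke identity $K\,D_\lambda E=-\Qq\lambda$.

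There are three small points where you deviate, all harmless or slight improvements. First, you localize $\tilde\lambda,\mu$ in $\Lambda^{\delta/2}_n$ from scratch, exploiting convexity of $E$ and boundedness of $\nabla_\rho E(x,\lambda,\Psi)$ at the interior point $\lambda$; the paper instead relies on $\mu\in\Lambda^\delta_n$ as an implicit hypothesis (as certified separately in Lemma~\ref{l:time}), and derives the short-step estimate from the H\"older/Lipschitz bounds \eqref{e:E1}--\eqref{e:E2}. Your version makes the proposition self-contained. Second, when bounding $\sfd^2_{(x,\Psi)}(\mu,\lambda)-\|\mu-\lambda\|^2_{G(x,\tilde\lambda,\Psi)}$ you explicitly account for the base-point change $G(x,\mu,\Psi)\to G(x,\tilde\lambda,\Psi)$ via \eqref{e:G3}; the paper's display \eqref{e:149} silently writes $G(x,\tilde\lambda,\Psi)$ directly where \eqref{e:d3} gives $G(x,\mu,\Psi)$, tacitly absorbing an $O(\tau_k^3)$ defect. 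You are more careful there. Third, in the final Euler--Lagrange step the paper tests the first-order condition for $\mu$ against $\xi=K(x,\mu,\Psi)^\top\omega$ and invokes the Mielke identity at $\mu$, then swaps $\Qq\mu$ for $\Qq\tilde\lambda$ and moves $G$ from $\tilde\lambda$ to $\mu$; you instead invert the constraint metric $G(x,\tilde\lambda,\Psi)$ directly, invoke the Mielke identity at $\tilde\lambda$, and absorb the discrepancy into a single Lipschitz bound on $\nabla_\rho E$. This is equivalent, involves fewer moving parts, and produces the same $\tau_k^{1/4}$ rate (dominated by $|\tilde\lambda-\mu|/\tau_k$).
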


\begin{proof}
By the minimality of~$\tilde{\lambda}$, by~$(vi)$ of Lemma~\ref{l:estimate}, and by~$(i)$ of Lemma~\ref{l:distance} we have that for every $\alpha \in (0, 1)$
\begin{equation}\label{e:145.1}
\frac{m_{1}^{2}}{2\tau_{k}} | \tilde{\lambda} - \lambda|^{2} \leq \big| E(x, \lambda, \Psi) - E(x, \tilde \lambda, \Psi) \big | \leq C_{E, \alpha, R} | \tilde{\lambda} - \lambda| ^{\alpha},
\end{equation}
where~$m_{1} = m_{1}(R)$ and~$C_{E, \alpha, R}$ are defined in Lemmas~\ref{l:estimate} and~\ref{l:distance}, respectively. From~\eqref{e:145.1} we deduce~\eqref{e:140.1}. In a similar way we deduce~\eqref{e:140.3}, recalling that $m_{1} = \sqrt{c_{1}}$, where~$c_{1}$ has been determined in~\eqref{e:G1}.

If we further assume that~$\tilde\lambda \in \Lambda^{\delta}_{n}$, by minimality of~$\tilde\lambda$, by $(v)$ of Lemma~\ref{l:estimate}, and by~$(i)$ of Lemma~\ref{l:distance}, we have that
\begin{displaymath}
\frac{m^{2}_{1}}{2\tau_{k}} | \lambda - \tilde{\lambda} |^{2} \leq  \frac{1}{2\tau_{k}} \, \sfd^{2}_{(x, \Psi)} (\lambda, \tilde{\lambda} ) \leq | E(x, \lambda, \Psi) - E(x, \tilde{\lambda}, \Psi) | \leq L_{E, \delta, R} | \lambda - \tilde{\lambda} |\,.
\end{displaymath} 
Hence, we deduce~\eqref{e:140.2}. Moreover, if $\mu \in \Lambda^{\delta}_{n}$, in the very same way we get~\eqref{e:146}.

In order to prove~\eqref{e:142}, we first estimate the Euclidean norm $| \mu - \tilde{\lambda}|$. 
Denote by $\chi_{\overline{\Lambda}_{n}}$ the characteristic function of the convex set $\overline{\Lambda}_{n}$ in the sense of convex analysis.
Since $E(x, \cdot, \Psi)$ is convex in~$\overline{\Lambda}_{n}$, we can apply Lemma~\ref{lemmafacile} with $f(\cdot)=E(x, \cdot, \Psi)+\chi_{\overline{\Lambda}_{n}}(\cdot)$, $\xi_0=\mu$, $c=\frac1{2\tau_k}$, $\zeta=\lambda$, and $A=G(x, \tilde{\lambda}, \Psi)$ obtaining
\begin{displaymath}
\begin{split}
E(x, \mu, \Psi) + \frac{1}{2\tau_{k}} \| \mu - \lambda \|_{G(x, \tilde{\lambda}, \Psi)}^{2} + \frac{1}{2\tau_{k}} \| \mu - \tilde{\lambda} \|_{G(x, \tilde{\lambda}, \Psi)}^{2} \leq E(x, \tilde{\lambda}, \Psi) + \frac{1}{2\tau_{k}} \| \tilde{\lambda} - \lambda\|_{G(x, \tilde{\lambda}, \Psi) }^{2} \,.
\end{split}
\end{displaymath}
Re-ordering the terms in the previous inequality and adding and subtracting on the right-hand side the terms $\frac{1}{2\tau_{k}}  \sfd^{2}_{(x, \Psi)} (\tilde{\lambda}, \lambda)$ and~$\frac{1}{2\tau_{k}}  \sfd^{2}_{(x, \Psi)} ( \mu , \lambda)$ we obtain
\begin{equation}\label{e:147}
\begin{split}
\frac{1}{2\tau_{k}} \| \mu - \tilde{\lambda} \|_{G(x, \tilde{\lambda}, \Psi)}^{2}  \leq & \ E(x, \tilde{\lambda}, \Psi) + \frac{1}{2\tau_{k}}  \sfd^{2}_{(x, \Psi)} (\tilde{\lambda}, \lambda) - E(x, \mu, \Psi) -  \frac{1}{2\tau_{k}}  \sfd^{2}_{(x, \Psi)} ( \mu , \lambda)  
\\
&
- \frac{1}{2\tau_{k}} \| \mu - \lambda \|_{G(x, \tilde{\lambda}, \Psi)}^{2}+ \frac{1}{2\tau_{k}} \| \tilde{\lambda} - \lambda\|_{G(x, \tilde{\lambda}, \Psi) }^{2} 
\\
&
+ \frac{1}{2\tau_{k}}  \sfd^{2}_{(x, \Psi)} ( \mu , \lambda) - \frac{1}{2\tau_{k}}  \sfd^{2}_{(x, \Psi)} (\tilde{\lambda}, \lambda)\,.
\end{split}
\end{equation}
By the minimality of~$\tilde{\lambda}$, inequality~\eqref{e:147} simplifies to
\begin{equation}\label{e:148}
\| \mu - \tilde{\lambda} \|_{G(x, \tilde{\lambda}, \Psi)}^{2}  \leq   \| \tilde{\lambda} - \lambda\|_{G(x, \tilde{\lambda}, \Psi) }^{2}  - \| \mu - \lambda \|_{G(x, \tilde{\lambda}, \Psi)}^{2}
 +   \sfd^{2}_{(x, \Psi)} ( \mu , \lambda) -  \sfd^{2}_{(x, \Psi)} (\tilde{\lambda}, \lambda)\,.
\end{equation}
Since $x \in \B_{R}$, $\Psi \in \Pp(\B^{Y}_{R})$, $\lambda, \tilde{\lambda}, \mu \in \Lambda^{\delta}_{n}$, and~$\lambda, \tilde\lambda$ satisfy~\eqref{e:134}, we deduce from~\eqref{e:148}, from~$(ii)$ of Lemma~\ref{l:estimate}, and from $(ii)$--$(iii)$ of Lemma~\ref{l:distance} that
\begin{equation}\label{e:149}
\begin{split}
c_{1}^{2} | \mu - \tilde\lambda | ^{2} \leq & \ \left( d_{(x, \Psi)} (\tilde \lambda, \lambda) + m_{4} | \tilde \lambda - \lambda |^{\frac32} \right) ^{2} + \left( \| \mu - \lambda \|_{G(x, \tilde\lambda, \Psi)} + m_{3} | \mu - \lambda |^{\frac32} \right)^{2}
\\
&
 \vphantom{\Big(} - \| \mu - \lambda \|_{G(x, \tilde{\lambda}, \Psi)}^{2} - \sfd^{2}_{(x, \Psi)} (\tilde{\lambda}, \lambda)\,.
\end{split}
\end{equation}
Developing the squares and using~$(iii)$ of Lemma~\ref{l:estimate} and~$(ii)$ of Lemma~\ref{l:distance}, we continue in~\eqref{e:149} with
\begin{equation}\label{e:150}
c_{1}^{2} | \mu - \tilde \lambda | ^{2} \leq   m_{4}^{2}  | \tilde{\lambda} - \lambda | ^{3} +  m_{3}^{2} | \mu - \lambda |^{3} + 2 \,m_{2} \, m_{4} | \tilde{\lambda} - \lambda |^{\frac52} + 2\, \sqrt{c_{3}} \, m_{3} | \mu - \lambda |^{\frac52} \,.
\end{equation}
Combining~\eqref{e:150} with~\eqref{e:140.2} and~\eqref{e:146} we deduce
\begin{equation}\label{e:153}
| \mu - \tilde \lambda | \leq \widetilde{C} \tau_{k}^{5/4} \,.
\end{equation}
for some positive constant~$\widetilde{C} = \widetilde{C} (\delta, R)$ independent of~$k$.

We are now in a position to conclude~\eqref{e:142}. The minimality of~$\mu$, indeed, implies that for every $\xi \in \R^{n}_{0}$
\begin{displaymath}
\left\langle D_{\lambda} E(x, \mu, \Psi) , \xi \right\rangle + \frac{1}{\tau_{k}} \big\langle G(x, \tilde{\lambda}, \Psi) (\mu - \lambda) , \xi \big\rangle = 0 \,.
\end{displaymath}
By a simple algebraic manipulation, we rewrite the previous equality as
\begin{equation}\label{e:151}
\begin{split}
\left\langle D_{\lambda} E(x, \mu, \Psi) , \xi \right\rangle & + \frac{1}{\tau_{k}} \big\langle G(x, \mu , \Psi) (\tilde{\lambda} - \lambda) , \xi \big\rangle 
\\
&
\hspace{-5mm} = \frac{1}{\tau_{k}}  \big\langle G(x, \tilde\lambda , \Psi) ( \tilde{\lambda} - \mu ) , \xi \big\rangle 
+ \frac{1}{\tau_{k}}  \big\langle \big(G(x, \mu , \Psi) - G(x, \tilde{\lambda}, \Psi) \big) ( \tilde{\lambda} - \lambda ) , \xi \big\rangle.
\end{split}
\end{equation}
Taking~$\xi = K^{\top}(x, \mu, \Psi) \omega$ for $\omega \in \R^{n}$ in~\eqref{e:151}, we get that
\begin{displaymath}
\begin{split}
K(x, \mu, \Psi)\,D_{\lambda} E(x, \mu, \Psi) + \frac{1}{\tau_k} ( \tilde{\lambda} - \lambda)= & \frac{1}{\tau_{k}}  K(x, \mu, \Psi)\, G(x, \tilde\lambda , \Psi) ( \tilde{\lambda} - \mu )
\\
& + \frac{1}{\tau_{k}}   K(x, \mu, \Psi)\,\big(G(x, \mu , \Psi) - G(x, \tilde{\lambda}, \Psi) \big) ( \tilde{\lambda} - \lambda ) .
\end{split}
\end{displaymath}
Since $K(x, \mu, \Psi) D_{\lambda} E(x, \mu, \Psi) = - \Qq (x, \Psi) \mu$ (see~\cite[Theorem~3.1]{Mielke}), we actually have
\begin{equation}\label{e:152}
\begin{split}
  - \Qq(x, \Psi) \tilde\lambda + \frac{1}{\tau_k} \big( \tilde{\lambda} - \lambda\big) = &  \frac{1}{\tau_{k}}   K(x, \mu, \Psi)\, G(x, \tilde\lambda , \Psi)  \big( \tilde{\lambda} - \mu \big)  
\\
&
 + \frac{1}{\tau_{k}}  K(x, \mu, \Psi)\,\big(G(x, \mu , \Psi) - G(x, \tilde{\lambda}, \Psi) \big) ( \tilde{\lambda} - \lambda ) 
 \\
 &
   \vphantom{\frac12}+  \Qq(x, \Psi) ( \tilde\lambda - \mu)  \,.
\end{split}
\end{equation}
Combining~$(ii)$--$(iv)$ of Lemma~\ref{l:estimate} with the inequalities~\eqref{e:140.2},~\eqref{e:146},~\eqref{e:153}, and~\eqref{e:152}, and with the assumptions~$x \in \B_{R}$, $\Psi \in \Pp(\B^{Y}_{R})$, and $ \tilde{\lambda}, \mu \in \Lambda^{\delta}_{n}$, we get~\eqref{e:142}, and therefore the proof is concluded.
\end{proof}

\begin{lemma}\label{l:time}
Let $r > 0$, $\eta > \delta >0$, and $\widehat{\Psi} \in \Pp( \B^{Y}_{r, \eta})$. Then, there exists $T_{f} >0$ such that for every $k \in \mathbb{N}$ large enough and every $t < T_{f}$ the following hold:
\begin{itemize}
\item[$(i)$] $\Psi^{k}(t), \underline{\Psi}^{k}(t), \widetilde{\Psi}^{k}(t) \in \Pp (\B^{Y}_{R(T_{f}), \delta})$, where~$R \colon [0,+\infty) \to [0, +\infty)$ is the function determined in Lemma~\ref{l:Gronwall3};

\item[$(ii)$] if $t \in[t^{k}_{i}, t^{k}_{i + 1})$ for some $i \in \mathbb{N}$, for every $(x, \lambda) \in \spt\,  \underline{\Psi}^{k} (t)$
\begin{displaymath}
\sqrt{\frac{c_{3}( \delta, R(T_{f}))}{c_{1}(R(T_{f}))}} \, | \Lambda^{k}_{i+1} (t, x, \lambda) - \lambda | < \min \, \left \{ \dist (\lambda, \partial\Lambda^{\delta}_{n}) , \dist (\Lambda^{k}_{i+1} (t, x, \lambda), \partial\Lambda^{\delta}_{n}) \right \}\,.
\end{displaymath}
\end{itemize}
\end{lemma}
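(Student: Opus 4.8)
The plan is to fix the final time $T_{f}$ small and then show, for $k$ large, that the labels of \emph{all} agents stay in a simplex $\Lambda^{\delta_{1}}_{n}$ strictly bigger than $\Lambda^{\delta}_{n}$, with $\delta<\delta_{1}<\eta$; property~(ii) is then a soft consequence, because points of $\Lambda^{\delta_{1}}_{n}$ lie at distance at least $\delta_{1}-\delta$ from $\partial\Lambda^{\delta}_{n}$, whereas the one-step label increment turns out to be $O(\tau_{k})$. First I would set $\delta_{1}\coloneqq\frac{\eta+\delta}{2}$ (so $\spt\widehat{\Psi}\subseteq\B^{Y}_{r,\eta}\subseteq\B^{Y}_{r,\delta_{1}}$), invoke Lemma~\ref{l:Gronwall3} to get the increasing function $R(\cdot)$ and — since only horizons $\le 1$ will be needed — freeze $R\coloneqq R(1)$ together with the constants $\kappa\coloneqq\frac{2L_{E,\delta,R}}{m_{1}(R)^{2}}$ from~\eqref{e:140.2} and $\kappa'\coloneqq\bigl(\frac{2C_{E,\alpha,R}}{m_{1}(R)^{2}}\bigr)^{1/(2-\alpha)}$ from~\eqref{e:140.1} (for some fixed $\alpha\in(0,1)$), all finite by Lemmas~\ref{l:estimate} and~\ref{l:distance}. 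I would then pick $T_{f}\in(0,\tfrac12]$ with $2\kappa T_{f}\le\frac{\eta-\delta}{2}$ and, afterwards, $k$ so large that $\tau_{k}\le T_{f}$ and $\kappa'\tau_{k}^{1/(2-\alpha)}<\delta_{1}-\delta$.

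The heart of the argument is an induction on the step index. For $(x_{0},\lambda_{0})\in\spt\widehat{\Psi}$ write $\lambda^{k}(t^{k}_{j})$, $x^{k}(t^{k}_{j})$ for the nodal values of the scheme and $\tilde{\lambda}_{j}=\lambda^{k}(t^{k}_{j+1})$ for the minimizer in~\eqref{e:Markov_step} at step $j$; the claim is that $\lambda^{k}(t^{k}_{j})\in\Lambda^{\delta_{1}}_{n}$ whenever $t^{k}_{j}\le T_{f}+\tau_{k}$. It holds for $j=0$ since $\lambda_{0}\in\Lambda^{\eta}_{n}$. Assuming it up to index $i$ (with $t^{k}_{i}\le T_{f}$): for every interior step $j<i$ both $\lambda^{k}(t^{k}_{j}),\lambda^{k}(t^{k}_{j+1})$ lie in $\Lambda^{\delta_{1}}_{n}\subseteq\Lambda^{\delta}_{n}$, so Proposition~\ref{p:Markov_euler}(ii) — its hypotheses guaranteed by Lemma~\ref{l:Gronwall3} — gives $|\lambda^{k}(t^{k}_{j+1})-\lambda^{k}(t^{k}_{j})|\le\kappa\tau_{k}$; for the step $j=i$ the updated label is not yet controlled, and the hard part is precisely this: I would first use the \emph{unconditional} bound of Proposition~\ref{p:Markov_euler}(i), $|\lambda^{k}(t^{k}_{i+1})-\lambda^{k}(t^{k}_{i})|\le\kappa'\tau_{k}^{1/(2-\alpha)}<\delta_{1}-\delta$, to deduce from $\lambda^{k}(t^{k}_{i})\in\Lambda^{\delta_{1}}_{n}$ that $\lambda^{k}(t^{k}_{i+1})\in\Lambda^{\delta}_{n}$, and only then apply Proposition~\ref{p:Markov_euler}(ii) again to get $|\lambda^{k}(t^{k}_{i+1})-\lambda^{k}(t^{k}_{i})|\le\kappa\tau_{k}$. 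Summing the $i+1$ increments,
\[
\bigl|\lambda^{k}(t^{k}_{i+1})-\lambda_{0}\bigr|\le(i+1)\,\kappa\tau_{k}=\kappa\,t^{k}_{i+1}\le 2\kappa T_{f}\le\frac{\eta-\delta}{2}\,,
\]
hence $\lambda^{k}(t^{k}_{i+1})_{h}\ge\eta-\frac{\eta-\delta}{2}=\delta_{1}$ for every $h$, i.e.\ $\lambda^{k}(t^{k}_{i+1})\in\Lambda^{\delta_{1}}_{n}$, which closes the induction. (The whole role of the buffer $\delta_{1}$ is to break the circularity that the $O(\tau_{k})$ estimate~\eqref{e:140.2} presupposes the updated label to be already in $\Lambda^{\delta}_{n}$; this is the only genuinely delicate point of the proof.)

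Finally I would read off (i) and (ii). Every nodal value reached for $t^{k}_{j}\le T_{f}+\tau_{k}$ lies in $\Lambda^{\delta_{1}}_{n}$, and by~\eqref{100} the value $\Lambda^{k}_{i+1}(t,x,\lambda)$ is a convex combination of two such values, hence $\Lambda^{k}_{i+1}(t,x,\lambda)\in\Lambda^{\delta_{1}}_{n}$ for $t\in[t^{k}_{i},t^{k}_{i+1})$, $t<T_{f}$; combining with Lemma~\ref{l:Gronwall3}, the curves $\Psi^{k}(t),\underline{\Psi}^{k}(t),\widetilde{\Psi}^{k}(t)$ are supported in $\B^{Y}_{R(T_{f})}\cap(\R^{d}\times\Lambda^{\delta_{1}}_{n})\subseteq\B^{Y}_{R(T_{f}),\delta}$, which is~(i). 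For~(ii), take $t\in[t^{k}_{i},t^{k}_{i+1})$ with $t<T_{f}$ and $(x,\lambda)\in\spt\underline{\Psi}^{k}(t)=\spt\Psi^{k}_{i}$; then $\lambda\in\Lambda^{\delta_{1}}_{n}$ and $\Lambda^{k}_{i+1}(t,x,\lambda)\in\Lambda^{\delta_{1}}_{n}$, so, every point of $\partial\Lambda^{\delta}_{n}$ having some coordinate equal to $\delta$,
\[
\dist(\lambda,\partial\Lambda^{\delta}_{n})\ge\delta_{1}-\delta\,,\qquad\dist\bigl(\Lambda^{k}_{i+1}(t,x,\lambda),\partial\Lambda^{\delta}_{n}\bigr)\ge\delta_{1}-\delta\,,
\]
whereas $|\Lambda^{k}_{i+1}(t,x,\lambda)-\lambda|\le|\tilde{\lambda}_{i}-\lambda|\le\kappa\tau_{k}$ by~\eqref{e:140.2} (legitimate since $\lambda,\tilde{\lambda}_{i}\in\Lambda^{\delta_{1}}_{n}\subseteq\Lambda^{\delta}_{n}$); therefore $\sqrt{c_{3}(\delta,R(T_{f}))/c_{1}(R(T_{f}))}\,\bigl|\Lambda^{k}_{i+1}(t,x,\lambda)-\lambda\bigr|<\delta_{1}-\delta$ for $k$ large, which is the strict inequality required in~(ii). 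Everything beyond the bootstrap above is routine bookkeeping with the constants of Lemmas~\ref{l:estimate}, \ref{l:distance} and Lemma~\ref{l:Gronwall3} (including the harmless shift of the time horizon by $\tau_{k}$).
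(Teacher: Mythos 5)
Your proof is correct and rests on the same two ideas the paper uses: (a) a bootstrap in which the uncontrolled last step is first tamed by the unconditional H\"older bound~\eqref{e:140.1} so that the sharper Lipschitz bound~\eqref{e:140.2} becomes legitimate, and (b) an induction on the step index keeping all nodal labels in a fixed sub-simplex. The differences are organisational rather than substantive: you introduce the explicit buffer level $\delta_{1}=(\eta+\delta)/2$ and freeze the constants at $R=R(1)$ once and for all, so that you can declare $T_{f}$ a priori; the paper instead works directly with $\Lambda^{\delta}_{n}$ and tracks step-dependent constants $L(j,k)$, $C(j,k)$, $m_{1}(j,k)$ through the two running conditions \eqref{e:160}--\eqref{e:162}, deducing $T_{f}$ as a lower bound on the first $i_{k}\tau_{k}$ where those conditions fail. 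Your variant avoids the bookkeeping with the running sums (and is actually slightly stronger, landing in $\Lambda^{\delta_{1}}_{n}\subsetneq\Lambda^{\delta}_{n}$), at the minor cost of a halved budget; one small presentational gap is that the requirement $\sqrt{c_{3}/c_{1}}\,\kappa\tau_{k}<\delta_{1}-\delta$ used to close property~(ii) should be added explicitly to your list of smallness conditions on $k$ rather than left as ``for $k$ large.''
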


\begin{proof}
Since $\widehat{\Psi} \in \Pp (\B_{r, \eta}^{Y})$, we deduce from Lemma~\ref{l:Gronwall3} that for every $T>0$, every $k \in \mathbb{N}$, and every $i$ such that $i \tau_{k} \leq T$ we have~$\Psi^{k}_{i}, \widetilde{\Psi}^{k}_{i} \in \Pp (\B^{Y}_{R(T)})$. Hence, in order to conclude the lemma we have to study the behavior of the labels~$\lambda \in \overline{\Lambda}_{n}$ along the alternating scheme.

Along the proof of the lemma we denote by~$\lambda^{k}(t, x_{0}, \lambda_{0})$ and~$x^{k}(t, x_{0}, \lambda_{0})$, for $(x_{0}, \lambda_{0}) \in \spt\, \widehat{\Psi}$,  the curves obtained by iteratively solving~\eqref{e:Markov_step} and the difference equation~\eqref{e:ODE2} in each interval~$[t^{k}_{i}, t^{k}_{i+1}]$ starting from~$(x_{0}, \lambda_{0})$ at time $t_0=0$ and using, at each node~$t^{k}_{i}$, $\hat \lambda = \lambda^{k}(t^{k}_{i}, x_{0}, \lambda_{0})$ and~$\hat x = x^{k}(t^{k}_{i}, x_{0}, \lambda_{0})$ as new initial conditions. As in~\eqref{e:110}, we define the piecewise constant interpolations~$\overline{x}^{k} (t, x_{0}, \lambda_{0}), \underline{x}^{k} (t, x_{0}, \lambda_{0})$ and~$\overline{\lambda}^{k} (t, x_{0}, \lambda_{0}), \underline{\lambda}^{k}(t, x_{0}, \lambda_{0})$.

The assumption~$\widehat{\Psi} \in \Pp(\B^{Y}_{r, \eta})$ means that for every $(x_{0}, \lambda_{0}) \in \spt \widehat{\Psi}$ we have $\lambda_{0} \in \Lambda^{\eta}_{n}$. Since the measures~$\Psi^{k}(t)$ and~$\widetilde{\Psi}^{k}(t)$ are supported on pairs of the form $(x^{k} (t, x_{0}, \lambda_{0}), \lambda^{k} (t, x_{0}, \lambda_{0}))$ and $(\underline{x}^{k}(t, x_{0}, \lambda_{0}), \overline{\lambda}^{k}(t, x_{0}, \lambda_{0}))$, respectively, we are led to estimate the number of steps needed by~\eqref{e:Markov_step} to exit~$\Lambda^{\delta}_{n}$, knowing that the initial label~$\lambda_{0} \in \Lambda^{\eta}_{n}$. 

Let us fix~$\overline{\alpha} \in (0,1)$. For~$k \in \mathbb{N}$ such that $\tau_{k} \leq 1$, we claim that the properties~$(i)$ and~$(ii)$ hold with $R = R( t^{k}_{i})$ for every $t \in [0, t^{k}_{i}]$ until the following conditions are fulfilled:
\begin{align}
& \sum_{j = i}^{i-1} \frac{2 \, L(j-1, k)}{m_{1}^{2} (j-1, k)} \, \tau_{k} + \left( \frac{2 \, C(i-1, k) }{m_{1}^{2}(i-1, k)}  \right)^{1/(2 - \overline \alpha)}  \sqrt{\tau_{k}} < \eta - \delta \,,\label{e:160} \\[1mm]
& \sum_{j = i}^{i-1} \frac{2 \, L(j-1, k)}{m_{1}^{2} (j-1, k)} \, \tau_{k} + \frac{2 \, C(i-1, k)}{m_{1}^{2}(i-1, k)}  \left(\frac{c_{3} (i-1, k)}{c_{1}(i-1, k) }\right)^{1/2} \tau_{k} < \eta - \delta\,, \label{e:162}
\end{align}
where we have set $L(j, k) \coloneqq L_{E, \delta, R(j\tau_{k})}$, $C(j, k) \coloneqq C_{E, \overline \alpha, R(j\tau_{k})}$, $m_{1}(j, k) \coloneqq m_{1}(R(j\tau_{k}))$, $c_{1}(j, k) \coloneqq c_{1} (R(j\tau_{k}))$, and $c_{3}(j, k) \coloneqq c_{3}(\delta, R(j\tau_{k}))$. 

Given the claim for granted, for every $k \in \mathbb{N}$ let us denote with~$i_{k} \in \mathbb{N}$ the first index for which at least one of the two inequalities~\eqref{e:160} or \eqref{e:162} is violated. For simplicity, let us assume that it is always~\eqref{e:160} to be violated in~$i_{k}$. Hence,
\begin{displaymath}
 \sum_{j =1}^{i_{k} - 1} \frac{2 \, L(j-1, k)}{m_{1}^{2} (j-1, k)} \, \tau_{k} \geq \eta - \delta - \left( \frac{2 \, C(i_{k} - 1, k) }{m_{1}^{2}(i_{k}-1, k)}  \right) ^{1/(2 - \overline{\alpha})}  \sqrt{\tau_{k}} \,.
\end{displaymath}
Since $L_{E, \delta, R}$ is increasing with respect to~$R$,~$m_{1}(R)$ is decreasing with respect to~$R$, and~$R(t)$ determined in Lemma~\ref{l:Gronwall3} is increasing with respect to~$t$, we also have that
\begin{displaymath}
\frac{2 \, L(i_{k}-1, k)}{m_{1}^{2} (i_{k}-1, k)} \, (i_{k} - 1) \tau_{k} \geq \eta - \delta - \left( \frac{2 \, C(i_{k} - 1, k) }{m_{1}^{2}(i_{k}-1, k)}  \right) ^{1/(2 - \overline{\alpha})}   \sqrt{\tau_{k}}\,,
\end{displaymath}
from which we deduce that $i_{k} \tau_{k}$ is bounded away from $0$. Therefore, there exists $T_{f}>0$ such that $T_{f} < (i_{k} - 1) \tau_{k}$ for every $k$ large enough. A similar estimate can be obtained if~\eqref{e:162} is violated, and we conclude that there exists~$T_{f}>0$ such that $(i)$ and~$(ii)$ hold.

Let us prove the claim. For fixed~$i \in \mathbb{N}$, assume that \eqref{e:160}--\eqref{e:162} hold and that $\lambda^{k}(t^{k}_{j}, x_{0}, \lambda_{0}) \in \Lambda^{\delta}_{n}$ for $0 \leq j<i$. Then, by~$(ii)$ of Proposition~\ref{p:Markov_euler} we have that for every $1 \leq j < i$
\begin{equation}\label{e:163}
| \lambda^{k} (t^{k}_{j} , x_{0}, \lambda_{0}) - \lambda^{k} (t^{k}_{j-1}, x_{0}, \lambda_{0}) | \leq \frac{2 \, L(j-1, k)}{m_{1}^{2} (j-1, k)} \, \tau_{k} \,.
\end{equation} 
By~$(i)$ of Proposition~\ref{p:Markov_euler} we have, since $\tau_{k} \leq 1$,
\begin{equation}\label{e:164}
| \lambda^{k} (t^{k}_{i} , x_{0}, \lambda_{0}) - \lambda^{k} (t^{k}_{i-1}, x_{0}, \lambda_{0}) | \leq \left( \frac{2 \, C(i-1, k) }{m_{1}^{2}(i-1, k)}  \right) ^{1/(2 - \overline{\alpha})}  \sqrt{\tau_{k}} \,.
\end{equation}
Hence, by~\eqref{e:160},~\eqref{e:163},~\eqref{e:164}, and by triangle inequality, we deduce that
\begin{displaymath}
\begin{split}
| \lambda^{k} (t^{k}_{i} , x_{0}, \lambda_{0}) - \lambda_{0} | & \leq \sum_{j=1}^{i} | \lambda^{k} (t^{k}_{j} , x_{0}, \lambda_{0}) - \lambda^{k} (t^{k}_{j-1}, x_{0}, \lambda_{0}) | 
\\
&
\leq \sum_{j = i}^{i-1} \frac{2 \, L(j-1, k)}{m_{1}^{2} (j-1, k)} \, \tau_{k} + \left( \frac{2 \, C(i-1, k) }{m_{1}^{2}(i-1, k)}  \right) ^{1/(2 - \overline{\alpha})}  \sqrt{\tau_{k}} < \eta - \delta \,,
\end{split}
\end{displaymath}
which implies that $\lambda^{k} (t^{k}_{i}, x_{0}, \lambda_{0}) \in \Lambda^{\delta}_{n}$ as $\lambda_{0} \in \Lambda^{\eta}_{n}$. Since~\eqref{e:160} is independent of the particular choice of the initial condition~$(x_{0}, \lambda_{0}) \in \spt\, \widehat{\Psi} \subseteq \B^{Y}_{r, \eta}$, we infer that $\Psi^{k}(t), \underline{\Psi}^{k} (t), \widetilde{\Psi}^{k} (t) \in \Pp(\B^{Y}_{R(t^{k}_{i}), \delta})$ for every $t \in [0, t^{k}_{i}]$.

Let us now denote by~$\mu^{k}_{i} \in \overline{\Lambda}_{n}$ the solution to the minimum problem
\begin{displaymath}
\min_{\rho \in \overline{\Lambda}_{n}}\, \bigg\{ E \left (x^{k} (t^{k}_{i-1}, x_{0}, \lambda_{0}), \rho, \Psi^{k}_{i-1} \right) + \frac{1}{2 \tau_{k}} \| \rho - \lambda^{k} (t^{k}_{i-1}, x_{0}, \lambda_{0}) \|^{2}_{G(x^{k} (t^{k}_{i-1}, x_{0}, \lambda_{0}) , \lambda^{k}(t^{k}_{i}, x_{0}, \lambda_{0}) , \Psi^{k}_{i-1})} \bigg\} \,.
\end{displaymath}
Then, by~$(iii)$ of Proposition~\ref{p:Markov_euler} we get that
\begin{displaymath}
| \mu^{k}_{i} - \lambda^{k} ( t^{k}_{i-1} , x_{0}, \lambda_{0}) | \leq \left( \frac{2 \, C(i-1, k)}{m^{2}_{1}(i-1, k)}  \right) ^{1/(2 - \overline{\alpha})}   \sqrt{\tau_{k}} \,.
\end{displaymath}
Therefore, by triangle inequality and by~\eqref{e:160} we obtain
\begin{displaymath}
\begin{split}
| \mu^{k}_{i} - \lambda_{0} | & \leq \sum_{j=1}^{i-1} | \lambda^{k} (t^{k}_{j}, x_{0}, \lambda_{0} ) - \lambda^{k}( t^{k}_{j-1}, x_{0}, \lambda_{0}) | + | \mu^{k}_{i} - \lambda^{k} (t^{k}_{i-1}, x_{0}, \lambda_{0}) |
\\
&
\leq \sum_{j = i}^{i-1}\frac{2 \, L(j-1, k)}{m_{1}^{2} (j-1, k)} \, \tau_{k} + \left( \frac{2 \, C(i-1, k)}{m_{1}^{2} (i-1, k)}  \right) ^{1/(2 - \overline{\alpha})}  \sqrt{\tau_{k}} < \eta - \delta \,,
\end{split}
\end{displaymath}
which yields $\mu^{k}_{i} \in \Lambda^{\delta}_{n}$.

Since $\lambda^{k} (t^{k}_{j}, x_{0}, \lambda_{0} ) \in \Lambda^{\delta}_{n}$ for~$0\leq j \leq i$, by~$(ii)$ of Proposition~\ref{p:Markov_euler}, by~\eqref{e:162}, and by~\eqref{e:163} we have that
\begin{displaymath}
\begin{split}
\sum_{j=1}^{i-1}  | \lambda^{k} (t^{k}_{j}, x_{0}, \lambda_{0})  & - \lambda^{k} (t^{k}_{j-1}, x_{0}, \lambda_{0}) | +  \left(\frac{c_{3} (i-1, k)}{c_{1}(i-1, k) }\right)^{1/2} | \lambda^{k} (t^{k}_{i}, x_{0}, \lambda_{0} ) - \lambda^{k}( t^{k}_{i-1}, x_{0}, \lambda_{0}) | \\
&
\leq  \sum_{j = i}^{i-1} \frac{2 \, L(j-1, k)}{m_{1}^{2} (j-1, k)} \, \tau_{k} + \frac{2 \, C(i-1, k)}{m_{1}^{2}(i-1, k)}  \left(\frac{c_{3} (i-1, k)}{c_{1}(i-1, k) }\right)^{1/2} \tau_{k} < \eta - \delta \,,
\end{split}
\end{displaymath}
which in turn implies
\begin{equation} \label{e:165}
\begin{split}
\left(\frac{c_{3} (i-1, k)}{c_{1}(i-1, k) }\right)^{1/2} & | \lambda^{k} (t^{k}_{i}, x_{0}, \lambda_{0} ) - \lambda^{k}( t^{k}_{i-1}, x_{0}, \lambda_{0}) | 
\\
&
< \min\, \Big \{ \dist \big( \lambda^{k} (t^{k}_{i}, x_{0}, \lambda_{0} ), \partial \Lambda^{\delta}_{n} \big)  , \dist \big ( \lambda^{k} (t^{k}_{i-1}, x_{0}, \lambda_{0} ), \partial \Lambda^{\delta}_{n} \big) \Big \} \,.
\end{split}
\end{equation}
Since all the estimates above are independent of the particular choice of~$(x_{0}, \lambda_{0}) \in \spt\, \widehat{\Psi}$ and since, for $t \in [t^{k}_{i-1}, t^{k}_{i})$, the measure~$\underline{\Psi}^{k}(t)$ has support
\begin{displaymath}
\spt \, \underline{\Psi}^{k}(t) \subseteq \Big\{ \big(x^{k}(t^{k}_{i-1}, x_{0}, \lambda_{0}) , \lambda^{k}( t^{k}_{i-1}, x_{0}, \lambda_{0}) \big) : \, (x_{0}, \lambda_{0}) \in \spt \, \widehat{\Psi} \Big\} \subseteq \B^{Y}_{R (t^{k}_{i-1} ) , \delta}\,,
\end{displaymath}
 we deduce that~$(ii)$ holds.
\end{proof}

We are now in a position to prove the short-time convergence of the alternate Lagrangian scheme for reversible Markov chains. We start by showing the equivalent of Propositions~\ref{p:discrete_equation} and~\ref{p:discrete_equation2}.

\begin{proposition}\label{p:Markov_discrete}
Let $r>0$, $\eta > \delta > 0$, $\widehat{\Psi} \in \Pp (\B^{Y}_{r, \eta})$, and let~$T_{f}>0$ be as in Lemma~\ref{l:time}. Then, there exists~$C > 0$ such that for every $\varphi \in C_{b}^{1}( \R^{d} \times \overline{\Lambda}_{n})$, every~$k \in \mathbb{N}$ large enough, every $i \in \mathbb{N}$ such that $(i + 1) \tau_{k} <T_{f}$, and every $t \in (t^{k}_{i}, t^{k}_{i+1})$,
\begin{equation}\label{e:approx_equation3}
\frac{\di}{\di t} \int_{Y} \varphi (x, \lambda) \, \di \Psi^{k}(t)(x, \lambda) = \int_{Y} \nabla \varphi (x, \lambda) \cdot b_{\Psi^{k}(t)} (x, \lambda) \, \di \Psi^{k}(t) (x, \lambda) + \vartheta_{k}(\varphi) \,,
\end{equation}
where $|\vartheta_{k}(\varphi)|  \leq C \| \varphi \|_{C^{1}_{b}} \tau^{1/4}_{k}$.
\end{proposition}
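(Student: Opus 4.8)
The plan is to follow the proofs of Propositions~\ref{p:discrete_equation} and~\ref{p:discrete_equation2} line by line, the only genuinely new inputs being the approximate Euler--Lagrange identity~\eqref{e:142} for the minimizing movement~\eqref{e:Markov_step} and the geometric control of Lemma~\ref{l:time}, which restricts the analysis to $[0,T_{f})$. Fix $\varphi \in C^{1}_{b}(\R^{d}\times\overline{\Lambda}_{n})$, $k$ large, $i\in\mathbb{N}$ with $(i+1)\tau_{k}<T_{f}$, and $t\in(t^{k}_{i},t^{k}_{i+1})$. Differentiating $\int_{Y}\varphi\,\di\Psi^{k}(t)=\int_{Y}\varphi\bigl(X^{k}_{i+1}(t,\cdot,\cdot),\Lambda^{k}_{i+1}(t,\cdot,\cdot)\bigr)\,\di\Psi^{k}_{i}$ exactly as in~\eqref{e:121} and using $\dot\Lambda^{k}_{i+1}(t,x,\lambda)=(\Lambda^{k}_{i+1}(t^{k}_{i+1},x,\lambda)-\lambda)/\tau_{k}$ together with Remark~\ref{r:QT} (so that $\T_{\Psi}(x,\lambda)=\Qq(x,\Psi)\lambda$), one is reduced to bounding, for $(x,\lambda)\in\spt\Psi^{k}_{i}$,
\[
I_{1}(x,\lambda)\coloneqq\bigl|v_{\widetilde{\Psi}^{k}(t)}(x,\Lambda^{k}_{i+1}(t^{k}_{i+1},x,\lambda))-v_{\Psi^{k}(t)}(X^{k}_{i+1}(t,x,\lambda),\Lambda^{k}_{i+1}(t,x,\lambda))\bigr|
\]
and $I_{2}(x,\lambda)\coloneqq\bigl|\tfrac{1}{\tau_{k}}(\Lambda^{k}_{i+1}(t^{k}_{i+1},x,\lambda)-\lambda)-\Qq(X^{k}_{i+1}(t,x,\lambda),\Psi^{k}(t))\Lambda^{k}_{i+1}(t,x,\lambda)\bigr|$. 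By Lemma~\ref{l:time} we have $\underline{\Psi}^{k}(t),\widetilde{\Psi}^{k}(t),\Psi^{k}(t)\in\Pp(\B^{Y}_{R(T_{f}),\delta})$, so all the labels above lie in $\Lambda^{\delta}_{n}$; in particular~\eqref{e:140.2} of Proposition~\ref{p:Markov_euler} gives $|\Lambda^{k}_{i+1}(t^{k}_{i+1},x,\lambda)-\lambda|\le C\tau_{k}$, hence also $|X^{k}_{i+1}(t,x,\lambda)-x|\le C\tau_{k}$ by~\eqref{e:ODE2} and $(v_{3})$.

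The bound $I_{1}(x,\lambda)\le C\tau_{k}$ is obtained verbatim as in~\eqref{e:122}--\eqref{e:125}: split $I_{1}\le I_{1,1}+I_{1,2}$, estimate $I_{1,1}$ by $(v_{1})$ using the Lipschitz-in-time control of $X^{k}_{i+1}$ and $\Lambda^{k}_{i+1}$ (the latter $O(\tau_{k})$ by the previous step) and $(v_{3})$ for the velocity, and estimate $I_{1,2}$ by $(v_{2})$ and $W_{1}(\widetilde{\Psi}^{k}(t),\Psi^{k}(t))\le C\tau_{k}$, the latter obtained by transporting through the flow maps and using $(v_{3})$ and the $O(\tau_{k})$ bound on the label increment; Lemma~\ref{l:Gronwall3} supplies the uniform radius $R(T_{f})$ everywhere.

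The core is $I_{2}$, which we split as $I_{2}\le I_{2,1}+I_{2,2}$ with $I_{2,1}(x,\lambda)\coloneqq\bigl|\tfrac{1}{\tau_{k}}(\Lambda^{k}_{i+1}(t^{k}_{i+1},x,\lambda)-\lambda)-\Qq(x,\Psi^{k}_{i})\Lambda^{k}_{i+1}(t^{k}_{i+1},x,\lambda)\bigr|$ and $I_{2,2}(x,\lambda)\coloneqq\bigl|\Qq(x,\Psi^{k}_{i})\Lambda^{k}_{i+1}(t^{k}_{i+1},x,\lambda)-\Qq(X^{k}_{i+1}(t,x,\lambda),\Psi^{k}(t))\Lambda^{k}_{i+1}(t,x,\lambda)\bigr|$. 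For $I_{2,1}$, note that $\tilde{\lambda}\coloneqq\Lambda^{k}_{i+1}(t^{k}_{i+1},x,\lambda)$ is by construction a solution of~\eqref{e:140} with data $x$, $\lambda$, $\Psi=\Psi^{k}_{i}$; moreover $\lambda,\tilde{\lambda}\in\Lambda^{\delta}_{n}$ and, by Lemma~\ref{l:time}$(ii)$, the pair $\lambda,\tilde{\lambda}$ satisfies~\eqref{e:134}. Hence~\eqref{e:142} of Proposition~\ref{p:Markov_euler} applies and yields $I_{2,1}(x,\lambda)\le C\tau_{k}^{1/4}$. For $I_{2,2}$, writing $\Qq(x,\Psi^{k}_{i})\tilde{\lambda}-\Qq(X^{k}_{i+1}(t,\cdot),\Psi^{k}(t))\Lambda^{k}_{i+1}(t,\cdot)=\bigl(\Qq(x,\Psi^{k}_{i})-\Qq(X^{k}_{i+1}(t,\cdot),\Psi^{k}(t))\bigr)\tilde{\lambda}+\Qq(X^{k}_{i+1}(t,\cdot),\Psi^{k}(t))\bigl(\tilde{\lambda}-\Lambda^{k}_{i+1}(t,\cdot)\bigr)$ and using the local Lipschitz bound $(\Qq_{2})$, the sublinear bound $(\Qq_{3})$, Lemma~\ref{l:Gronwall3}, the $O(\tau_{k})$ estimates on $|X^{k}_{i+1}(t,x,\lambda)-x|$ and $|\Lambda^{k}_{i+1}(t^{k}_{i+1},x,\lambda)-\Lambda^{k}_{i+1}(t,x,\lambda)|$, and $W_{1}(\Psi^{k}_{i},\Psi^{k}(t))\le C\tau_{k}$ (proved as in Proposition~\ref{p:discrete_equation2}), one gets $I_{2,2}(x,\lambda)\le C\tau_{k}$. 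Altogether $I_{2}(x,\lambda)\le C\tau_{k}^{1/4}$ on $\spt\Psi^{k}_{i}$.

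The proof is then concluded as in Proposition~\ref{p:discrete_equation}: starting from~\eqref{e:121}, add and subtract $\int_{Y}\nabla_{x}\varphi\cdot v_{\Psi^{k}(t)}(X^{k}_{i+1}(t,\cdot),\Lambda^{k}_{i+1}(t,\cdot))\,\di\Psi^{k}_{i}$ and $\int_{Y}\nabla_{\lambda}\varphi\cdot\Qq(X^{k}_{i+1}(t,\cdot),\Psi^{k}(t))\Lambda^{k}_{i+1}(t,\cdot)\,\di\Psi^{k}_{i}$; the resulting leading term equals $\int_{Y}\nabla\varphi\cdot b_{\Psi^{k}(t)}\,\di\Psi^{k}(t)$ by the change of variables defining $\Psi^{k}(t)$, while the remainder $\vartheta_{k}(\varphi)$ is controlled by $\|\varphi\|_{C^{1}_{b}}\int_{Y}(I_{1}+I_{2})\,\di\Psi^{k}_{i}\le C\|\varphi\|_{C^{1}_{b}}\tau_{k}^{1/4}$, giving~\eqref{e:approx_equation3}. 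The main obstacle — and the reason the error degrades from $O(\tau_{k})$ to $O(\tau_{k}^{1/4})$ and the statement is only short-time — is the term $I_{2,1}$: unlike the spherical Hellinger case of Proposition~\ref{p:HS_euler}, the Riemannian distance $\sfd_{(x,\Psi)}$ is only implicitly defined, so the Euler--Lagrange control~\eqref{e:142} must be obtained through the auxiliary problem~\eqref{e:145} with the frozen quadratic form $G(x,\tilde{\lambda},\Psi)$, and comparing the two minimizers costs fractional powers of $\tau_{k}$; moreover the argument is valid only while the labels stay uniformly away from $\partial\Lambda_{n}$, which Lemma~\ref{l:time} guarantees merely up to $T_{f}$.
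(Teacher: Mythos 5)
Your proposal matches the paper's argument essentially line by line: the same differentiation of the pushforward as in~\eqref{e:166}, the same splitting into $I_1, I_2$ and then $I_{2,1}, I_{2,2}$, the same use of Lemma~\ref{l:time} to confine labels to $\Lambda_n^\delta$ so that~\eqref{e:140.2} and~\eqref{e:142} of Proposition~\ref{p:Markov_euler} apply, and the same $O(\tau_k^{1/4})$ error traced to $I_{2,1}$. (You also correctly record $I_2 \le C\tau_k^{1/4}$, which is what the paper's~\eqref{e:169}--\eqref{e:170} actually yield, despite a typographical $C\tau_k$ appearing in the paper's~\eqref{e:171}.)
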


\begin{proof}
Along the proof we denote by~$C$ a generic positive constant independent of~$i$,~$k$,~$t$, and~$\varphi$, that may vary from line to line.

We follow step by step the proof of Propositions~\ref{p:discrete_equation} and~\ref{p:discrete_equation2}. Let $i$ and~$k$ be as in the statement of the proposition, and let us set $R \coloneqq R(T_{f})$. For every test function $\varphi \in C_{b}^{1}(\R^{d} \times \Lambda_{n})$ and every $t \in (t^{k}_{i}, t^{k}_{i+1})$, by definition of~$\Psi^{k}(t)$ we have that
\begin{equation}\label{e:166}
\begin{split}
\frac{\di}{\di t} & \int_{Y} \varphi (x, \lambda) \, \di \Psi^{k}(t)(x, \lambda) = \frac{\di}{\di t} \int_{Y} \varphi \big ( X^{k}_{i+1}( t, x, \lambda ) , \Lambda^{k}_{i+1} ( t, x, \lambda) \big) \, \di \Psi^{k}_{i} (x, \lambda) 
\\
&
= \int_{Y} \nabla_{x} \varphi \big ( X^{k}_{i+1}( t, x, \lambda ) , \Lambda^{k}_{i+1} ( t, x, \lambda) \big) \cdot v_{\widetilde{\Psi}^{k} (t)} \big(  x , \Lambda^{k}_{i+1} (t^{k}_{i+1}, x, \lambda) \big) \, \di \Psi^{k}_{i} (x, \lambda)
\\
&
\quad + \int_{Y} \nabla_{\lambda} \varphi  \big ( X^{k}_{i+1}( t, x, \lambda ) , \Lambda^{k}_{i+1} ( t, x, \lambda) \big) \cdot \frac{\big( \Lambda^{k}_{i+1}(t^{k}_{i+1}, x, \lambda) - \lambda \big) }{\tau_{k}}  \, \di \Psi^{k}_{i} (x, \lambda)\,.
\end{split}
\end{equation}

 In order to deduce~\eqref{e:approx_equation3} from~\eqref{e:166} we estimate
\begin{eqnarray*}
&& \displaystyle I_{1} (x, \lambda) \coloneqq \Big| v_{\widetilde{\Psi}^{k}(t)} \big(  x , \Lambda^{k}_{i+1} (t^{k}_{i+1}, x, \lambda) \big) - v_{\Psi^{k}(t)} \big(  X^{k}_{i+1} (t, x, \lambda), \Lambda^{k}_{i+1} (t , x, \lambda) \big) \Big|\,, \\[2mm]
&& \displaystyle I_{2} (x, \lambda) \coloneqq \bigg| \frac{\big( \Lambda^{k}_{i+1}(t^{k}_{i+1}, x, \lambda) - \lambda \big) }{\tau_{k}} - \Qq \big( X^{k}_{i+1} (t, x, \lambda) , \Psi^{k}(t) \big) \Lambda^{k}_{i+1}(t, x, \lambda)  \bigg | 
\end{eqnarray*}
for $(x, \lambda) \in \spt  \Psi^{k}_{i} \subseteq \B^{Y}_{R, \delta}$, the last inclusion being a consequence of Lemma~\ref{l:time}.

Arguing as in the proof of~\eqref{e:122}--\eqref{e:124} and using~\eqref{e:140.2} we get that
\begin{align}
I_{1}  & \leq  L_{v, R} \int_{t^{k}_{i}}^{t^{k}_{i+1}} \bigg ( M_{v}  \Big( 1 + |x| + | \Lambda^{k}_{i+1}(t^{k}_{i+1}, x, \lambda) | + m_{1} ( \widetilde{\Psi}^{k} (\tau)) \Big) + \bigg | \frac{\Lambda^{k}_{i+1} ( t^{k}_{i+1}, x, \lambda) - \lambda}{\tau_{k}} \bigg  | \bigg) \di \tau \nonumber
\\
&
\quad + L_{v, R} \, \tau_{k}  \int_{Y}  \bigg( \big | v_{\widetilde{\Psi}^{k}(\tau)} ( x', \Lambda^{k}_{i+1} ( t^{k}_{i+1}, x' , \lambda' ) )  \big | + \bigg | \frac{\Lambda^{k}_{i+1} ( t^{k}_{i+1}, x', \lambda' ) - \lambda' }{\tau_{k}} \bigg  | \bigg)  \di \Psi^{k}_{i}( x' , \lambda' ) \label{e:167}
\\
&
\vphantom{\int_{Y}} \leq 4\,L_{v, R} \, M_{v} ( 1 + R) \tau_{k} + \frac{4 \, L_{E, \delta, R}}{m_{1}^{2}} \tau_{k} = C \, \tau_{k}\,. \nonumber
\end{align}

Let us now estimate~$I_{2}$. By triangle inequality we have
\begin{equation}
\begin{split}
I_{2}  & \leq    \bigg | \frac{\big( \Lambda^{k}_{i+1}(t^{k}_{i+1}, x, \lambda) - \lambda \big) }{\tau_{k}} - \Qq \big( x , \Psi^{k}_{i} \big)  \Lambda^{k}_{i+1}(t^{k}_{i+1} , x, \lambda)  \bigg | 
\\
&
 \qquad \vphantom{\int} + \big | \Qq \big( X^{k}_{i+1} (t, x, \lambda) , \Psi^{k}(t) \big) \Lambda^{k}_{i+1}(t^{k}_{i+1}, x, \lambda) - \Qq \big( x , \Psi^{k}_{i} \big)  \Lambda^{k}_{i+1}(t, x, \lambda)   \big |  \label{e:168}
 =: I_{2, 1} + I_{2, 2} \,.
 \end{split}
\end{equation}

By~$(iii)$ of Proposition~\ref{p:Markov_euler} and by Lemma~\ref{l:time} we have that
\begin{equation}\label{e:169}
I_{2,1}  \leq C  \,\tau_{k}^{1/4} \,.
\end{equation}
 By~$(\Qq_{2})$,~$(v_{3})$, Lemmas~\ref{l:Gronwall3} and~\ref{l:time}, and~$(ii)$ of Proposition~\ref{p:Markov_euler}, we get
\begin{equation}\label{e:170}
\begin{split}
I_{2,2} & \leq L_{\Qq, R} \bigg( \int_{t^{k}_{i}}^{t} \big| v_{\widetilde{\Psi}^{k}(\tau)} \big( x, \Lambda^{k}_{i+1} ( t^{k}_{i+1}, x, \lambda ) \big) \big| \, \di \tau + \int_{t}^{t^{k}_{i+1}}  \bigg | \frac{\Lambda^{k}_{i+1} ( t^{k}_{i+1}, x, \lambda) - \lambda}{\tau_{k}} \bigg  | \di \tau \bigg)
\\
&
\vphantom{\int} \leq 2 L_{\Qq, R}\, M_{v} (1 + R)\,  \tau_{k} + \frac{2 \, L_{E, \delta, R}}{m_{1}^{2}} \, \tau_{k}  = C\, \tau_{k}\,.
\end{split}
\end{equation}
 Combining~\eqref{e:169} and \eqref{e:170} we obtain that
\begin{equation}\label{e:171}
I_{2} \leq C \, \tau_{k}\,.
\end{equation}
Finally, equality~\eqref{e:approx_equation3} follows from~\eqref{e:167} and~\eqref{e:171} as in the proof of Propositions~\ref{p:discrete_equation} and~\ref{p:discrete_equation2}.
\end{proof}

We finally conclude with the main result of this section.

\begin{theorem}\label{t:Markov}
Let $r>0$, $\eta > \delta > 0$, and $\widehat{\Psi} \in \Pp(\B^{Y}_{r, \eta})$. Then, there exists $T_{f}>0$ such that the sequence of curves~$\Psi^{k} \colon [0,T_{f}] \to \Pp_{1}(Y)$ converges to the unique solution~$\Psi \in C([0,T_f]; \Pp_{1}(Y))$ of~\eqref{e:cont_eq} in~$W_{1}$, uniformly with respect to~$t \in [0,T_f]$ 
\end{theorem}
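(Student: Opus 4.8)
The plan is to reproduce, almost verbatim, the argument of Theorem~\ref{t:limit}, now with Proposition~\ref{p:Markov_discrete} playing the role of Propositions~\ref{p:discrete_equation}--\ref{p:discrete_equation2}, and with Lemma~\ref{l:time} guaranteeing that, for times below the threshold $T_{f}$, the whole construction stays confined to a region $\B^{Y}_{R(T_{f}),\delta}$ on which the estimates of Lemmas~\ref{l:estimate}--\ref{l:distance} and of Proposition~\ref{p:Markov_euler} are available. Existence and uniqueness of the solution $\Psi\in C([0,T_{f}];(\Pp_{1}(Y),W_{1}))$ of~\eqref{e:cont_eq} follow at once from Remark~\ref{r:QT}: since $\T_{\Psi}(x,\lambda)=\Qq(x,\Psi)\lambda$ satisfies $(\T_{0})$--$(\T_{3})$, \cite[Theorem~3.5]{MorSol19} applies on $[0,T_{f}]$.

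Concretely, I would fix the data as in Lemma~\ref{l:time}, obtain the corresponding $T_{f}>0$, set $R\coloneqq R(T_{f})$, and record that $\Psi^{k}(t),\underline{\Psi}^{k}(t),\widetilde{\Psi}^{k}(t)\in\Pp(\B^{Y}_{R,\delta})$ for every $k$ large and every $t<T_{f}$, together with condition~\eqref{e:134} along the trajectories. Integrating in time the identity of Proposition~\ref{p:Markov_discrete} against a test function $\phi\in C^{1}_{b}([0,T_{f}]\times\overline{Y})$ gives, for $t<T_{f}$,
\begin{equation*}
\int_{Y}\phi(t,\cdot)\,\di\Psi^{k}(t)-\int_{Y}\phi(0,\cdot)\,\di\widehat{\Psi}=\int_{0}^{t}\!\!\int_{Y}\partial_{t}\phi\,\di\Psi^{k}(\tau)\,\di\tau+\int_{0}^{t}\!\!\int_{Y}\nabla\phi\cdot b_{\Psi^{k}(\tau)}\,\di\Psi^{k}(\tau)\,\di\tau+\int_{0}^{t}\theta_{k}(\phi(\tau,\cdot))\,\di\tau,
\end{equation*}
with $|\theta_{k}(\phi(\tau,\cdot))|\le C\tau_{k}^{1/4}\|\phi\|_{C^{1}_{b}}$; note that here the rate is only $\tau_{k}^{1/4}$ instead of $\tau_{k}$, but it still vanishes. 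Then I would show that $\Psi^{k}$ is equi-Lipschitz in $W_{1}$ on $[0,T_{f}]$: by the triangle inequality it suffices to treat $s\le t\in[t^{k}_{i},t^{k}_{i+1}]$, and there, exactly as in the proofs of Theorems~\ref{t:limit}--\ref{t:limit2}, one bounds $W_{1}(\Psi^{k}(t),\Psi^{k}(s))$ by $\int_{Y}\bigl(\int_{s}^{t}|v_{\widetilde{\Psi}^{k}(\tau)}(x,\Lambda^{k}_{i+1}(t^{k}_{i+1},x,\lambda))|\,\di\tau+\int_{s}^{t}\|(\Lambda^{k}_{i+1}(t^{k}_{i+1},x,\lambda)-\lambda)/\tau_{k}\|_{\mathrm{BL}}\,\di\tau\bigr)\,\di\Psi^{k}_{i}$, and then invokes $(v_{3})$, Lemma~\ref{l:Gronwall3}, and the bound $|\Lambda^{k}_{i+1}(t^{k}_{i+1},x,\lambda)-\lambda|\le 2L_{E,\delta,R}m_{1}^{-2}\tau_{k}$ from $(ii)$ of Proposition~\ref{p:Markov_euler} to obtain $W_{1}(\Psi^{k}(t),\Psi^{k}(s))\le L|t-s|$ with $L$ independent of $k$. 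Ascoli--Arzel\`a then yields $\overline{\Psi}\in C([0,T_{f}];(\Pp_{1}(Y),W_{1}))$ with $W_{1}(\Psi^{k}(t),\overline{\Psi}(t))\to0$ uniformly along a subsequence, and $\overline{\Psi}(0)=\widehat{\Psi}$, $\overline{\Psi}(t)\in\Pp(\B^{Y}_{R,\delta})$.

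Passing to the limit in the integrated identity then goes as in Theorem~\ref{t:limit}: the boundary and the $\partial_{t}\phi$ terms converge by narrow convergence; the error term is $\le CT_{f}\tau_{k}^{1/4}\|\phi\|_{C^{1}_{b}}\to0$; and the drift term I would split as in~\eqref{e:12}, controlling $\|b_{\Psi^{k}(\tau)}-b_{\overline{\Psi}(\tau)}\|_{\overline{Y}}$ by $(v_{2})$ and $(\Qq_{2})$ — using $\|\Qq(x,\Psi_{1})\lambda-\Qq(x,\Psi_{2})\lambda\|_{\mathrm{BL}}\le|\Qq(x,\Psi_{1})-\Qq(x,\Psi_{2})|$ since $\lambda\in\Pp(U)$ — which produces a term bounded by $\int_{0}^{t}W_{1}(\Psi^{k}(\tau),\overline{\Psi}(\tau))\,\di\tau\to0$, the remaining term tending to $0$ by narrow convergence, the uniform bound coming from $(v_{3})$--$(\Qq_{3})$, and dominated convergence. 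This identifies $\overline{\Psi}$ with the unique solution $\Psi$ of~\eqref{e:cont_eq} starting from $\widehat{\Psi}$, whence the full sequence converges. I expect the main obstacle not to be this final step — which is a transcription of the earlier proofs — but rather the results feeding into it, namely Lemma~\ref{l:time} (keeping the minimizing-movement iterates inside $\Lambda^{\delta}_{n}$ for a time $T_{f}$ uniform in $k$) and Proposition~\ref{p:Markov_euler} (controlling the deviation $\|(\tilde{\lambda}-\lambda)/\tau_{k}-\Qq(x,\Psi)\tilde{\lambda}\|$ in spite of the $\Psi$-dependent metric $\sfd_{(x,\Psi)}$); together these two points are precisely what confines the conclusion to short time.
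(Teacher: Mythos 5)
Your proposal is correct and follows essentially the same route as the paper: invoke Lemma~\ref{l:time} to get the short-time confinement to $\B^{Y}_{R(T_f),\delta}$, note via Remark~\ref{r:QT} that $(\T_0)$--$(\T_3)$ hold so that \cite[Theorem~3.5]{MorSol19} gives the unique solution, prove equi-Lipschitzianity in $W_1$ on each $[t^k_i,t^k_{i+1}]$ using $(v_3)$ and Proposition~\ref{p:Markov_euler}$(ii)$, and then transcribe the compactness-and-identification argument of Theorem~\ref{t:limit} with the error term now of order $\tau_k^{1/4}$. The paper's proof is in fact terser — it simply records the equi-Lipschitz estimate and defers the rest to Theorem~\ref{t:limit} — so your write-up is an accurate, slightly more explicit version of the same argument.
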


\begin{proof}
Let $T_{f}>0$ be as in Lemma~\ref{l:time}, so that the curves $\Psi^{k}, \widetilde{\Psi}^{k}$, and~$\underline{\Psi}^{k}$ are well defined in the interval~$[0,T_{f}]$ and~\eqref{e:approx_equation3} holds. Since the operator~$\T_{\Psi}( x, \lambda) \coloneqq \Qq (x, \Psi) \lambda $ satisfies the property~$(\T_{0})$--$(\T_{3})$, we only have to check that the sequence~$\Psi^{k}$ is compact in~$C([0,T_{f}]; \Pp_{1}(Y))$. The rest of the proof works as for Theorem~\ref{t:limit}, with the obvious modifications due to the fact that the rest~$\theta_{k}$ in Proposition~\ref{p:Markov_discrete} is now controlled by~$\tau_{k}^{1/4}$ and not by~$\tau_{k}$.

In view of Lemma~\ref{l:time}, it is enough to show that~$\Psi^{k}$ is equi-Lipschitz with respect to~$W_{1}$. Let us fix~$k \in \mathbb{N}$,~$i \in \mathbb{N}$ such that $i \tau_{k} \leq T_{f}$, and~$s \leq t \in [t^{k}_{i}, t^{k}_{i+1}]$, and let $R \coloneqq R(T_{f})$. Then,
\begin{displaymath}
\begin{split}
W_{1} & ( \Psi^{k}(t) , \Psi^{k}(s))  = \sup_{\eta \in \mathrm{Lip}_{1}(Y)} \bigg\{ \int_{Y} \eta (x, \lambda) \, \di ( \Psi^{k}(t) - \Psi^{k}(s))(x, \lambda) \bigg\}
\\
&
\leq \int_{Y} \Big( \big| X^{k}_{i+1}(t, x, \lambda) - X^{k}_{i+1} (s, x, \lambda) \big| + \big | \Lambda^{k}_{i+1}(t, x, \lambda) - \Lambda^{k}_{i+1}(s, x, \lambda) \big | \Big) \, \di \Psi^{k}_{i} (x, \lambda)
\\
&
\leq \int_{Y} \bigg( \int_{s}^{t} \big| v_{\widetilde{\Psi}^{k}(\tau)} \big(x, \Lambda^{k}_{i+1} (\tau, x, \lambda) \big) \big|\, \di \tau + \int_{s}^{t} \bigg | \frac{\Lambda^{k}_{i+1}( t^{k}_{i+1}, x, \lambda) - \lambda}{\tau_{k}} \bigg | \, \di \tau \bigg)\, \di \Psi^{k}_{i}(x, \lambda) \,.
\end{split}
\end{displaymath}
Therefore, by~$(v_2)$, Lemma~\ref{l:time}, and Proposition~\ref{p:Markov_euler} we get
\begin{displaymath}
W_{1} ( \Psi^{k}(t) , \Psi^{k}(s)) \leq 2M_{v} (1 + R) |t - s| + \frac{2 \, L_{E, \delta, R}}{m_{1}^{2}} \, |t - s| \,,
\end{displaymath}
where the constants~$L_{E, \delta, R}$ and~$m_{1} = m_{1}(R)$ have been determined in Lemmas~\ref{l:estimate} and~\ref{l:distance}, respectively, and are independent of~$k$, $i$, and~$t$.
\end{proof}

\section*{Acknowledgements}
\noindent The authors wish to thank Giuseppe Savar\'e for a useful discussion regarding the content of Section~\ref{s:alternative}.
The work of MM was partially supported by the \emph{Starting grant per giovani ricercatori} of Politecnico di Torino. The work of FS was supported by the project \emph{Variational methods for stationary and evolution problems with singularities and interfaces} PRIN 2017 financed by the Italian Ministry of Education, University, and Research.
MM and FS are members of the Gruppo Nazionale per l'Analisi Matematica, la Probabilit\`a e le loro Applicazioni (GNAMPA)
of the Instituto Nazionale di Alta Matematica ``Francesco Severi'' (INdAM).
\bibliographystyle{siam}
\bibliography{A_M_S_20.bib}

\end{document}